\crefname{figure}{Figure}{Figures} 
\Crefname{figure}{Figure}{Figures}
\newcommand{\B}{\mathcal{B}}
\newcommand{\C}{\mathcal{C}}
\DeclareMathOperator{\D}{\mathcal{D}}
\renewcommand{\S}{\mathcal{S}}
\newcommand{\U}{\mathcal{U}}
\newcommand{\dU}{{}^{\perp_d}{\U}}
\newcommand{\Ud}{\U^{\perp_d}}
\newcommand{\V}{\mathcal{V}}
\newcommand{\Z}{\mathbb{Z}}
\newcommand{\Ext}{\operatorname{Ext}}
\newcommand{\Hom}{\operatorname{Hom}}
\newcommand{\add}{\operatorname{add}}
\newcommand{\proj}{\operatorname{proj}}
\tikzset{
%B&W alternative
%petal/.style={black!40, very thick},
%laminate/.style={black!20},
petal/.style={blue, very thick},
laminate/.style={red},
arc/.style={black, very thick}
}
\newtheorem{thmu}{Theorem}
\newtheorem{theorem}{Theorem}[section]
\newtheorem{lemma}[theorem]{Lemma}
\newtheorem{corollary}[theorem]{Corollary}
\newtheorem{proposition}[theorem]{Proposition}
\theoremstyle{definition}
\newtheorem*{acknowledgements}{Acknowledgements}
\newtheorem{definition}[theorem]{Definition}
\newtheorem{remark}[theorem]{Remark}
\newtheorem{example}[theorem]{Example}
	\newcommand{{\sib}}[1]{\textcolor{blue}{#1}}
\title{The role of gentle algebras in higher homological algebra}
\author[Haugland]{Johanne Haugland}
        \address{Department of Mathematical Sciences\\ 
        NTNU\\ 
        NO-7491 Trondheim\\ 
        Norway}
        \email{johanne.haugland@ntnu.no}
\author[Jacobsen]{Karin M. Jacobsen}
        \address{Department of Mathematics\\ Aarhus Universitet\\ Ny Munkegade 118\\ DK-8000 Aarhus C\\ Denmark}
        \email{karin.jacobsen@ntnu.no}
\author[Schroll]{Sibylle Schroll}
        \address{Department of Mathematics\\ University of Cologne\\ 50931 Cologne \\Germany \\ and  Department of Mathematical Sciences\\ 
        NTNU\\ 
        NO-7491 Trondheim\\ 
        Norway}
        \email{schroll@math.uni-koeln.de}
\begin{document}

\keywords{Gentle algebra, higher homological algebra, $d$-cluster tilting subcategory, $d$-abelian category, $(d+2)$-angulated category}
\subjclass[2010]{18E30, 16E35, 16G10}

\maketitle

\begin{abstract}
    We investigate the role of gentle algebras in higher homological algebra. In the first part of the paper, we show that if the module category of a gentle algebra $\Lambda$ contains a $d$-cluster tilting subcategory for some $d \geq 2$, then $\Lambda$ is a radical square zero Nakayama algebra. This gives a complete classification of weakly $d$-representation finite gentle algebras. In the second part, we use a geometric model of the derived category to prove a similar result in the triangulated setup. More precisely, we show that if $\D^b(\Lambda)$ contains a $d$-cluster tilting subcategory that is closed under $[d]$, then $\Lambda$ is derived equivalent to an algebra of Dynkin type $A$. Furthermore, our approach gives a geometric characterization of all $d$-cluster tilting subcategories of $\D^b(\Lambda)$ that are closed under $[d]$.
\end{abstract}

\section{Introduction}

The research field of higher homological algebra was initiated by Iyama \cites{Iyama2007,Iyama2007_2}. It concerns the study of $d$-abelian and $(d+2)$-angulated categories, as well as further generalizations \cites{Jasso,GKO,HLN}. Distinguished sequences consisting of $d+2$ objects, for a fixed positive integer $d$, play a fundamental role in these structures. In the case $d=1$, one recovers the short exact sequences and distinguished triangles of abelian and triangulated categories, and the theory corresponds to classical homological algebra.

Iyama's work and the axiomatizations of associated categorical structures inspired extensive research activity, and many ideas from classical homological algebra have been shown to have an analogue in the higher setting \cites{J,JJ,JJ20,M,JK,H,R,HJV}. As connections between higher homological algebra and other branches of mathematics have been developed, the importance of the research field has become increasingly evident. Higher homological algebra is intimately related to higher Auslander--Reiten (AR) theory and representation theory of finite dimensional algebras \cites{Iyama2007_2,Iyama2008,Kvamme-Jasso}. It has connections to commutative algebra, commutative and non-commutative algebraic geometry, combinatorics and conformal field theory \cites{AIR,EP,HIMO,IW,OT,W,J18}. The research field has recently seen interesting applications in homological mirror symmetry, through which it relates to symplectic geometry and Fukaya categories \cite{DJL}.

The notion of $d$-cluster tilting subcategories plays a crucial role in higher homological algebra. A $d$-cluster tilting subcategory of an abelian category is $d$-abelian \cite{Jasso}*{Theorem 3.16}, and every $d$-abelian category has been shown to arise in this way \cites{Kvamme,ENI}. Similarly, a $d$-cluster tilting subcategory of a triangulated category carries a $(d+2)$-angulated structure given that it is closed under $d$-suspension \cite{GKO}*{Theorem 1}. We investigate the role of gentle algebras in higher homological algebra by studying the $d$-cluster tilting subcategories both of their module and their derived categories.  
While the questions we answer in this paper are of a higher homological nature, geometric models play a crucial role in our proofs. In particular, it seems difficult to prove our main result without applying the geometric insights offered in \cite{OPS}. Thus, in addition to the new understanding our results provide on the role of gentle algebras in higher homological algebra, this also demonstrates the power of geometric models.

Recall that a finite dimensional algebra is called weakly $d$-representation finite if it has a module that generates a $d$-cluster tilting subcategory. The study of such algebras has played an important role in the development of higher homological algebra as we know it today. From the viewpoint of higher AR-theory, the class of weakly $d$-representation finite algebras can be thought of as a higher analogue of algebras of finite representation type. In particular, the definition coincides with the classical notion of a representation finite algebra in the case $d=1$.

Just as the classification of (hereditary) algebras of finite representation type has been one of the fundamental questions in classical representation theory, the classification of weakly $d$-representation finite algebras is an important question in higher representation theory. In general, this is a difficult problem, but significant progress has been made for particularly nice classes of algebras. Darpö and Iyama characterize weakly $d$-representation finite cyclic Nakayama algebras with homogeneous relations in \cite{DI}*{Theorem 5.1}. The acyclic case was first studied by Jasso \cite{Jasso}*{Proposition 6.2}, and Vaso gives a complete classification in \cite{Vaso}*{Theorem 2}. In the same paper, Vaso also characterizes all $d$-representation finite $d$-hereditary Nakayama algebras \cite{Vaso}*{Theorem 3}. A classification of iterated tilted $d$-representation finite $d$-hereditary algebras in the case $d=2$ is given by Iyama and Oppermann \cite{IO13}*{Theorem 3.12}. Very recently, similar classification results have been obtained in the context of radical square zero algebras \cite{Vaso21}, monomial algebras \cite{ST}, and symmetric algebras \cites{DK}.

A natural question to ask is whether the classification results mentioned above can be extended to more general classes of algebras. Gentle algebras constitute a large class of algebras which naturally extends many of the known examples where a classification has been obtained. In this paper we give a complete classification of weakly $d$-representation finite gentle algebras, as well as $d$-representation finite $d$-hereditary gentle algebras, see \cref{cor:weakly d-RF gentle} and \cref{cor: d-RF gentle}. The main step towards these results is the theorem below, where we show that only very few gentle algebras are weakly $d$-representation finite. More precisely, we prove the following.

\begin{thmu}[see \cref{thm: nakayama}] \label{thm: 1}
Let $\Lambda$ be a gentle algebra. If $\bmod \Lambda$ contains a $d$-cluster tilting subcategory for some $d \geq 2$, then $\Lambda$ is a radical square zero Nakayama algebra.
\end{thmu}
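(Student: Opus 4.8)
The plan is to argue by contradiction, extracting strong homological constraints from the mere existence of a $d$-cluster tilting subcategory $\mathcal{C} \subseteq \bmod\Lambda$ with $d \geq 2$, and then showing that a gentle algebra can satisfy these constraints only if it is radical square zero Nakayama. The two inputs I would rely on throughout are standard consequences of the definition: any $d$-cluster tilting subcategory with $d \geq 2$ contains all projective and all injective $\Lambda$-modules, and it satisfies $\Ext^i_\Lambda(\mathcal{C},\mathcal{C}) = 0$ for $0 < i < d$; in particular, since $d \geq 2$, it is rigid, so $\Ext^1_\Lambda(\mathcal{C},\mathcal{C}) = 0$. Applying this to the forced members of $\mathcal{C}$ yields the necessary conditions $\Ext^i_\Lambda(I, P) = 0$ and $\Ext^i_\Lambda(I, I') = 0$ for all injectives $I, I'$, all projectives $P$, and $0 < i < d$. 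The goal is then to show that these vanishing conditions, combined with the explicit homological combinatorics of gentle algebras, force both that the quiver $Q$ of $\Lambda$ is a disjoint union of linearly oriented $A_n$-quivers and oriented cycles, so that $\Lambda$ is Nakayama, and that $\mathrm{rad}^2\Lambda = 0$.

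For the first step I would rule out branch vertices. Suppose a vertex $v$ has two outgoing arrows $\alpha\colon v \to a$ and $\beta\colon v \to b$; then the indecomposable projective $P_v$ is a non-uniserial (biserial) string module with two arms, whose structure is pinned down precisely by the gentle conditions at $v$. Using the explicit description of homomorphisms and extensions between string modules, together with the known minimal projective resolutions of string modules over a gentle algebra, I would exhibit a nonsplit extension witnessing $\Ext^1_\Lambda(X,Y) \neq 0$ for a suitable pair $X, Y$ drawn from the projectives and injectives associated to $v, a, b$, contradicting rigidity of $\mathcal{C}$. The dual argument, applied to a vertex with two incoming arrows and the corresponding injective, rules out the other type of branching. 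Hence every vertex of $Q$ has in-degree and out-degree at most one, so each connected component is linear or an oriented cycle and $\Lambda$ is Nakayama.

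For the second step, assume $\Lambda$ is Nakayama but $\mathrm{rad}^2\Lambda \neq 0$, so that some path $\beta\alpha$ of length two with $\alpha\colon i \to j$ and $\beta\colon j \to k$ is nonzero. Then $\Lambda$ admits a uniserial module of length at least three, and this is exactly what breaks rigidity: the comparison between $kA_3$ and $kA_3/\mathrm{rad}^2$ is instructive, since in the former the length-three uniserial $\begin{smallmatrix} i \\ j \\ k \end{smallmatrix}$ appears as the middle term of a nonsplit extension witnessing $\Ext^1_\Lambda(I, P) \neq 0$ between an injective and a projective, while in the latter it does not exist. Tracking the minimal projective resolutions of the relevant simple and uniserial modules, I would locate the first degree in which a forbidden $\Ext^i$ with $0 < i < d$ appears and derive a contradiction. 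Combining the two steps gives that $\Lambda$ is a radical square zero Nakayama algebra.

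The main obstacle I anticipate is the bookkeeping in the second step, and more generally controlling the degree of the first nonvanishing $\Ext^i$ across all admissible gentle configurations. The difficulty is twofold: the case analysis must be genuinely exhaustive over the gentle relation patterns at each vertex, and the linear and cyclic cases behave differently homologically, since cyclic Nakayama algebras may have infinite global dimension, forcing one to argue via the stable category or the Gorenstein structure of gentle algebras rather than via projective dimension. It is precisely in keeping these degree computations uniform that the explicit combinatorial control afforded by the string-module framework for gentle algebras is essential.
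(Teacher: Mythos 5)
Your proposal is correct and follows essentially the same route as the paper: the paper likewise uses that a $d$-cluster tilting subcategory with $d \geq 2$ contains all projectives and injectives and is rigid, and then exhibits at each vertex $x$ a non-split overlap extension $0 \to P_x \to E \to I_x \to 0$ between string modules (citing Schr\"oer), which forces every vertex to have at most one incoming and one outgoing arrow with their composition a relation. The only differences are organizational: the paper's single vertex-local analysis handles your two steps (ruling out branching, then ruling out $\mathrm{rad}^2 \Lambda \neq 0$) simultaneously, and since $\Ext^1_\Lambda(I_x,P_x) \neq 0$ already yields the contradiction in every case, the higher-degree $\Ext$ bookkeeping and the infinite-global-dimension issues you anticipate for cyclic Nakayama algebras never actually arise.
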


While the existence of $d$-cluster tilting subcategories of module categories is well-studied for certain classes of algebras, less is known in the triangulated setup. The main aim of this paper is to increase this understanding in the case of derived categories associated to gentle algebras. The class of gentle algebras is special in that not only are these algebras of tame representation type, but they are also derived tame. The indecomposable objects in the bounded derived category of a gentle algebra are classified in \cite{BM}, and a basis of the morphism space between indecomposable objects is described in \cite{ALP}. In \cite{OPS} a geometric model for the derived category of a gentle algebra is given, see also \cites{HKK,LP}. 

Using the geometric model, we characterize $d$-cluster tilting subcategories of the derived category of a gentle algebra that are closed under $d$-suspension. Recall that these subcategories give examples of $(d+2)$-angulated categories. The most important step towards the classification is the following theorem.

\begin{thmu}[see \cref{thm:main result}]
Let $\Lambda$ be a gentle algebra. If $\D^b(\Lambda)$ contains a $d$-cluster tilting subcategory that is closed under $[d]$ for some $d\geq 2$, then $\Lambda$ is derived equivalent to an algebra of Dynkin type $A$.
\end{thmu}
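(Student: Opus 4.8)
The plan is to exploit the geometric model of $\D^b(\Lambda)$ from \cite{OPS}, in which indecomposable objects correspond to (graded) curves on a surface $\S_\Lambda$ with marked points, morphisms correspond to oriented intersections of curves, and the suspension functor $[1]$ acts geometrically as a shift of the grading (equivalently, a prescribed motion of endpoints along the boundary). The key reformulation I would aim for is that a $d$-cluster tilting subcategory $\U$ closed under $[d]$ imposes a rigid combinatorial constraint on the admissible collection of curves: the $d$-rigidity condition $\Hom_{\D^b(\Lambda)}(\U,\U[i])=0$ for $0<i<d$ translates into the absence of certain oriented intersections, while the $\U[d]=\U$ condition says the collection of curves is invariant under the $d$-fold boundary shift. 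First I would set up this dictionary carefully, recording precisely which geometric intersections compute $\Ext^i$ between indecomposables.

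\smallskip

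Next I would analyze what the surface $\S_\Lambda$ must look like. The strategy is to show that the existence of such a $\U$ forces strong restrictions on the topology and the marked-point data of $\S_\Lambda$, ultimately ruling out any surface more complicated than a disc. Concretely, I would argue that any boundary component carrying marked points, or any nonzero genus or puncture, produces families of curves whose intersection patterns cannot simultaneously satisfy the vanishing of $\Ext^i$ for $0<i<d$ and the periodicity forced by $\U=\U[d]$; the $[d]$-closure is what makes the obstruction uniform and lets me leverage the discreteness of the marked points. Since gentle algebras whose derived category is modeled by a disc (with the appropriate dissection) are exactly those derived equivalent to type $A$ — this is the known classification of gentle algebras of type $A$ in terms of their surface — identifying $\S_\Lambda$ as a disc yields the conclusion. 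I would likely phrase the final step as: $\S_\Lambda$ is a disc $\iff$ $\Lambda$ is derived equivalent to a path algebra (or gentle algebra) of Dynkin type $A$, citing the relevant surface-classification of derived equivalence classes.

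\smallskip

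The main obstacle, I expect, is the second step: extracting the topological constraint on $\S_\Lambda$ from the purely homological hypotheses. The difficulty is that $d$-cluster tilting is a global maximality condition — one must control $\Hom(\U,\U[i])$ for \emph{all} pairs of objects, not merely produce one bad intersection — and the geometric translation of "$\U$ is maximal $d$-rigid and functorially finite" is delicate. I anticipate needing a careful case analysis organized by the type of curve (arcs between boundary components versus closed curves, and their winding behavior), together with an argument that the $[d]$-periodicity forces the grading/winding numbers into a narrow range incompatible with higher genus or extra boundary structure. A secondary technical point is handling the graded (as opposed to ungraded) curves correctly, since the grading is exactly what encodes the homological degree and hence the action of $[d]$; getting the bookkeeping of the grading shift under $[d]$ right will be essential for the periodicity argument to bite.
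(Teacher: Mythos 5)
Your overall frame --- translate everything into the geometric model of \cite{OPS}, show the surface must be a disk, then invoke \cite{OPS}*{Corollary 1.23} to identify disk-models with algebras derived equivalent to type $A$ --- is indeed the paper's frame. But the step you defer to ``a careful case analysis organized by the type of curve'' is precisely where the proof lives, and the mechanism you propose for it would not work. Your plan is to find, on any non-disk surface, curves in $\U$ whose mutual intersection pattern is incompatible with $d$-rigidity and $[d]$-periodicity. No such contradiction is available: small $[d]$-closed $d$-rigid collections (for instance the $[d]$-orbit of a single boundary-hugging minimal arc, which has no graded self-intersections) exist on \emph{every} surface, so the conditions imposed on the members of $\U$ among themselves can never be contradictory. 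The contradiction must come from the maximality half of the definition, $\Ud\subseteq\U$, which forces objects \emph{into} $\U$; one then needs an independent property that every object of $\U$ must satisfy and that the forced object violates. The paper's key insight supplying that property is \cref{prop: tau^-1 rigid}: for any finite dimensional algebra, an indecomposable perfect object $X$ in a $d$-cluster tilting subcategory closed under $[d]$ satisfies $\Hom(\tau X,X)=0$; this is proved using that $\tau[1]$ is a partial Serre functor on perfect complexes, converting a nonzero $\Hom(\tau X,X)$ into a nonzero $\Hom(X[d],Y[d-i])$ for some $Y\in\U$, contradicting rigidity. Geometrically this kills band objects (where $\tau X\cong X$) and every finite non-minimal arc (whose AR-triangle has two indecomposable middle summands and a nonzero composite $\tau X\to Y_1\to X$, \cref{lem:exclude-bridging-arcs}), so every perfect indecomposable of $\U$ corresponds to a minimal, boundary-hugging arc (\cref{lem:all minimal}). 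Nothing in your outline plays this role, and without it the ``global maximality'' difficulty you correctly identify remains unresolved.

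With that reduction in hand, the paper's endgame runs in the opposite direction from yours: when the surface is not a disk, one \emph{constructs} a finite non-minimal arc (wind a non-contractible loop $d$ times and concatenate with a minimal arc of $\U$) whose grading is $d$-compatible at its endpoints and which meets no minimal arc in its interior; the corresponding object therefore lies in $\Ud=\U$, contradicting minimality (\cref{thm:nothing in finite glob dim}). Moreover, the case of punctures --- infinite global dimension, where non-perfect objects wrapping punctures could a priori populate $\U$ --- is the longest part of the argument in \cref{thm:main result}: one first shows $\U$ must contain an arc wrapping a puncture at exactly one end, then exploits divisibility by $d$ of the winding number around the puncture to build an offending perfect arc; your outline never separates perfect from non-perfect objects. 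Finally, one bookkeeping error to flag: in the model of \cite{OPS} the suspension $[1]$ does \emph{not} move endpoints along the boundary, it only shifts the grading of the curve; moving endpoints to the next marked points computes $\tau$, not $[1]$. Since your periodicity argument is supposed to be powered by the geometric action of $[d]$, this conflation of suspension with the AR-translate would derail exactly the grading bookkeeping you rightly say is essential.
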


A crucial tool in the proof of this result is \cref{prop: tau^-1 rigid}, where we observe that, for any finite dimensional algebra, an indecomposable perfect object contained in a $d$-cluster tilting subcategory that is closed under $d$-suspension has no non-zero morphisms from its AR-translate to itself. In particular, the middle term in the AR-triangle ending in such an object is indecomposable. This excludes a large class of objects. 

Knowing that the only possible examples arise in the type $A$ case, we classify the $d$-cluster tilting subcategories that are closed under $d$-suspension when our algebra is derived equivalent to an algebra of Dynkin type $A$. This classification is given in \cref{prop:type A}. In particular, this yields a geometric interpretation of the examples coming from $d$-representation finite $d$-hereditary gentle algebras. Combining \cref{thm:main result} and \cref{prop:type A}, we see that all $d$-cluster tilting subcategories of the derived category of a gentle algebra that are closed under $d$-suspension arise in this way.

Altogether, our work reveals a lack of $d$-cluster tilting subcategories arising from gentle algebras, both in the module category and the derived category. Through our characterization results, we see that the examples amount to those one can already obtain by studying Nakayama algebras. This suggests that the role of gentle algebras in higher homological algebra is limited, which is surprising due to the otherwise rich theory of this class of algebras. Our results also show that the situation in the derived setup is even more restrictive than in the module category. In particular, the derived category of a cyclic Nakayama algebra never contains $d$-cluster tilting subcategories closed under $d$-suspension, even though the associated module category may contain $d$-cluster tilting subcategories as described in \cite{DI}.

The paper is structured as follows. In \cref{section:background} we give an overview of some necessary background and preliminaries. This includes the definition of $d$-cluster tilting subcategories and notions related to $d$-representation finiteness, as well as an introduction to the geometric model for the derived category of a gentle algebra. In \cref{section: module category of gentle algebra} we present our results related to the module category of a gentle algebra, before we discuss the derived case in \cref{section: derived category of gentle algebra}.

\subsection{Conventions and notation}
Throughout this paper, let $d$ denote a positive integer. We will typically assume $d \geq 2$. All algebras considered are connected and finite dimensional over an algebraically closed field $K$. The field is assumed to be algebraically closed to be consistent with \cite{ALP}, but as noted in that paper, this condition could be omitted.

Given a quiver $Q$, we denote its set of vertices by $Q_0$ and its set of arrows by $Q_1$. For an arrow $\alpha$ in $Q_1$, we write $s(\alpha)$ for the start vertex of $\alpha$ and $t(\alpha)$ for the end vertex of $\alpha$. Given an arrow $\beta$ with $t(\alpha) = s(\beta)$, we write $\alpha \beta$ for the non-zero product in the path algebra $KQ$. 

We denote the category of finitely generated right modules over an algebra $\Lambda$ by $\bmod \Lambda$. The subcategory of projectives in $\bmod \Lambda$ is denoted $\proj \Lambda$. We use the notation $\D^b (\Lambda)$ for the bounded derived category of $\bmod \Lambda$. The AR-translation (where it exists) is denoted by $\tau$ and the suspension functor in $\D^b(\Lambda)$ by $[1]$.

All subcategories are assumed to be full. Given a set of objects $\S$ in an additive category $\C$, we use the notation $\add \S$ for the subcategory of $\C$ consisting of direct summands of finite direct sums of objects in $\S$.

\section{Background and preliminaries}\label{section:background}

\subsection{\texorpdfstring{$d$}{d}-cluster tilting subcategories} \label{subsec: d-CT}
The notions of $d$-abelian and $(d+2)$-angulated categories were introduced in \cites{GKO, Jasso} to axiomatize properties of $d$-cluster tilting subcategories of abelian and triangulated categories. The definition of such subcategories plays a crucial role in this paper.

Before giving the definition, let us recall what it means for a subcategory $\U$ of some category $\C$ to be functorially finite. Given an object $X$ in $\C$, a morphism $f \colon U \rightarrow X$ with $U$ in $\U$ is a \emph{right $\U$-approximation} of $X$ if any morphism $U' \rightarrow X$ with $U'$ in $\U$ factors through $f$. The subcategory $\U$ is called \emph{contravariantly finite} if every object in $\C$ admits a right $\U$-approximation. The notions of \emph{left $\U$-approximations} and \emph{covariantly finite} subcategories are defined dually. A subcategory is \emph{functorially finite} if it is both covariantly and contravariantly finite.

Given a subcategory $\U$ of some abelian or triangulated category $\C$, we associate the subcategories
\begin{align*}
\Ud&=\{X\in \C \mid \Ext_{\C}^i(\U,X)=0 ~\text{ for }~ 1\leq i \leq d-1\}\\
\dU&=\{X\in \C \mid \Ext_{\C}^i(X,\U)=0 ~\text{ for }~ 1\leq i \leq d-1\}.
\end{align*}
Note that we write $\Ext_{\C}^i(X,Y)=\Hom_{\C}(X,Y[i])$ in the triangulated case. Using this notation, we give the definition of a $d$-cluster tilting subcategory. Recall that in the case where $\C$ is abelian, our subcategory $\U$ is called \emph{generating} (resp.\ \emph{cogenerating}) if for each object $X$ in $\C$ there exists an epimorphism $U \twoheadrightarrow X$ (resp.\ monomorphism $X \rightarrowtail U$) with $U$ in $\U$.

\begin{definition}[see \cites{Iyama,J, KR}] \label{def:d-CT}
A functorially finite subcategory $\U$ of an abelian or triangulated category $\C$ is \emph{$d$-cluster tilting} if it is generating-cogenerating (in the abelian case) and
\[
\U=\Ud=\dU.
\]
\end{definition}

It follows immediately from the definition that any $d$-cluster tilting subcategory necessarily contains all projective and all injective objects. In particular, a $d$-cluster tilting subcategory is automatically generating-cogenerating when the ambient category is a module category.

Following \cite{JK}, a finite dimensional algebra $\Lambda$ is called \emph{weakly $d$-representation finite} if it has a $d$-cluster tilting $\Lambda$-module. This means that there is a $\Lambda$-module $M$ such that $\add(M)$ is a $d$-cluster tilting subcategory of $\bmod \Lambda$. A weakly $d$-representation finite algebra is called \emph{$d$-representation finite $d$-hereditary} if the global dimension of $\Lambda$ is at most $d$. The reader should note that terminology related to higher representation finiteness varies in the literature. For instance, a $d$-representation finite $d$-hereditary algebra is in many papers known simply as \emph{$d$-representation finite}, see for example \cites{IO,HI,HI11,HIO,IO13,HS}. 

The module category of a $d$-representation finite $d$-hereditary algebra $\Lambda$ contains a unique $d$-cluster tilting subcategory $\U \subseteq \bmod \Lambda$ \cite{Iyama}*{Theorem 1.6}. In this situation, the subcategory
\[
  \U[d\Z] =  \add\left \{X[di] \mid X\in\U ~\text{ and }~ i\in\mathbb Z \right\} \subseteq \D^b(\Lambda)
\]
is a $d$-cluster tilting subcategory of the bounded derived category \cite{Iyama}*{Theorem 1.23}. This subcategory is closed under $[d]$, and thus yields an example of a $(d+2)$-angulated category \cite{GKO}.

\begin{example}\label{ex:d-CT}
Consider $\Lambda=KA_3/J^2$, where $A_3$ is the quiver $1 \rightarrow 2 \rightarrow 3$ and $J$ denotes the arrow ideal. The algebra $\Lambda$ is $2$-representation finite $2$-hereditary. \cref{fig:modA3} shows the AR-quiver of $\bmod{\Lambda}$ with the unique $2$-cluster tilting subcategory $\U =\add M$, where $M$ is given by the direct sum of all indecomposable projectives and injectives. The lift $\U[2\Z]$ to $\D^b(\Lambda)$ is shown in \cref{fig:derA3}.

\end{example}
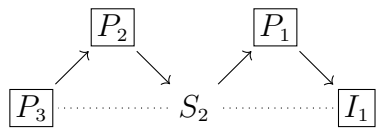
\begin{figure}[thb]
    \centering
    \begin{tikzpicture}[scale=1.07, ->,inner sep=2pt, outer sep=2pt]
    	\node[draw] (P3) at (0,0) {$P_3$};
	    \node[draw] (P2) at (1,1) {$P_2$};
    	\node (S2) at (2,0) {$S_2$};
    	\node[draw] (P1) at (3,1) {$P_1$};
    	\node[draw] (I1) at (4,0) {$I_1$};
    	\draw (P3) -- (P2);
    	\draw (P2) -- (S2);
	    \draw (S2) -- (P1);
	    \draw (P1) -- (I1);
	    \draw[dotted,-] (I1) -- (S2);
	    \draw[dotted,-] (S2) -- (P3);
    \end{tikzpicture}
    \caption{The AR-quiver of $\bmod\Lambda$ with rectangles around the indecomposable objects in the $2$-cluster tilting subcategory $\U$.}
    \label{fig:modA3}
\end{figure}
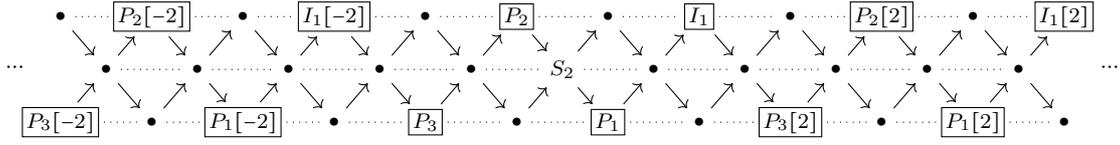
\begin{figure}[bht]
    \centering

\begin{tikzpicture}[font=\tiny, xscale=.6, yscale=.7, inner sep=1.5pt, outer sep=2pt]
		\node at (-1,1) {$\cdots$} ;
		\node at (23,1) {$\cdots$} ;
		
		\node[draw] (000) at (0,0) {$P_3[-2]$};
		\node (020) at (2,0) {$\bullet$};
		\node[draw] (040) at (4,0) {$P_1[-2]$};
		\node (060) at (6,0) {$\bullet$};
		\node[draw] (080) at (8,0) {$P_3$};
		\node (100) at (10,0) {$\bullet$};
		\node[draw] (120) at (12,0) {$P_1$};
		\node (140) at (14,0) {$\bullet$};
		\node[draw] (160) at (16,0) {$P_3[2]$};
		\node (180) at (18,0) {$\bullet$};
		\node[draw] (200) at (20,0) {$P_1[2]$};
		\node (220) at (22,0) {$\bullet$};

		\node (001) at (1,1) {$\bullet$};
		\node (021) at (3,1) {$\bullet$};
		\node (041) at (5,1) {$\bullet$};
		\node (061) at (7,1) {$\bullet$};
		\node (081) at (9,1) {$\bullet$};
		\node (101) at (11,1) {$S_2$};
		\node (121) at (13,1) {$\bullet$};
		\node (141) at (15,1) {$\bullet$};
		\node (161) at (17,1) {$\bullet$};
		\node (181) at (19,1) {$\bullet$};
		\node (201) at (21,1) {$\bullet$};
		
		\node (002) at (0,2) {$\bullet$};
		\node[draw] (022) at (2,2) {$P_2[-2]$};
		\node (042) at (4,2) {$\bullet$};
		\node[draw] (062) at (6,2) {$I_1[-2]$};
		\node (082) at (8,2) {$\bullet$};
		\node[draw] (102) at (10,2) {$P_2$};
		\node (122) at (12,2) {$\bullet$};
		\node[draw] (142) at (14,2) {$I_1$};
		\node (162) at (16,2) {$\bullet$};
		\node[draw] (182) at (18,2) {$P_2[2]$};
		\node (202) at (20,2) {$\bullet$};
		\node[draw] (222) at (22,2) {$I_1[2]$};

		\begin{scope}[->]
			\draw (000)--(001);
			\draw (020)--(021);
			\draw (040)--(041);
			\draw (060)--(061);
			\draw (080)--(081);
			\draw (100)--(101);
			\draw (120)--(121);
			\draw (140)--(141);
			\draw (160)--(161);
			\draw (180)--(181);
			\draw (200)--(201);
			
			\draw (002)--(001);
			\draw (022)--(021);
			\draw (042)--(041);
			\draw (062)--(061);
			\draw (082)--(081);
			\draw (102)--(101);
			\draw (122)--(121);
			\draw (142)--(141);
			\draw (162)--(161);
			\draw (182)--(181);
			\draw (202)--(201);
			
			\draw (001)--(020);
			\draw (021)--(040);
			\draw (041)--(060);
			\draw (061)--(080);
			\draw (081)--(100);
			\draw (101)--(120);
			\draw (121)--(140);
			\draw (141)--(160);
			\draw (161)--(180);
			\draw (181)--(200);
			\draw (201)--(220);
			
			\draw (001)--(022);
			\draw (021)--(042);
			\draw (041)--(062);
			\draw (061)--(082);
			\draw (081)--(102);
			\draw (101)--(122);
			\draw (121)--(142);
			\draw (141)--(162);
			\draw (161)--(182);
			\draw (181)--(202);
			\draw (201)--(222);
		\end{scope}
		
		\begin{scope}[dotted]
			\draw (000)--(020);
			\draw (020)--(040);
			\draw (040)--(060);
			\draw (060)--(080);
			\draw (080)--(100);
			\draw (100)--(120);
			\draw (120)--(140);
			\draw (140)--(160);
			\draw (160)--(180);
			\draw (180)--(200);
			\draw (200)--(220);
			
			\draw (001)--(021);
			\draw (021)--(041);
			\draw (041)--(061);
			\draw (061)--(081);
			\draw (081)--(101);
			\draw (101)--(121);
			\draw (121)--(141);
			\draw (141)--(161);
			\draw (161)--(181);
			\draw (181)--(201);
			
			\draw (002)--(022);
			\draw (022)--(042);
			\draw (042)--(062);
			\draw (062)--(082);
			\draw (082)--(102);
			\draw (102)--(122);
			\draw (122)--(142);
			\draw (142)--(162);
			\draw (162)--(182);
			\draw (182)--(202);
			\draw (202)--(222);
		\end{scope}
	\end{tikzpicture}
    \caption{The AR-quiver of $\D^b (\Lambda)$ with rectangles around the indecomposable objects in the $2$-cluster tilting subcategory $\U[2\Z]$. Note that we identify a $\Lambda$-module with its associated stalk complex concentrated in degree $0$.} 
    \label{fig:derA3}
\end{figure}

\subsection{Gentle algebras}
Gentle algebras constitute a large and well-studied class of algebras. They first appeared as iterated tilted algebras of Dynkin type $A$ \cites{AH} and $\widetilde A$ \cite{AS}, and can be seen as generalizations of algebras of Dynkin type $A$.

\begin{definition}\label{def:gentle}
An algebra of the form $KQ/I$ is called \emph{gentle} if the following conditions hold:
\begin{enumerate}
    \item The quiver $Q$ is finite;
    \item For all $\alpha \in Q_1$, there exists at most one arrow $\beta$ such that $\alpha \beta \notin I$ and at most one arrow $\gamma$ such that $\gamma \alpha \notin I$;
    \item For all $\alpha \in Q_1$, there exists at most one arrow $\beta$ with $t(\alpha) = s(\beta)$ such that $\alpha \beta \in I$ and at most one arrow $\gamma$ with $t(\gamma) = s(\alpha) $ such that $\gamma \alpha \in I$;
    \item The ideal $I$ is admissible and generated by the relations in (3). 
\end{enumerate}
\end{definition}

Note that part (2) and (3) in \cref{def:gentle} imply that there are at most two incoming and at most two outgoing arrows at each vertex in the quiver $Q$. Gentle algebras are special biserial and their indecomposable modules have been classified in terms of string and band combinatorics \cite{BR}. In this paper we only need to work with string modules, so we briefly recall their definition here. 

For every arrow $\alpha$  in $Q_1$ with $s(\alpha) = x$ and $t(\alpha) = y$, we define its formal inverse  $\alpha^{-1}$ by setting $s(\alpha^{-1}) = y$ and $t(\alpha^{-1}) = x$. A \emph{walk} is a sequence $\alpha_{1} \dots \alpha_{r}$ of arrows and inverse arrows such that $t(\alpha_{i}) = s(\alpha_{i + 1})$ and $\alpha_{i + 1} \neq \alpha_{i}^{-1}$ for all $ 1 \leq i \leq  r - 1$. A \emph{string} is a walk $w$ in $Q$ such that no subword of $w$ or of $w^{-1}$ is in $I$. If $w$ is a string, the associated \emph{string module} $M(w)$ is given by the quiver representation obtained by replacing every vertex in $w$  by a copy of $K$ and every arrow by the identity map. 

\subsection{The geometric model} \label{sec: geometric model}
In \cite{OPS}, building on \cite{Schroll15} and \cite{Schroll18}, a geometric model for the derived category of a gentle algebra is given in terms of surface dissections. It is closely related to the partially wrapped Fukaya category of surfaces with stops described in \cite{HKK} and further studied in \cite{LP}. We give a brief introduction to the geometric model, emphasizing aspects that are needed in \cref{section: derived category of gentle algebra}. The reader is referred to \cite{OPS} for more detailed explanations.

The geometric model is based on a bijection between gentle algebras and certain surface dissections as described in \cite{OPS}, see also \cites{BC-S, PPP}. The construction is as follows. Consider a pair $(S,M)$, where $S$ is a compact oriented surface with boundary and $M$ is a set of marked points on the boundary. Let $\Gamma$ be a \emph{dissection} of $(S,M)$ into polygons. That is, the vertices of $\Gamma$ are exactly the marked points in $M$ and the complement of $\Gamma$ in $S$ is a disjoint union of polygons. We call the dissection $\Gamma$ \emph{admissible} if each polygon either has exactly one boundary segment or, if it has no boundary segment, encloses a boundary component with no marked points on it.

Given an admissible dissection $\Gamma$ of $(S,M)$, we describe how to obtain a quiver $Q$. The vertices of $Q$ are in bijection with the edges of $\Gamma$. If two edges $\gamma$ and $\gamma'$ of $\Gamma$ are incident with the same vertex of $\Gamma$ such that $\gamma'$ directly follows $\gamma$ in the orientation of the surface, there is an arrow from $\gamma$ to $\gamma'$ in $Q$. We define an ideal of relations $I$ of $KQ$ as follows. Suppose $\alpha$ and $\beta$ are two composable arrows in $Q$. If the edges of $\Gamma$ corresponding to the vertices $s(\alpha), s(\beta)$ and $t(\beta)$ in $Q$ are incident with the same vertex of $\Gamma$ and directly follow each other in the orientation of $S$, then $\alpha \beta \notin I$. Otherwise, we set $\alpha \beta \in I$. The resulting algebra $\Lambda=KQ/I$ is gentle, and every gentle algebra arises in this way. 

\begin{example}\label{ex:quiver from surface}
Consider the gentle algebra $\Lambda=KA_n/J^2$, where $A_n$ is the linearly oriented quiver of Dynkin type $A$ with $n$ vertices and $J$ is the arrow ideal. This algebra arises from a dissection of the disk with $n+1$ marked points on the boundary, as illustrated in \cref{fig:quiver from surface}. 
\end{example}

\begin{figure}[htb]
    \centering    \begin{tikzpicture}[radius=5cm, scale=.55]
	\coordinate (dotted_start) at (350:5);
	\draw (dotted_start) arc[start angle= 350, delta angle= 270];
	\draw[dashed] (dotted_start) arc[start angle=350, delta angle=-90];
	
	%black points
	\coordinate (m3) at (220:5);
	\coordinate (m2) at (165:5);
	\coordinate (m1) at (110:5);
	\coordinate (mt) at (80:5);
	\coordinate (m-) at (10:5);
	\filldraw (m3) circle[radius=.1];
	\filldraw (m2) circle[radius=.1];
	\filldraw (m1) circle[radius=.1];
	\filldraw (mt) circle[radius=.1];
	\filldraw (m-) circle[radius=.1];
	
	%Black arcs and blue points
	\draw (m2) to[bend right] node[midway] (p1) {$\bullet$} (m1); 
	\draw (m3) to[bend right] node[midway] (p2) {$\bullet$} (m2);
	\draw[dashed] (m3) to[bend left] node[pos=0.6] (p3) {$\bullet$} (0,-4);
	\draw[dashed] (m-) to[bend right] node[pos=0.6] (p-) {$\bullet$} (3,-3);
	\draw (mt) to[bend right] node[midway] (pt) {$\bullet$} (m-);
	
	%blue points
	\filldraw[petal] (p1) circle[radius=.1] node[above left] {$1$};
	\filldraw[petal] (p2) circle[radius=.1] node[left] {$2$};
	\filldraw[petal] (p3) circle[radius=.1];
	\filldraw[petal] (p-) circle[radius=.1];
	\filldraw[petal] (pt) circle[radius=.1] node[above right] {$n$};
	
	%blue arrows
	\draw[petal, ->] (p1) to[bend left] coordinate[pos=0.8] (r1) (p2);
	\draw[petal, ->] (p2) to[bend left] coordinate[pos=0.2] (r2) (p3);
	\draw[petal, ->] (p-) to[bend left] (pt);
	
	%relations
	\draw[dotted, petal] (r1) to[bend left] (r2);
\end{tikzpicture}

    \caption{The geometric model associated to the gentle algebra $KA_n/J^2$. The dissection $\Gamma$ is given by the thinner outer curves, while the quiver $A_n$ is drawn with thicker blue
    arrows. The relations are indicated by the dotted line.}
    \label{fig:quiver from surface}
\end{figure}
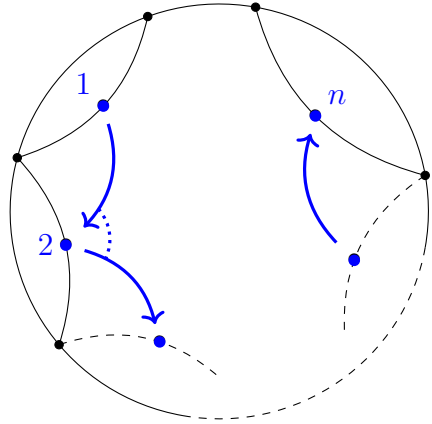

Given a surface $(S,M)$ with admissible dissection $\Gamma$, we define the dual (embedded) graph $L$ of $\Gamma$ as follows. The vertices of $L$ lie on the boundary of $S$ in such a way that on each boundary component, the vertices of $L$ and $\Gamma$ alternate. If a boundary component does not contain any marked points of $M$, then we replace it by a vertex of $L$ and refer to it as a \emph{puncture}. The edges of the dual graph $L$ are in bijection with the edges of $\Gamma$, and for each edge $\gamma$ in $\Gamma$ there exists a unique edge $\ell$ in $L$ crossing $\gamma$ and crossing no other edge of $\Gamma$. The dual graph $L$ gives an admissible dissection of $(S,M)$.

\begin{example}
\label{ex:dual graph}
The dual graph $L$ of the dissection $\Gamma$ given in \cref{ex:quiver from surface} is shown in \cref{fig:dual graph}.
\end{example}

\begin{figure}[hbt]
    \centering
\begin{tikzpicture}[radius=5cm,scale=.55]
	\coordinate (dotted_start) at (350:5);
	\draw (dotted_start) arc[start angle= 350, delta angle= 270];
	\draw[dashed] (dotted_start) arc[start angle=350, delta angle=-90];
	\coordinate (m3) at (210:5);
	\coordinate (m2) at (165:5);
	\coordinate (m1) at (110:5);
	\coordinate (mt) at (80:5);
	\coordinate (m-) at (10:5);
	\filldraw (m3) circle[radius=.1];
	\filldraw (m2) circle[radius=.1];
	\filldraw (m1) circle[radius=.1];
	\filldraw (mt) circle[radius=.1];
	\filldraw (m-) circle[radius=.1];
	
	%Red points
	\coordinate (l3) at (230:5);
	\coordinate (l2) at (192.5:5);
	\coordinate (l1) at (137.5:5);
	\coordinate (lt) at (45:5);
	\coordinate (l-) at (355:5);
	\coordinate (l) at (95:5);
	\filldraw[laminate] (l3) circle[radius=.1];
	\filldraw[laminate] (l2) circle[radius=.1];
	\filldraw[laminate] (l1) circle[radius=.1];
	\filldraw[laminate] (lt) circle[radius=.1];
	\filldraw[laminate] (l-) circle[radius=.1];
	\filldraw[laminate] (l) circle[radius=.1];
	
	%Black arcs
	\draw (m2) to[bend right]  (m1); 
	\draw (m3) to[bend right]  (m2);
	\draw[dashed] (m3) to[bend left] (0,-4);
	\draw[dashed] (m-) to[bend right] (3,-3);
	\draw (mt) to[bend right] (m-);
	
	\draw[laminate] (l) to[bend left] (l1);
	\draw[laminate] (l) to[bend left] (l2);
	\draw[laminate,dashed] (l) to[bend left] (l3);
	\draw[laminate,dashed] (l) to[bend right] (l-);
	\draw[laminate] (l) to[bend right] (lt);
	
	\end{tikzpicture}

    \caption{The dual graph (in red) of an admissible dissection of the disk.}
    \label{fig:dual graph}
\end{figure}
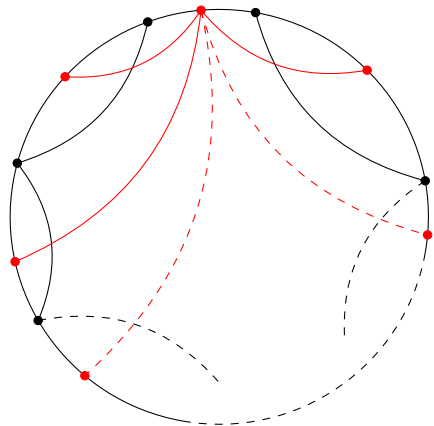

We let the notion \emph{closed curve} refer to a homotopy class of closed curves in $(S,M)$. Similarly, an \emph{arc} is a homotopy class of curves between marked points or wrapping around punctures on one or both ends (following the orientation of $S$). By abuse of notation, we typically let a representative $\gamma$ of an arc also denote the arc itself. If the underlying curve of an arc connects two marked points, we call the arc \emph{finite}. A finite arc is called \emph{minimal} if it is homotopy equivalent to a segment of the boundary with no marked points. In particular, any minimal arc has both endpoints on the same boundary component.

Let $\gamma$ be an arc or a closed curve in $(S, M)$. A \emph{grading} of $\gamma$ is a function $f\colon L \cap \gamma \to \mathbb{Z}$ subject to the condition described below. Let $p_1$ and $p_2$ be two consecutive (in the orientation of $\gamma$) intersection points of $\gamma$ with $L$. The points $p_1$ and $p_2$ correspond to edges $l_1$ and $l_2$ of a unique polygon $P$ of the dissection given by $L$ in such a way that $\gamma$ enters $P$ via $l_1$ and leaves via $l_2$. The polygon $P$ has exactly one boundary segment. The grading $f$ satisfies $f(p_2) = f(p_1) +1$ if this boundary segment lies to the left of $\gamma$ with respect to the orientation of $\gamma$ and $f(p_2) = f(p_1) - 1$ otherwise. A grading $f$ of $\gamma$ is hence determined by its value $f(p)$ at one point $p \in L \cap \gamma$. Note that in the case where $\gamma$ is a closed curve, there need not exist a grading of $\gamma$. In the case where a grading $f$ of $\gamma$ does exist, we refer to the pair $(\gamma, f)$ as a \emph{graded arc} or a \emph{graded closed curve}.

By \cite{BM} the indecomposable objects in $\D^b(\Lambda)$ are in bijection with so-called graded homotopy strings and bands. This bijection allows us to divide the indecomposable objects into \emph{string objects} and \emph{band objects}. The graded homotopy strings are in bijection with graded arcs. Consequently, graded arcs are in bijection with string objects in $\D^b(\Lambda)$. Band objects occur in one parameter families, which are in bijection with graded closed curves. Altogether, this yields a correspondence between indecomposable objects in $\D^b(\Lambda)$ and graded arcs and graded closed curves in $(S,M)$. Given a string object $X$ in $\D^b(\Lambda)$, we use the notation $(\gamma_X,f_X)$ for the corresponding graded arc. When we do not need to describe the grading explicitly, we refer to the graded arc simply by $\gamma_X$. 

\begin{figure}[hbt]
    \centering
    \begin{tikzpicture}[radius=5cm, scale=.75]
	\draw (0,0) circle;
	\coordinate (m4) at (30:5);
	\coordinate (m1) at (150:5);
	\coordinate (m2) at (230:5);
	\coordinate (m3) at (310:5);
	
	%Red points
	\coordinate (lt) at (90:5);
	\coordinate (l1) at (350:5);
	\coordinate (l2) at (270:5);
	\coordinate (l3) at (190:5);
	\filldraw[laminate] (l1) circle[radius=.1];
	\filldraw[laminate] (l2) circle[radius=.1];
	\filldraw[laminate] (l3) circle[radius=.1];
	\filldraw[laminate] (lt) circle[radius=.1];
	\draw[laminate] (lt) to[bend right] node[above right, pos=0.6] {$l_3$} (l1);
	\draw[laminate] (lt) to 			node[right, pos=0.6] {$l_2$} (l2);
	\draw[laminate] (lt) to[bend left]  node[below right, pos=0.6] {$l_1$}(l3);
	
	%Black arcs
	\draw (m1) to[bend left] node[right, pos=0.6]{$\gamma_1$} (m2); 
	\draw (m2) to[bend left] node[above, pos=0.7]{$\gamma_2$}  (m3);
	\draw (m3) to[bend left] node[left, pos=0.4]{$\gamma_3$}  (m4);
	
	\draw[very thick, petal] (m1) to node[above, pos=0.15] {$\gamma_X$}
		node[below left, pos=0.39]{$p_1$} node[below left, pos=0.51]{$p_2$} node[below left, pos=0.63]{$p_3$} (m4);
	
	\filldraw (m1) circle[radius=.1];
	\filldraw (m2) circle[radius=.1];
	\filldraw (m3) circle[radius=.1];
	\filldraw (m4) circle[radius=.1];
	
	\end{tikzpicture}
    \caption{The geometric model of $KA_3/J^2$, with the dual graph $L$ given by the curves $l_1, l_2$ and $l_3$. Objects corresponding to minimal graded arcs are described in \cref{ex:indecomposable object}.}
    \label{fig:indecomposable object}
\end{figure}
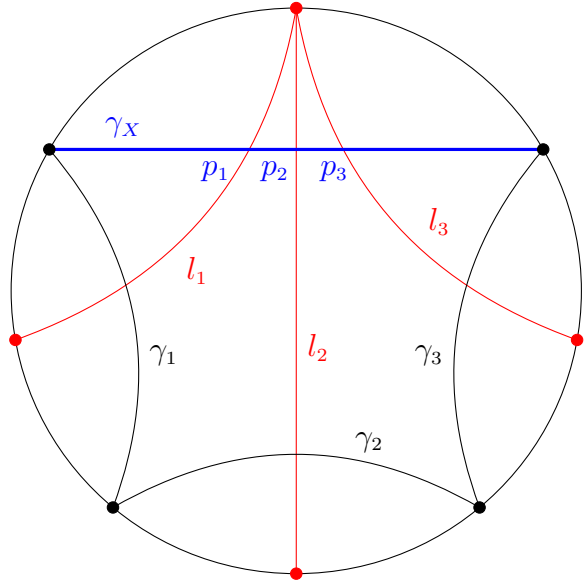

\begin{example}\label{ex:indecomposable object}
We illustrate how graded arcs in the surface of our running example correspond to indecomposable objects in the derived category. In \cref{fig:indecomposable object} we display the geometric model of $\Lambda=KA_3/J^2$, where the dual graph $L$ is drawn in red. Recall that we label the vertices in $A_3$ by $1 \rightarrow 2 \rightarrow 3$. Edges of the dual graph are denoted $l_i$ for $i\in\{1,2,3\}$. The arcs corresponding to indecomposable projectives up to shift are drawn in black, so that the arc $\gamma_i$ corresponding to the stalk complex of the projective in $i$ crosses $l_i$. 

Consider the blue arc $\gamma_X$ which intersects all three edges of $L$. The intersection of $\gamma_X$ and $l_i$ is labeled $p_i$. As the grading of an arc is determined by its grading in one intersection point, we choose $f_X(p_1)=0$, which gives $f_X(p_2)=-1$ and $f_X(p_3)=-2$. Thus, by the description in \cite{OPS}, the indecomposable object $X$ in $\D^b(\Lambda$) corresponding to the graded arc $(\gamma_X,f_X)$ is given by 
$$\cdots \rightarrow 0\rightarrow \overset{-2}{P_3} \rightarrow \overset{-1}{P_2}\rightarrow \overset{0}{P_1}\rightarrow 0 \rightarrow \cdots,$$ 
where the grading is written above the complex. We recognize this as the projective resolution of the simple in $1$, so $X$ is isomorphic in $\D^b(\Lambda)$ to the stalk complex with the simple in $1$ in degree $0$.
\end{example}

In \cref{section: derived category of gentle algebra} we also consider the perfect derived category of a gentle algebra $\Lambda$. This is the full isomorphism closed subcategory $\D^b(\proj\Lambda) \subseteq \D^b(\Lambda)$ consisting of bounded complexes of finitely generated projective $\Lambda$-modules. The indecomposable perfect objects correspond to finite graded arcs and graded closed curves. If $\Lambda$ has finite global dimension, the associated surface has no punctures and the perfect derived category is equivalent to the bounded derived category.

By \cite{Happel}, the perfect derived category of a gentle algebra has AR-triangles. The AR-translate of an indecomposable object $X$ corresponding to a finite graded arc $(\gamma_X,f_X)$ can be computed in terms of the geometric model. More precisely, the AR-translate $\tau X$ corresponds to the arc $\gamma_{\tau X}$ obtained by moving the endpoints of $\gamma_X$ to the next marked points on the boundary (following the orientation of $S$) equipped with a suitable grading.

Given two homotopy classes of graded curves $(\gamma_X, f_X)$ and $(\gamma_Y, f_Y)$ corresponding to indecomposable objects $X$ and $Y$ in $\D^b(\Lambda)$, there is an explicit bijection between the oriented graded intersections of $\gamma_X$ with  $\gamma_Y$ and a basis of  $\Hom_{\D^b(\Lambda)}(X,Y)$. Note that when considering intersections of homotopy classes of curves, we always choose representatives such that the number of intersections is minimal. We give a summary of the definition of an oriented graded intersection using \cref{fig:intersections} and \cref{fig:intersections_puncture}, and refer the reader to \cite{OPS}*{Definition 3.7} for more details.

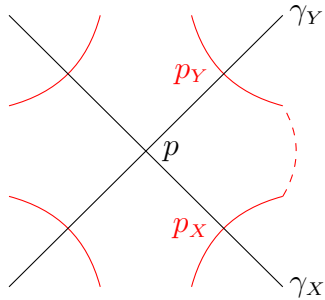
\begin{figure}[bht]
    \pgfmathsetmacro{\dist}{3}
    \pgfmathsetmacro{\nodedisp}{3pt}
    \centering
    \begin{tikzpicture}[scale=1.2, circle, inner sep=1pt]
	    \coordinate (bottomleft) at (0,0);
	    \coordinate (topleft) at (0,\dist);
	    \coordinate (bottomright) at (\dist,0);
	    \coordinate (topright) at (\dist,\dist);
	
	    %Dual graph
	    \path[laminate] (bottomleft) edge[white] +(0,1) +(0,1)  edge [laminate, bend left] +(1,0);
	    \path[laminate] (topleft) edge[white] +(0,-1) +(0,-1)  edge [laminate, bend right] +(1,0);
	    \path[laminate] (bottomright) edge[white] +(0,1) 
					+(0,1)  edge [laminate, bend right] node[left=\nodedisp] {$p_X$} +(-1,0) coordinate (bottomrighttop);
	    \path[laminate]	(topright) edge[white] +(0,-1) +(0,-1)  edge [laminate, bend left] node[left=\nodedisp] {$p_Y$} +(-1,0) coordinate (toprightbottom);
	    \draw[laminate,dashed, bend right] (bottomrighttop) to (toprightbottom);
	
	    %Curves
	    \draw (bottomleft)-- node[right=\nodedisp] {$p$} node[at end, right] {$\gamma_Y$} (topright);
	    \draw (topleft)-- node[at end, right] {$\gamma_X$} (bottomright);
	\end{tikzpicture}
    \caption{Illustration of oriented graded intersection from $(\gamma_X,f_X)$ to $(\gamma_Y,f_Y)$. Intersection points of $\gamma_X$ and $\gamma_Y$ with edges of the polygon given by $L$ as a dissection of the surface are labeled by $p_X$ and $p_Y$, as indicated.}
    \label{fig:intersections}
\end{figure}

In the first case, let $\gamma_X$ and $\gamma_Y$ intersect such that the intersection point $p$ is not at a puncture. The intersection lies in one of the polygons given by the dual graph as a dissection of the surface, and this polygon has exactly one boundary segment. In \cref{fig:intersections} we assume that this boundary segment does not lie between $p_X$ and $p_Y$, as indicated by the red dotted line. Note that the intersection point $p$ can be on the boundary of $S$ and that the edges of the dual graph are not necessarily distinct. Given a situation as in \cref{fig:intersections}, there is an oriented graded intersection from $(\gamma_X,f_X)$ to $(\gamma_Y,f_Y)$, and hence a non-zero morphism from $X$ to $Y$ in the derived category, whenever $f_X(p_X)=f_Y(p_Y)$.

\begin{figure}[htb]
    \centering

    \begin{tikzpicture}[scale=.6, circle, inner sep=1pt]
    \draw [domain=0:25.5,variable=\t,smooth,samples=100] plot ({-\t r}: {0.0076*\t*\t}) node[left]{$\gamma_X$};
    \draw [domain=0:25.5,variable=\t,smooth,samples=100] plot ({-\t r}: {0.0072*(\t+3)*(\t+3)})node[right]{$\gamma_Y$};
    \filldraw[laminate] (0,0) circle[radius=.1];
    \draw[laminate] (0,0) -- coordinate[pos=0.65](px) coordinate[pos=0.92](py) node[at end,right]{$l$}( 40:7);
    \draw[laminate, dashed] (0,0) -- (80:6.3);
    \draw[laminate, dashed] (0,0) -- (10:6.5);
    \draw[laminate, dashed] (0,0) -- (220:5.5);
    \node[laminate] at ([shift={(0,-0.6)}]px) {$p_X$};
    \node[laminate] at ([shift={(0,-.6)}]py) {$p_Y$};

	\end{tikzpicture}
	
    \caption{Illustration of oriented graded intersection from $(\gamma_X,f_X)$ to $(\gamma_Y,f_Y)$ in the case where $\gamma_X$ and $\gamma_Y$ are infinite arcs wrapping around the same puncture at one end.} 
    
    \label{fig:intersections_puncture}
\end{figure}
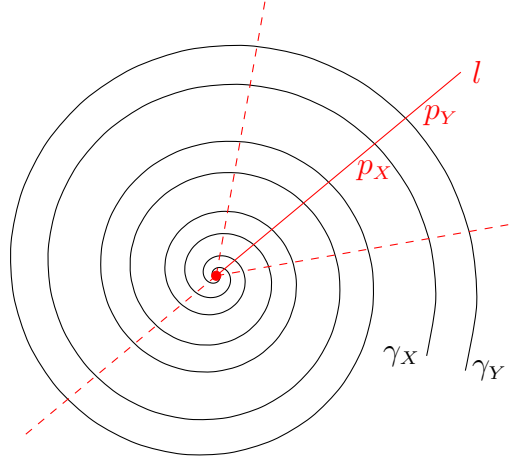

In the second case, assume that $\gamma_X$ and $\gamma_Y$ are infinite arcs, wrapping around the same puncture $p$ at one end. We then say that the arcs intersect at $p$ (even though they technically only approach the point of intersection asymptotically). Consider an edge $l$ in the dual graph $L$ ending at $p$. Let $p_X$ and $p_Y$ be intersection points of $l$ with $\gamma_X$ and $\gamma_Y$, respectively. The situation is shown in \cref{fig:intersections_puncture}. In this setup, there is an oriented graded intersection from  $\gamma_X$ to $\gamma_Y$ if $f_X(p_X)=f_Y(p_Y)$.

The possibilities for an intersection point $p$ of graded curves $\gamma_X$ and $\gamma_Y$ to give rise to a non-zero morphism in the derived category can be summarized as follows:

\begin{itemize}
    \item If $p$ corresponds to a point in the interior of the surface and $f_X$ is any grading of $\gamma_X$, then there exists a unique grading $f_Y$ of $\gamma_Y$ such that $p$ corresponds to an oriented graded intersection from $(\gamma_X, f_X)$ to $(\gamma_Y, f_Y)$ and to an oriented graded intersection from $(\gamma_Y, f_Y)$ to $(\gamma_X, f_X[1])$.
    \item If $p$ is on the boundary of $S$ and  $f_X$ is any  grading of $\gamma_X$, then there exists a unique grading $f_Y$ of $\gamma_Y$ such that $p$ corresponds to either an oriented graded intersection from $(\gamma_X, f_X)$ to $(\gamma_Y, f_Y)$ or an oriented graded intersection from $(\gamma_Y, f_Y)$ to $(\gamma_X, f_X)$.
    \item It remains to consider the case where $p$ is a puncture. For this, let $w$ denote the number of endpoints of edges of $L$ that are incident with $p$. If $f_X$ is any  grading of $\gamma_X$, then there exists a grading $f_Y$ of $\gamma_Y$ such that $p$ corresponds to a family of oriented graded intersections from $(\gamma_X,f_X)$ to $(\gamma_Y,f_Y[mw])$ and a family of oriented graded intersections from $(\gamma_Y,f_Y)$ to $(\gamma_X,f_X[(m+1)w])$ for $m\geq 0$.
\end{itemize}

\begin{example}\label{ex:morphism}
We illustrate the correspondence between morphisms and oriented graded intersections in our running example. Consider the arcs $\gamma_X$ and $\gamma_3$ from \cref{ex:indecomposable object}, see \cref{fig:indecomposable object}. Let $f_X$ be the grading of $\gamma_X$ that is described in \cref{ex:indecomposable object} and define a grading $f_3$ on $\gamma_3$ by $f_3(p)=-2$ for the sole intersection point $p \in L \cap \gamma_3$. The graded arc $(\gamma_3,f_3)$ corresponds to the object $P_3[2]$, by which we mean the stalk complex with the projective $P_3$ in degree $-2$. As $f_X(p_3) = f_3(p)$, there is an oriented graded intersection from $(\gamma_X,f_X)$ to $(\gamma_3,f_3)$. This corresponds to a non-zero morphism from $X$ to $P_3[2]$ in the derived category. 
\end{example}

\section{Weakly \texorpdfstring{$d$}{d}-representation finite gentle algebras}
\label{section: module category of gentle algebra}
In this section we give a complete classification of the weakly $d$-representation finite gentle algebras, as well as $d$-representation finite $d$-hereditary gentle algebras.

We denote the quivers
\[
\begin{tikzcd}[column sep=12, row sep=16]
&&&&&& 0 \arrow[rd, bend left] \\
1\arrow[r] & 2\arrow[r] &\cdots\arrow[r] &n& \text{and} &n \arrow[ru, bend left] & & 1 \arrow[ld, bend left] \\
&&&&&& \cdots \arrow[lu, bend left]
\end{tikzcd}
\]
by $A_n$ and $\widetilde{A}_n$, respectively. A \emph{Nakayama algebra} is a path algebra of one of these quivers modulo an admissible ideal, see for instance \cite{ASS}*{Chapter V} for more details. Recall that such an algebra is called \emph{radical square zero} if the admissible ideal is given by $J^2$, where $J$ denotes the arrow ideal. In our next result, we show that if $\Lambda$ is a gentle algebra and $\bmod \Lambda$ admits a $d$-cluster tilting subcategory for some $d \geq 2$, then $\Lambda$ is a radical square zero Nakayama algebra. In particular, this yields that the only examples of $d$-cluster tilting subcategories of module categories that arise from gentle algebras are the ones known from \cites{DI,Vaso}.

\begin{theorem} \label{thm: nakayama}
Let $\Lambda$ be a gentle algebra. If $\bmod \Lambda$ contains a $d$-cluster tilting subcategory for some $d \geq 2$, then $\Lambda$ is a radical square zero Nakayama algebra.
\end{theorem}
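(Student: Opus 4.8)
The plan is to argue by contraposition, so suppose $\bmod\Lambda$ contains a $d$-cluster tilting subcategory $\U$ for some $d \geq 2$. As recalled after \cref{def:d-CT}, such a $\U$ contains every indecomposable projective and every indecomposable injective module. Moreover, since $1 \leq d-1$, the defining equality $\U = \Ud$ forces $\Ext^1_\Lambda(\U, \U) = 0$. Now $\Ext^1_\Lambda(P, -)$ vanishes for projective $P$ and $\Ext^1_\Lambda(-, I)$ vanishes for injective $I$, so among all pairs of projective and injective objects the only first extension group that is not automatically zero is $\Ext^1_\Lambda(I, P)$ with $I$ injective and $P$ projective. Hence the existence of $\U$ yields the necessary condition that $\Ext^1_\Lambda(I, P) = 0$ for every injective $I$ and projective $P$. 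I would therefore reduce the theorem to the following purely homological assertion: a gentle algebra with $\Ext^1_\Lambda(I, P) = 0$ for all injectives $I$ and projectives $P$ is a radical square zero Nakayama algebra.

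\textbf{Isolating the two forbidden features.} Gentle relations are monomials of length two, so being a radical square zero Nakayama algebra is equivalent to the conjunction of: (a) every vertex of $Q$ has at most one incoming and at most one outgoing arrow, forcing $Q$ to be $A_n$ or $\widetilde{A}_n$ with linear orientation; and (b) every composable pair of arrows $\alpha\beta$ lies in $I$. A gentle algebra failing to be radical square zero Nakayama therefore either has a vertex with two incoming or two outgoing arrows, or admits a composable pair with $\alpha\beta \notin I$. I would handle these two configurations separately, in each case exhibiting an injective $I$ and a projective $P$ with $\Ext^1_\Lambda(I, P) \neq 0$, contradicting the reduction. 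The main tool is the description of the indecomposable projectives and injectives as string modules, together with the combinatorial formula computing $\Ext^1$ between string modules in terms of arrow (and overlap) extensions.

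\textbf{Producing the extensions.} For feature (b), write $\alpha\colon a \to b$ and $\beta\colon b \to c$ with $\alpha\beta \notin I$. The non-zero path $\alpha\beta$ makes the indecomposable projective at $a$ have radical of length at least two, and it supplies exactly the gluing datum needed for an arrow extension between the string of a suitable injective and the string of a suitable projective. Concretely, one can take a non-projective injective $I$ whose projective cover $P'$ is strictly longer than $I$, the extra composition factors being those forced by the path past its socle; then the kernel of $P' \twoheadrightarrow I$ is projective, so $0 \to \Omega I \to P' \to I \to 0$ is non-split and represents a non-zero element of $\Ext^1_\Lambda(I, \Omega I)$ with $\Omega I$ projective. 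This is precisely the mechanism already visible in the hereditary algebra $KA_n$ with $n \geq 3$, where the injective $S_1$ is non-projective with projective syzygy $P_2$. For feature (a), a vertex with two outgoing (respectively incoming) arrows makes the projective (respectively injective) sitting at it genuinely biserial, and the additional arrow at the branch again provides an arrow extension gluing an injective string to a projective string; I would verify the resulting non-vanishing directly from the combinatorial formula.

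\textbf{Main obstacle.} The crux is to make the case analysis uniform and complete: the exact shape of the string of a projective cover, and hence of the syzygy and of the available arrow extensions, depends on the local configuration around the relevant vertex, such as whether the maximal direct strings terminate at a source or sink, on which boundary their endpoints lie, and whether $Q$ is acyclic or contains an oriented cycle. In each configuration one must check both that the modules constructed are genuinely injective (respectively projective) and that the proposed gluing really yields a valid string, so that the combinatorial $\Ext^1$ is non-zero; the key structural point throughout is that such a gluing is possible exactly when a length-two path is non-zero or the quiver branches, and is obstructed otherwise (which is why radical square zero Nakayama algebras escape the contradiction). Once both features (a) and (b) are excluded, $Q$ is linearly oriented of type $A_n$ or $\widetilde{A}_n$ and every length-two path lies in $I$, which is the radical square zero Nakayama condition, completing the proof.
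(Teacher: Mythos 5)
Your opening reduction coincides with the paper's: since $d\geq 2$, the equality $\U=\Ud$ gives $\Ext^1_\Lambda(\U,\U)=0$, and since a $d$-cluster tilting subcategory contains all projectives and injectives, one gets $\Ext^1_\Lambda(I,P)=0$ for every injective $I$ and projective $P$. Your dichotomy (a)/(b) characterizing the failure of the radical square zero Nakayama property is also correct. The genuine gap is in the step you yourself label the ``main obstacle'': exhibiting, in each case, an injective $I$ and a projective $P$ with $\Ext^1_\Lambda(I,P)\neq 0$. The concrete mechanism you propose for case (b) is false for gentle algebras: the syzygy of a non-projective injective need not be projective, and there may be no injective at all with a projective syzygy. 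Consider $\Lambda=KQ/(\alpha\beta)$, where $Q$ is the two-cycle with arrows $\alpha\colon 1\to 2$ and $\beta\colon 2\to 1$, so that $\beta\alpha\neq 0$. This is a gentle Nakayama algebra that is not radical square zero, hence exactly an algebra your argument must rule out, and it has feature (b) but not feature (a). Here $P_2=M(\beta\alpha)=I_2$ is projective--injective, the unique non-projective injective is $I_1=M(\beta)$, its projective cover is $P_2$, and $\Omega I_1\cong S_2$ is not projective; moreover $\Ext^1_\Lambda(I_1,P_2)=0$, so the projective-cover sequence detects nothing. The only non-vanishing group is $\Ext^1_\Lambda(I_1,P_1)$, whose non-split representative is $0\to P_1\to P_2\oplus S_1\to I_1\to 0$. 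Its middle term is decomposable, so it is an overlap extension rather than an arrow extension; the ``glue an injective string to a projective string along an arrow'' picture you invoke in both cases therefore misses it as well.

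What your proposal lacks is the paper's uniform, vertex-local construction, which removes the need for any case distinction. For an arbitrary vertex $x$, writing its (at most two) incoming arrows as $\alpha,\beta$ and outgoing arrows as $\gamma,\delta$ with relations $\alpha\gamma,\beta\delta\in I$ and allowing arrows to be absent, Schr\"oer's result \cite{Schroer} yields a short exact sequence $0\to P_x\to M(v_\alpha\alpha\delta u_\delta)\oplus M(v_\beta\beta\gamma u_\gamma)\to I_x\to 0$, an overlap extension in the sense of \cite{CPS}, where $u_\gamma,u_\delta,v_\alpha,v_\beta$ are the maximal strings making $\gamma u_\gamma$, $\delta u_\delta$, $v_\alpha\alpha$, $v_\beta\beta$ non-zero paths. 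This sequence is non-split unless $\alpha=\gamma=\varnothing$ or $\beta=\delta=\varnothing$, and requiring splitness at every vertex forces each vertex to carry either a single adjacent arrow or one incoming and one outgoing arrow whose composite lies in $I$ --- which is precisely the radical square zero Nakayama condition. In the two-cycle example above, this construction at the vertex $1$ produces exactly the extension $0\to P_1\to P_2\oplus S_1\to I_1\to 0$ that your mechanisms fail to find. Without this (or some correct replacement for your extension-producing step), the heart of the argument --- the non-vanishing of some $\Ext^1_\Lambda(I,P)$ whenever the algebra is not radical square zero Nakayama --- remains unproved.
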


\begin{proof}
Any vertex $x$ in the quiver of the gentle algebra $\Lambda$
is part of a subquiver  
\begin{center}
\begin{tikzpicture}[yscale=.5, xscale=1.2]
\node (x) at (0,0) {$x$};
\node (a) at (-1,1) {$a$};
\node (b) at (-1,-1) {$b$};
\node (c) at (1,1) {$c$};
\node (d) at (1,-1) {$d$};

\draw[->] (a)--node[above]{$\alpha$}(x);
\draw[->] (b)--node[below]{$\beta$}(x);
\draw[->] (x)--node[above]{$\gamma$}(c);
\draw[->] (x)--node[below]{$\delta$}(d);

\draw[dotted] (-.4,.4) to[bend left] (.4,.4);
\draw[dotted] (-.4,-.4) to[bend right] (.4,-.4);
\end{tikzpicture}
\end{center}
with relations as indicated by the dotted lines. Note that we allow arrows to be non-existent. We write $\alpha = \varnothing$ in the case where the arrow $\alpha$ in the above figure does not exist.

The projective module $P_x$ associated to the vertex $x$ is represented by the string $u_\gamma^{-1}\gamma^{-1}\delta u_\delta$, where $u_\delta$ and $u_\gamma$ are the (possibly trivial) maximal strings such that $\delta u_\delta$ and $\gamma u_\gamma$ are non-zero strings. Similarly, the injective $I_x$ is represented by $v_\alpha\alpha\beta^{-1} v_\beta^{-1} $, where $v_\alpha$ and $v_\beta$ are maximal strings such that $v_\alpha \alpha$ and $v_\beta \beta$ are non-zero strings. By \cite{Schroer}, there is a short exact sequence
\[0 \to M(u_\gamma^{-1}\gamma^{-1}\delta u_\delta) \rightarrow M(v_\alpha\alpha\delta u_\delta) \oplus M(v_\beta\beta\gamma u_\gamma)\rightarrow M(v_\alpha\alpha\beta^{-1} v_\beta^{-1}) \to 0\]
starting in $P_x$ and ending in $I_x$. This sequence does not split as long as the following conditions are satisfied:
\begin{enumerate}[label=(\roman*)]
    \item If $\alpha = \varnothing$, then $\gamma \neq \varnothing$;
    \item If $\beta = \varnothing$, then $\delta \neq \varnothing$.
\end{enumerate}
In this case the non-split short exact sequence is known as an \emph{overlap extension} \cite{CPS}*{Definition 3.1}, see also \cite{BDMTY}, and we have $\Ext^1_{\Lambda}(I_x,P_x) \neq 0$.

Assume that $\bmod \Lambda$ contains a $d$-cluster tilting subcategory for some $d \geq 2$. As a $d$-cluster tilting subcategory necessarily contains all projective and injective modules, this implies that $\Ext^1_{\Lambda}(I_x,P_x) = 0$ for every vertex $x$. By our argument above, this means that for every vertex in the quiver of $\Lambda$, at least one of (i) or (ii) does not hold. Considering all possible configurations of the subquiver associated to a vertex $x$, we see that this only happens if there is exactly one arrow adjacent to $x$ or if we have a situation of the type
\begin{center}
\begin{tikzpicture}
\node (y) at (-2,0) {};
\node (x) at (0,0) {$x$};
\node (z) at (2,0) {};

\draw[->] (y)--node{} (x);
\draw[->] (x)--node{} (z);

\draw[dotted] (-0.5,0) to [out=90, in=90, distance=0.5cm] (0.5,0);
\end{tikzpicture}
\end{center}
with exactly one incoming and one outgoing arrow. This yields the result.
\end{proof}

\begin{remark}
Note that the result above does not hold in the more general case where $\Lambda$ is assumed to be a string algebra. For examples of this, see for instance \cite{Vaso21}.
\end{remark}

Combining \cref{thm: nakayama} with previously known classification results for Nakayama algebras \cites{DI,Vaso}, we obtain the following characterization of weakly $d$-representation finite gentle algebras. Recall that we denote the arrow ideal of a path algebra $KQ$ by $J$.

\begin{corollary}\label{cor:weakly d-RF gentle}
Let $\Lambda$ be a gentle algebra and assume $d \geq 2$. Then $\Lambda$ is weakly $d$-representation finite if and only if one of the following statements holds:
    \begin{enumerate}
    \item $\Lambda = KA_n/J^2~$ with $~n=dk+1~$ for some $k\geq 0$;
    \item $\Lambda = K\widetilde{A}_n/J^2~$ with $~n = dk-1~$ for some $k \geq 1$.
    \end{enumerate}
\end{corollary}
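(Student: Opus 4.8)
The plan is to derive the corollary by combining \cref{thm: nakayama} with the known classifications of weakly $d$-representation finite Nakayama algebras in \cite{Vaso} and \cite{DI}. Both implications of the equivalence then reduce to citing these results once the list of candidate algebras has been cut down.

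For the implication from left to right, suppose $\Lambda$ is weakly $d$-representation finite. By definition it admits a $d$-cluster tilting module $M$, and hence $\add M$ is a $d$-cluster tilting subcategory of $\bmod \Lambda$. As $d \geq 2$, we may apply \cref{thm: nakayama} to conclude that $\Lambda$ is a radical square zero Nakayama algebra. Since $\Lambda$ is connected, it is therefore isomorphic to $KA_n/J^2$ for some $n \geq 1$ or to $K\widetilde{A}_n/J^2$ for some $n \geq 1$. Both of these families consist of representation finite algebras, so for them the existence of a $d$-cluster tilting subcategory is equivalent to being weakly $d$-representation finite: any $d$-cluster tilting subcategory has only finitely many indecomposable objects, hence is of the form $\add M$. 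This lets us apply the cited results directly and also produces the converse implication at the same time.

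It remains to single out, within the two families, exactly the members that are weakly $d$-representation finite. In the linear case, specializing \cite{Vaso}*{Theorem 2} to the radical square zero algebra $KA_n/J^2$ shows that it is weakly $d$-representation finite if and only if $d$ divides $n-1$, that is, $n = dk+1$ for some $k \geq 0$. In the cyclic case, $K\widetilde{A}_n/J^2$ is a cyclic Nakayama algebra with homogeneous relations, so \cite{DI}*{Theorem 5.1} applies and yields that it is weakly $d$-representation finite if and only if $d$ divides $n+1$, that is, $n = dk-1$ for some $k \geq 1$. Reading these two equivalences in both directions establishes the corollary.

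I do not expect a substantial obstacle, since the main work is carried out by \cref{thm: nakayama} together with the cited classifications. The only points requiring genuine care are of a bookkeeping nature: one must reconcile the indexing conventions of \cite{Vaso} and \cite{DI} (the number of vertices, the homological parameter, and the nilpotency degree of the radical) with those used here, and verify the boundary cases --- in particular $n = 1$ in the linear family, where $\Lambda \cong K$, and the smallest admissible $n$ in the cyclic family --- so as to confirm the stated ranges $k \geq 0$ and $k \geq 1$.
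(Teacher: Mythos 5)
Your proposal is correct and follows essentially the same route as the paper: apply \cref{thm: nakayama} to reduce to radical square zero Nakayama algebras, invoke \cite{Vaso}*{Theorem 2} and \cite{DI}*{Theorem 5.1} for the two families, and pass between $d$-cluster tilting subcategories and $d$-cluster tilting modules via representation finiteness (the paper makes this same remark, phrased as the cited existence results yielding a module since the algebra is representation finite). The only difference is presentational: the paper condenses the divisibility bookkeeping into the phrase ``using that $\Lambda$ has Loewy length $2$,'' whereas you spell out the resulting congruences $d \mid n-1$ and $d \mid n+1$ explicitly.
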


\begin{proof}
If one of the statements \emph{(1)} or \emph{(2)} holds, it follows from \cite{Vaso}*{Theorem 2} and \cite{DI}*{Theorem 5.1} that $\Lambda$ is weakly $d$-representation finite. For this, notice that existence of a $d$-cluster tilting subcategory implies existence of a $d$-cluster tilting module as our algebra is representation finite.

Assume next that $\Lambda$ is weakly $d$-representation finite. In particular, this means that $\bmod \Lambda$ contains a $d$-cluster tilting subcategory. By \cref{thm: nakayama}, this yields $~\Lambda = KA_n/J^2~$ or $~\Lambda = K\widetilde{A}_n/J^2~$ for some $n$. Our conclusion now follows by applying  \cite{Vaso}*{Theorem 2} and \cite{DI}*{Theorem 5.1} and using that $\Lambda$ has Loewy length $2$.
\end{proof}

Our approach also yields a classification of $d$-representation finite $d$-hereditary gentle algebras. Recall that a weakly $d$-representation finite algebra is called $d$-representation finite $d$-hereditary in the case where the global dimension is at most $d$.

\begin{corollary} \label{cor: d-RF gentle}
Let $\Lambda$ be a gentle algebra of global dimension $d$ with \mbox{$d \geq 2$}. Then $\Lambda$ is $d$-representation finite $d$-hereditary if and only if \mbox{$\Lambda = KA_n/J^2$} with $n=d+1$.
\end{corollary}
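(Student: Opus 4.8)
The plan is to combine the classification of weakly $d$-representation finite gentle algebras in \cref{cor:weakly d-RF gentle} with a global dimension computation. Since being $d$-representation finite $d$-hereditary means being weakly $d$-representation finite together with having global dimension at most $d$, both implications reduce to understanding the global dimension of the algebras appearing in \cref{cor:weakly d-RF gentle}.

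For the ``if'' direction, suppose $\Lambda = KA_n/J^2$ with $n = d+1$. Taking $k=1$ in part \emph{(1)} of \cref{cor:weakly d-RF gentle} shows that $\Lambda$ is weakly $d$-representation finite, so it only remains to check that $\gldim \Lambda = d$. I would compute this directly from the projective resolutions of the simple modules: for the linearly oriented quiver $1 \to 2 \to \cdots \to n$ with $J^2 = 0$, the syzygy of the simple $S_i$ is $S_{i+1}$ for $i < n$, while $S_n = P_n$ is projective. Hence $\operatorname{pd} S_i = n - i$, giving $\gldim KA_n/J^2 = n-1$. With $n = d+1$ this equals $d$, so $\Lambda$ is $d$-representation finite $d$-hereditary.

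For the ``only if'' direction, assume $\Lambda$ is gentle of global dimension $d$ and $d$-representation finite $d$-hereditary. Then $\Lambda$ is in particular weakly $d$-representation finite, so by \cref{cor:weakly d-RF gentle} it has one of the two listed forms. The key point is to rule out the cyclic case $\Lambda = K\widetilde{A}_n/J^2$: each indecomposable projective over such an algebra is uniserial of length $2$ and coincides with an indecomposable injective, so $\Lambda$ is self-injective, and being non-semisimple it has infinite global dimension. This contradicts $\gldim \Lambda = d < \infty$. Thus $\Lambda = KA_n/J^2$ with $n = dk+1$, and the formula $\gldim KA_n/J^2 = n-1 = dk$ together with $\gldim \Lambda = d$ forces $k = 1$, that is, $n = d+1$.

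All the computations here are routine; the one substantive observation is the dichotomy in global dimension between the linear and cyclic radical square zero Nakayama algebras (finite $n-1$ versus infinite), which is exactly what singles out the type $A$ family. I therefore expect no genuine obstacle beyond carefully recording the self-injectivity of $K\widetilde{A}_n/J^2$ and the syzygy computation for $KA_n/J^2$.
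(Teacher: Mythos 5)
Your proof is correct, but it takes a different route from the paper. The paper's proof is a one-line citation: having established in \cref{thm: nakayama} that $\Lambda$ must be a radical square zero Nakayama algebra, it invokes Vaso's classification of $d$-representation finite $d$-hereditary Nakayama algebras \cite{Vaso}*{Theorem 3}, which immediately yields $\Lambda = KA_n/J^2$ with $n = d+1$. You instead route the argument through \cref{cor:weakly d-RF gentle} (the weakly $d$-representation finite classification) and then do the global dimension analysis yourself: the syzygy computation $\operatorname{pd} S_i = n-i$ giving $\gldim KA_n/J^2 = n-1$, and the self-injectivity of $K\widetilde{A}_n/J^2$ (non-semisimple, hence of infinite global dimension) to eliminate the cyclic family. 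Both steps are correct, including the observation that $n-1 = dk$ together with $\gldim\Lambda = d$ forces $k=1$. What the paper's approach buys is brevity and a clean division of labor, delegating all the representation-theoretic content to the two theorems of Vaso; what yours buys is self-containedness at the level of this corollary --- you need Vaso's Theorem 2 (via \cref{cor:weakly d-RF gentle}) but not his Theorem 3, and the dichotomy that singles out the linear type $A$ family (finite versus infinite global dimension) is made completely explicit by elementary means. Either argument would be acceptable in the paper.
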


\begin{proof}This is an immediate consequence of \cref{thm: nakayama} combined with Vaso's classification of $d$-representation finite $d$-hereditary Nakayama algebras \cite{Vaso}*{Theorem 3}.
\end{proof}

In the case where $d\geq 3$ or the preprojective algebra of $\Lambda$ is a planar quiver with potential, \cref{cor: d-RF gentle} can be recovered by work of Sand{\o}y and Thibault \cite{ST}*{Theorem B}. However, their result does not cover all gentle algebras, since there exist gentle algebras with a non-planar quiver and global dimension $2$. One algebra of this type is shown in \cref{ex:K33}, and it gives rise to an infinite family of such algebras.

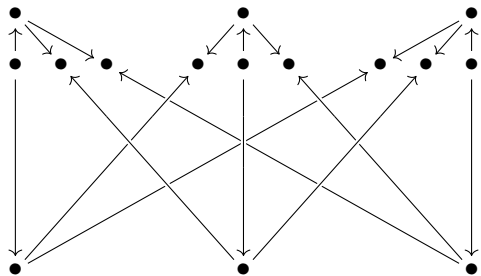
\begin{figure}[bht]
    \centering
    \begin{tikzpicture}[xscale=3,yscale=1.7, ->,circle, inner sep=1pt]
	\node (1) at (0,2) {$\bullet$};
	\node (1a) at (0,1.6) {$\bullet$};
	\node (1b) at (.2,1.6) {$\bullet$};
	\node (1c) at (.4,1.6) {$\bullet$};

	\node (2) at (1,2) {$\bullet$};
	\node (2a) at (.8,1.6) {$\bullet$};
	\node (2b) at (1,1.6) {$\bullet$};
	\node (2c) at (1.2,1.6) {$\bullet$};
	
	\node (3) at (2,2) {$\bullet$};
	\node (3a) at (1.6,1.6) {$\bullet$};
	\node (3b) at (1.8,1.6) {$\bullet$};
	\node (3c) at (2,1.6) {$\bullet$};	
	
	\node (a) at (0,0) {$\bullet$};
	\node (b) at (1,0) {$\bullet$};
	\node (c) at (2,0) {$\bullet$};

	\draw (1a) -- (1);
	\draw (1) -- (1b);
	\draw (1) -- (1c);

	\draw (2) -- (2a);
	\draw (2b) -- (2);
	\draw (2) -- (2c);

	\draw (3) -- (3a);
	\draw (3) -- (3b);
	\draw (3c) -- (3);

	\draw (1a) -- (a);
	\draw (b) -- (1b);
	\draw (c) -- (1c);

	\draw (a) -- (2a);
	\draw (2b) -- (b);
	\draw (c) -- (2c);

	\draw (a) -- (3a);
	\draw (b) -- (3b);
	\draw (3c) -- (c);
	
	%Make intersections prettier:
	\draw[draw=white,double=black, line width=0.8pt,-, double distance=.4pt] (1,.8)--(1,1.2);
	\draw[draw=white,double=black, line width=0.8pt,-, double distance=.4pt] (.4,1.2)--(.8,.4);
	\draw[draw=white,double=black, line width=0.8pt,-, double distance=.4pt] (1.6,1.2)--(1.2,.4);
	\draw[draw=white,double=black, line width=0.8pt,-, double distance=.4pt] (.7,1.4)--(.6,1.2);
	\draw[draw=white,double=black, line width=0.8pt,-, double distance=.4pt] (1.3,1.4)--(1.4,1.2);
\end{tikzpicture}
    \caption{The non-planar quiver used in \cref{ex:K33}.}
    \label{fig:K33}
\end{figure}

\begin{example}\label{ex:K33}
    Consider the path algebra $\Lambda=KQ/I$, where $Q$ is the quiver in \cref{fig:K33} and $I$ is any set of relations making $\Lambda$ gentle, see \cref{def:gentle}. Then $\Lambda$ has global dimension $2$.
\end{example}

\section{\texorpdfstring{$d$}{d}-cluster tilting subcategories ofthe  derived category
} \label{section: derived category of gentle algebra}

In this section we study the derived category of a gentle algebra. We discuss under what circumstances this category contains $d$-cluster tilting subcategories that are closed under $[d]$. Our main tool in this investigation is the geometric model from \cite{OPS}, as described in \cref{sec: geometric model}. All $\Hom$ sets in this section are considered in the derived category.

Our proofs are based on the observation below, which gives a useful condition satisfied by perfect objects contained in a $d$-cluster tilting subcategory closed under $[d]$. This result can be deduced from \cite{IY}*{Proposition 3.4} in the case where the algebra has finite global dimension. Note that \cref{prop: tau^-1 rigid} holds for any finite dimensional algebra.

\begin{proposition} \label{prop: tau^-1 rigid}
Let $\Lambda$ be a finite dimensional algebra and consider a $d$-cluster tilting subcategory $\U\subseteq\D^b(\Lambda)$ that is closed under $[d]$ for some $d \geq 2$. If an indecomposable perfect object $X$ is contained in $\U$, then $\Hom(\tau X,X) = 0$.
\end{proposition}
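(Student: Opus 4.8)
The plan is to exploit that $X$ is perfect in order to bring relative Serre duality into play, and thereby reduce the claim to a single internal $\Ext^1$-vanishing inside $\U$. Write $\nu = -\otimes^{\mathbb{L}}_\Lambda D\Lambda$ for the Nakayama functor, where $D = \Hom_K(-,K)$. For the perfect object $X$ the AR-translate is given by $\tau X \cong \nu X[-1]$, so that
\[
\Hom(\tau X, X) \cong \Hom(\nu X[-1], X) \cong \Ext^1(\nu X, X).
\]
Since $d \geq 2$ and $\U = \Ud = \dU$, the right-hand side vanishes as soon as both $\nu X$ and $X$ lie in $\U$. As $X \in \U$ by assumption, the entire proof reduces to showing that $\nu X \in \U$. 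The key structural input is the relative Serre duality $\Hom(P, Z) \cong D\Hom(Z, \nu P)$, valid for every perfect object $P$ and every $Z \in \D^b(\Lambda)$; the point is that every shift $X[i]$ is again perfect, so this duality is available with the perfect object placed in the first argument.

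Before carrying this out I would record the auxiliary vanishing that $\Hom(U, U'[j]) = 0$ for all $U, U' \in \U$ whenever $j \not\equiv 0 \pmod{d}$. This follows directly from the $d$-cluster tilting property together with closure under $[d]$: writing $j = dk + i$ with $1 \leq i \leq d-1$, closure gives $U'[dk] \in \U$, and hence $\Hom(U, U'[j]) = \Ext^i(U, U'[dk]) = 0$ because $U \in \dU$.

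The heart of the argument is then the verification that $\nu X \in \U = \Ud$, for which it suffices to show $\Ext^i(Y, \nu X) = 0$ for every $Y \in \U$ and every $1 \leq i \leq d-1$. Applying relative Serre duality to the perfect object $X[i]$ yields
\[
\Ext^i(Y, \nu X) = \Hom(Y, \nu X[i]) \cong D\Hom(X[i], Y) = D\Hom(X, Y[-i]).
\]
Since $X, Y \in \U$ and $-i \not\equiv 0 \pmod{d}$ for $1 \leq i \leq d-1$, the auxiliary vanishing forces $\Hom(X, Y[-i]) = 0$, so $\Ext^i(Y, \nu X) = 0$ and thus $\nu X \in \U$. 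Combined with the reduction above, this gives $\Hom(\tau X, X) \cong \Ext^1(\nu X, X) = 0$.

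The step I expect to be the main obstacle, and the only place where perfectness of $X$ is genuinely used, is the passage through Serre duality. For an arbitrary finite-dimensional algebra there is no Serre functor on all of $\D^b(\Lambda)$, and $\tau X \cong \nu X[-1]$ is typically coperfect rather than perfect, so one cannot dualize morphisms out of $\tau X$ directly. The care needed is to organize every duality step so that the perfect object (always $X$ or a shift $X[i]$) sits in the first argument of the relevant $\Hom$; this is precisely what makes the argument work for all finite-dimensional $\Lambda$, whereas in the finite global dimension case one could instead invoke the genuine Serre functor and \cite{IY}*{Proposition 3.4}.
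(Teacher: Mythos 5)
Your proof is correct and takes essentially the same route as the paper's: both hinge on the partial Serre duality $\Hom(Z,\nu P)\cong D\Hom(P,Z)$ for perfect $P$ (where $\nu=\tau[1]$) together with the $[d]$-closure trick identifying $\Hom(X,Y[-i])$ with $\Hom(X[d],Y[d-i])$ inside $\U$. The only organizational difference is that you argue directly, first establishing the intermediate claim $\nu X\in\U$ and then reading off the vanishing, whereas the paper assumes $\Hom(\tau X,X)\neq 0$ and extracts a contradiction from a single witness $Y\in\U$ with $\Hom(Y,\tau X[1][i])\neq 0$.
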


\begin{proof}
Recall that as the subcategory $\U$ is $d$-cluster tilting in $\D^b(\Lambda)$, 
we have
\begin{align*}
    \U
    = &\{Y\in \D^b(\Lambda) \mid \Hom(\U,Y[i])=0 ~\text{ for }~ 1\leq i \leq d-1\}\\
    = &\{Y\in \D^b(\Lambda) \mid \Hom(Y,\U[i])=0 ~\text{ for }~ 1\leq i \leq d-1\}.
\end{align*}
As in \cref{subsec: d-CT}, we denote the first of these sets by $\Ud$ and the second one by $\dU$.

Suppose towards a contradiction that there exists an indecomposable perfect object $X$ in $\U$ with $\Hom(\tau X,X)\neq 0$. This yields
\[
\Hom(\tau X[1],X[1]) \cong \Hom(\tau X,X)\neq 0.
\] 
Since $d \geq 2$ and $\U$ is $d$-cluster tilting, this means that $\tau X[1]\notin \dU = \Ud$. Thus, there exists an object $Y \in \U$ such that $\Hom(Y,\tau X[1][i])\neq 0$ for some $1\leq i\leq d-1$. 

Using that $\tau[1] \colon \D^b(\proj\Lambda) \rightarrow \D^b(\Lambda)$ is a partial Serre functor in the sense of \cite{OppermannPS}, see in particular \cite{OppermannPS}*{Example 3.7},  we obtain
\begin{align*}
    \Hom(Y,\tau X[1][i])
    & \cong \Hom(Y[-i],\tau X[1])\\
    & \cong D \Hom (X, Y[-i])\\
    & \cong D \Hom (X[d], Y[d-i]) \neq 0.
\end{align*}
Since $\U$ is closed under $[d]$, we have $X[d]\in\U$. Noting that $1\leq d-i\leq d-1$, this yields $Y\notin \Ud=\U$, which is a contradiction. 
\end{proof}

Our next aim is to show that for gentle algebras there are limitations on arcs corresponding to objects in a $d$-cluster tilting subcategory. Recall that we use the notation $(\gamma_X, f_X)$ for the graded arc that corresponds to a string object $X$ in the derived category and that a finite arc is called minimal if it is homotopy equivalent to a segment of the boundary with no marked points.

\begin{lemma} \label{lem:exclude-bridging-arcs} 
Let $\Lambda$ be a gentle algebra. If $X\in \D^b(\Lambda)$ corresponds to a finite graded arc that is not minimal, then $\Hom(\tau X, X)\neq 0$.
\end{lemma}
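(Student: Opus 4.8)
\textbf{The plan} is to use the combinatorics of the AR-translate in the geometric model together with the $\Hom$--intersection dictionary from \cite{OPS} to exhibit an explicit non-zero morphism $\tau X \to X$. The starting observation is that $\tau X$ is obtained from $X$ by moving both endpoints of $\gamma_X$ to the next marked points along the boundary, following the orientation of $S$ and equipped with a suitable grading, as recalled in \cref{sec: geometric model}. Since $\gamma_X$ is finite, both its endpoints sit at marked points, and this operation is well-defined on each end. The goal is to produce an oriented graded intersection from $(\gamma_{\tau X}, f_{\tau X})$ to $(\gamma_X, f_X)$, which by the correspondence will furnish a non-zero element of $\Hom(\tau X, X)$.

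\textbf{First} I would locate the crossing. Because $\gamma_X$ is \emph{not} minimal, it is not homotopic to a boundary segment without marked points; equivalently, it must cross at least one edge of the dual graph $L$, and more importantly, as we move an endpoint of $\gamma_X$ to the adjacent marked point to form $\gamma_{\tau X}$, the arc $\gamma_{\tau X}$ is forced to cross $\gamma_X$ near that shared region of the surface. The key geometric point is that a minimal arc is precisely the degenerate case where this shift produces no genuine crossing (the moved endpoint sweeps across a boundary piece with no marked point in between and no intersection with $L$), whereas for a non-minimal arc the shift necessarily creates an intersection point $p$ of $\gamma_{\tau X}$ with $\gamma_X$ in the interior or on the boundary of $S$. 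I would make this precise by analysing the local picture at an endpoint: the endpoint of $\gamma_X$ and the endpoint of $\gamma_{\tau X}$ lie at consecutive marked points, and the two arcs together bound a region containing exactly one edge of $L$ (the edge whose crossing records the top or bottom of the complex $X$), which forces a transverse intersection.

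\textbf{Next} I would handle the grading. By the trichotomy summarizing when an intersection yields a morphism, for an interior crossing any grading $f_{\tau X}$ of $\gamma_{\tau X}$ admits a unique grading of $\gamma_X$ making $p$ an oriented graded intersection from $(\gamma_{\tau X}, f_{\tau X})$ to $(\gamma_X, f_X)$ (and dually), and similarly on the boundary. The content of the lemma is not the \emph{existence} of a crossing but that the specific grading $f_{\tau X}$ determined by the AR-translate formula is compatible, so that the intersection at $p$ is oriented \emph{from} $\tau X$ \emph{to} $X$ rather than the reverse. I would verify this by tracking how the AR-shift adjusts the grading: moving an endpoint forward in the orientation of $S$ changes the grading at the relevant intersection point of $L$ in a controlled way (by $\pm 1$ according to which side the boundary segment falls on), and this is exactly the adjustment needed to match $f_{\tau X}(p_{\tau X}) = f_X(p_X)$ in the condition $f_X(p_X) = f_Y(p_Y)$ from \cref{fig:intersections}.

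\textbf{The main obstacle} I expect is precisely this grading bookkeeping: ensuring that the shift prescribed by $\tau$ yields an oriented graded intersection in the correct direction, rather than only establishing an unoriented crossing. The existence of a geometric crossing for non-minimal arcs is intuitively clear, but the sign and degree conventions in \cite{OPS}*{Definition 3.7} must be applied carefully, and one must confirm that the crossing created by the endpoint shift does not get cancelled when passing to minimal (homotopy-minimal) representatives of $\gamma_{\tau X}$ and $\gamma_X$. I would address this by choosing representatives realizing the minimal intersection number and checking that at least one crossing survives exactly when $\gamma_X$ is non-minimal; the minimal case is the unique configuration in which the shifted arc can be homotoped off $\gamma_X$ entirely, which is what makes the hypothesis sharp.
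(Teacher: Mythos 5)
Your proposal takes a genuinely different route from the paper, but as written it has two gaps, and they are precisely the two steps you describe in the conditional ("I would locate\dots", "I would verify\dots") rather than actually carry out. First, the existence of the intersection. Your mechanism for producing it --- "analysing the local picture at an endpoint", where the two arcs are claimed to bound a region forcing a transverse crossing --- cannot work as a local argument: near any single endpoint the shifted arc $\gamma_{\tau X}$ can always be isotoped off $\gamma_X$, so no crossing is forced locally. When a crossing exists it is a global phenomenon (e.g.\ in the disk, $\gamma_X$ from $m_1$ to $m_3$ and $\gamma_{\tau X}$ from $m_2$ to $m_4$ cross because their endpoints interleave on the boundary circle, not because of anything happening near $m_1$ and $m_2$), and on surfaces whose boundary components carry few marked points the arcs $\gamma_X$ and $\gamma_{\tau X}$ may share endpoints (already for the Kronecker algebra, whose surface is an annulus with one marked point per boundary), so intersections at common endpoints must be treated as a main case; there the dictionary of \cref{sec: geometric model} produces a morphism in \emph{one} direction only, so the orientation is exactly what is at stake. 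Second, the grading. Both gradings are fixed --- $f_X$ by hypothesis and $f_{\tau X}$ by the AR-translate --- so the "for any grading of one arc there is a unique compatible grading of the other" trichotomy buys nothing: an unoriented crossing a priori yields only a non-zero map $\tau X \to X[j]$ for some $j\in\Z$, or a map in the wrong direction. Verifying $j=0$ with the correct orientation requires the explicit grading of $\gamma_{\tau X}$ from \cite{OPS}*{Section 5}, which you acknowledge as "the main obstacle" but never supply. Since this verification is the entire content of your approach, the proposal is not a proof.

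The paper's argument avoids both issues by using an idea absent from your proposal: the middle term of the AR-triangle. Moving only \emph{one} endpoint of $\gamma_X$ at a time produces arcs $\gamma_{Y_1}$ and $\gamma_{Y_2}$, and the AR-triangle over $X$ is $\tau X \to Y_1 \oplus Y_2 \to X$ (see \cref{fig:tau_x}). Minimality of $\gamma_X$ is precisely the degenerate case in which one of $\gamma_{Y_1}$, $\gamma_{Y_2}$ is contractible; hence for non-minimal $\gamma_X$ both $Y_1$ and $Y_2$ are non-zero indecomposables, and the composite $\tau X \to Y_1 \to X$ in an AR-triangle with decomposable middle term is non-zero by standard AR-theory. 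This yields $\Hom(\tau X, X)\neq 0$ with no grading bookkeeping and no intersection analysis at all, which is what makes the paper's proof short; any repair of your direct approach would in effect have to reprove this dichotomy about the middle term.
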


\begin{proof}
As the graded arc $\gamma_X$ corresponding to $X$ is finite, we can use the algorithm described in \cite{OPS}*{Section 5} to compute $\tau X$. We obtain the graded arc $\gamma_{\tau X}$ corresponding to $\tau X$ by moving the endpoints of $\gamma_X$ to the next marked points following the orientation of the boundary. This is illustrated in \cref{fig:tau_x}, where the marked points in the figure are not necessarily distinct.

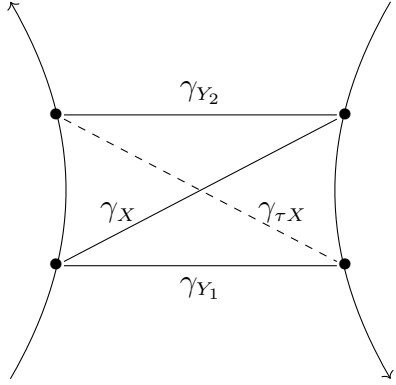
\begin{figure}[ht]
    \centering
\begin{tikzpicture}
\draw[->] (0,0) to[bend right] (0,5);
\draw[->] (5,5) to[bend right] (5,0);
\node[inner sep=0pt] (l1) at (.6,1.5) {$\bullet$};
\node[inner sep=0pt] (l2) at (.6,3.5) {$\bullet$};
\node[inner sep=0pt] (r1) at (4.4,1.5) {$\bullet$};
\node[inner sep=0pt] (r2) at (4.4,3.5) {$\bullet$};
\draw (l1) to node[above,pos=.2]{$\gamma_X$} (r2);
\draw (l1) to node[below]{$\gamma_{Y_1}$}(r1);
\draw (l2) to node[above]{$\gamma_{Y_2}$}(r2);
\draw[dashed] (l2) to node[above, pos=.8]{$\gamma_{\tau X}$} (r1);
\end{tikzpicture}    
    \caption{Geometric computation of $\tau X$.} 
    \label{fig:tau_x}
\end{figure}

Note that the arc $\gamma_X$ is minimal if and only if either $\gamma_{Y_1}$ or $\gamma_{Y_2}$ is contractible to a point. When $\gamma_X$ is not minimal, the objects $Y_1$ and $Y_2$ are hence non-zero. This yields an almost split sequence 
\[\tau X\rightarrow Y_1\oplus Y_2\rightarrow X\] 
with two indecomposable summands in the middle term. The morphism $\tau X\rightarrow Y_1\rightarrow X$ is non-zero, and the conclusion follows.
\end{proof}

As a consequence of our two previous results, we find that if an indecomposable perfect object is contained in a $d$-cluster tilting subcategory that is closed under $[d]$, then it corresponds to a minimal graded arc. 

\begin{lemma}\label{lem:all minimal}
Let $\Lambda$ be a gentle algebra and consider a $d$-cluster tilting subcategory $\U \subseteq \D^b(\Lambda)$ that is closed under $[d]$ for some $d \geq 2$. If an indecomposable perfect object is contained in $\U$, then it corresponds to a minimal graded arc.
\end{lemma}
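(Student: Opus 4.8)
The plan is to combine the two immediately preceding results, \cref{prop: tau^-1 rigid} and \cref{lem:exclude-bridging-arcs}, in the most direct possible way, since together they essentially dictate the conclusion. The key observation is that these two lemmas approach the quantity $\Hom(\tau X, X)$ from opposite directions: \cref{prop: tau^-1 rigid} shows that membership in a $d$-cluster tilting subcategory closed under $[d]$ forces this $\Hom$-space to vanish for indecomposable perfect objects, while \cref{lem:exclude-bridging-arcs} shows that a finite graded arc which is \emph{not} minimal forces this same $\Hom$-space to be non-zero. These are contradictory, so the non-minimal case cannot occur.

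First I would let $X$ be an indecomposable perfect object contained in $\U$. Since $X$ is perfect, by the discussion in \cref{sec: geometric model} it corresponds either to a finite graded arc or to a graded closed curve; I would need to note that the objects under consideration are string objects corresponding to arcs (the statement is phrased in terms of arcs, and the relevant computation of $\tau$ in \cref{lem:exclude-bridging-arcs} is carried out for finite arcs), so I restrict attention to the case where $X$ corresponds to a finite graded arc $(\gamma_X, f_X)$. The goal is then to show $\gamma_X$ is minimal.

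Next I would argue by contradiction: suppose $\gamma_X$ is finite but not minimal. Then \cref{lem:exclude-bridging-arcs} gives $\Hom(\tau X, X) \neq 0$. On the other hand, since $X$ is an indecomposable perfect object in the $d$-cluster tilting subcategory $\U$ (which is closed under $[d]$ with $d \geq 2$), \cref{prop: tau^-1 rigid} gives $\Hom(\tau X, X) = 0$. This contradiction shows that $\gamma_X$ must be minimal, completing the proof.

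I do not expect a genuine obstacle here, as the heavy lifting is done in the two preceding results; this lemma is essentially their formal conjunction. The only point requiring a modicum of care is the bookkeeping at the start: confirming that an indecomposable perfect object in this setting is indeed a string object associated to a finite arc (rather than a band object associated to a closed curve) so that both preceding lemmas apply, and that \cref{prop: tau^-1 rigid} is invoked with its hypotheses exactly met. These are routine verifications rather than substantive difficulties.
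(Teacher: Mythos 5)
Your core argument---combining \cref{lem:exclude-bridging-arcs} and \cref{prop: tau^-1 rigid} to get contradictory conclusions about $\Hom(\tau X,X)$ for a finite non-minimal arc---is exactly the paper's argument. However, there is a genuine gap at the point you dismiss as bookkeeping: the band object case. An indecomposable perfect object of $\D^b(\Lambda)$ corresponds either to a finite graded arc (a string object) or to a graded closed curve (a band object), and the lemma's conclusion, that the object corresponds to a minimal graded \emph{arc}, includes ruling out the second possibility. Your stated reason for restricting to string objects (``the statement is phrased in terms of arcs'') is circular: being a string object is part of what must be proved, not a hypothesis you may impose. As written, your proposal only proves the weaker statement that a perfect string object in $\U$ corresponds to a minimal arc, and it would fail to exclude a band object from lying in $\U$.

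The missing step is short but substantive, and the paper supplies it using the very proposition you already invoke: if $X$ were a band object, then $\tau X \cong X$ (band objects are $\tau$-invariant), hence
\[
\Hom(\tau X, X) \cong \Hom(X,X) \neq 0,
\]
contradicting \cref{prop: tau^-1 rigid}. This forces $X$ to be a string object corresponding to a finite graded arc, after which your contradiction argument applies verbatim. With that one observation inserted at the start, your proof is complete and coincides with the paper's.
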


\begin{proof}
Let $X$ be a perfect object that is contained in $\U$. If $X$ is a band object, then $\tau X\cong X$, which contradicts \cref{prop: tau^-1 rigid}. As $X$ is perfect, this implies that $\gamma_X$ is a finite graded arc. Our statement now follows by combining \cref{lem:exclude-bridging-arcs} and \cref{prop: tau^-1 rigid}.
\end{proof}

We next investigate how the gradings of minimal arcs giving rise to objects in a $d$-cluster tilting subcategory are related. Let $\Lambda$ be a gentle algebra and recall that $L$ denotes the associated dual graph as explained in \cref{sec: geometric model}.

Consider two graded arcs $(\gamma_X, f_X)$ and $(\gamma_Y, f_Y)$ with a common endpoint $m$, as indicated in \cref{fig:minimal help}. Let $p_X$ (resp.\ $p_Y$) denote the intersection point of $\gamma_X$ (resp.\ $\gamma_Y$) and $L$ that is closest to $m$. We say that $\gamma_X$ and $\gamma_Y$ have \emph{compatible grading in $m$} if $f_X(p_X) = f_Y(p_Y)$. Note that by the description of morphisms in the bounded derived category in terms of graded intersections, this implies that $\Hom(Y,X) \neq 0$. Similarly, we say that the two arcs have \emph{$d$-compatible grading in $m$} if $f_X(p_X) \equiv f_Y(p_Y) \pmod{d}$. 

\begin{figure}[hbt]
    \centering
    \begin{tikzpicture}
        \draw (-1,0) -- (9,0);
        %Red graph
        \filldraw[red] (2,0) circle[radius=.1];
        \filldraw[red] (6,0) circle[radius=.1, fill=red];
        \draw[red] (2,0) -- node[right, pos=0.35]{$p_X$} (3,2);
        \draw[red] (6,0) --node[left, pos=0.35]{$p_Y$} (5.5,2);
        %Marked points
        \filldraw (4,0) circle[radius=.1, fill=black];
        %Petals
        \draw (4,0) to node[above, pos=.8] {$\gamma_X$} (1,1);
        \draw[dashed] (1,1) -- (-.5, 1.5);
        \draw (4,0) to node[above, pos=.8] {$\gamma_Y$}(7,1);
        \draw[dashed] (7,1) -- (8.5, 1.5);
        \node[below] at (4,0) {$m$};
    \end{tikzpicture}
    \caption{Illustration of compatible grading. The graded arcs $\gamma_X$ and $\gamma_Y$ have $d$-compatible grading in $m$ if $f_X(p_X) \equiv f_Y(p_Y) \pmod{d}$.}
    \label{fig:minimal help}
\end{figure}
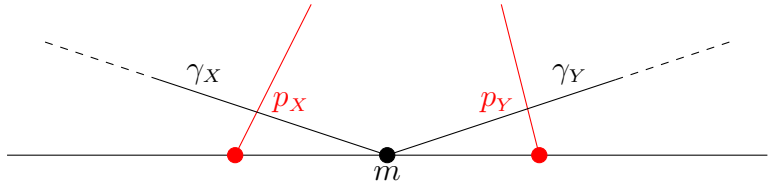

Consider an object $X$ in $\D^b(\Lambda)$ corresponding to a minimal graded arc. Let $\B_X$ be the boundary component containing the endpoints of $\gamma_X$. Denote the marked points on $\B_X$ by $m_1, m_2, \dots, m_t$, where $m_1$ and $m_t$ are the endpoints of $\gamma_X$. The marked point $m_{j+1}$ follows $m_j$, as illustrated in \cref{fig:naming minimal}. Use the notation $\gamma_1 = \gamma_X$ for the graded arc corresponding to $X_1 = X$. For $j = 2,\dots,t$, let $\gamma_j$ be the minimal graded arc with endpoints $m_{j-1}$ and $m_{j}$ for which $\gamma_{j-1}$ and $\gamma_j$ have compatible grading in $m_{j-1}$. Denote the object corresponding to $\gamma_j$ by $X_j$. We now define $\V_X^d$ to be the subcategory
\[
\V_X^d = \add\{X_j[di] \mid i \in \Z ~\text{  and  }~ 1 \leq j \leq t\} \subseteq \D^b(\Lambda).
\]

Note that as every indecomposable object in $\V_X^d$ corresponds to a minimal graded arc, we have $\V_X^d \subseteq \D^b(\proj \Lambda)$.

\begin{figure}[htb]
\begin{tikzpicture}[radius=5cm, scale=.6]
	\coordinate (dotted_start) at (350:5);
	\draw (dotted_start) arc[start angle= 350, delta angle= 270] node[pos=.17, above right] {$\B_X$};
	\draw[dashed] (dotted_start) arc[start angle=350, delta angle=-90];
	\coordinate (m3) at (220:5);
	\coordinate (m2) at (165:5);
	\coordinate (m1) at (120:5);
	\coordinate (mt) at (60:5);
	\coordinate (m-) at (10:5);
	\filldraw (m3) circle[radius=.1] node[left]{$m_3$};
	\filldraw (m2) circle[radius=.1] node[left]{$m_2$};
	\filldraw (m1) circle[radius=.1] node[above]{$m_1$};
	\filldraw (mt) circle[radius=.1] node[right]{$m_t$};
	\filldraw (m-) circle[radius=.1] node[right]{$m_{t-1}$};
	\draw (m3) to[bend right] node[right]{$\gamma_3$} (m2);
	\draw (m2) to[bend right] node[right]{$\gamma_2$} (m1);
	\draw (m1) to[bend right] node[below]{$\gamma_1=\gamma_X$} (mt);
	\draw (mt) to[bend right] node[left]{$\gamma_t$} (m-);
	\draw[dashed] (m3) to[bend left] (0,-4);
	\draw[dashed] (m-) to[bend right] (3,-3);
\end{tikzpicture}
    \caption{Minimal arcs sharing a boundary with $\gamma_X$.}
    \label{fig:naming minimal}
\end{figure}
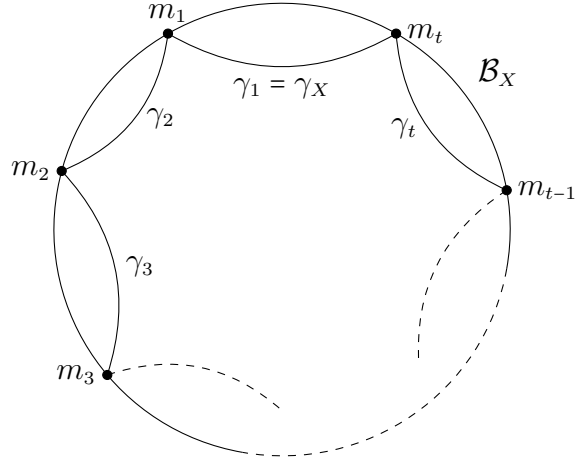

\begin{example}\label{ex:V_X^d}
In \cref{ex:d-CT} we considered the algebra $\Lambda\! =\! KA_3/J^2$, which is $2$-representation finite $2$-hereditary. Note that the $2$-cluster tilting subcategory $\U[2\Z] \subseteq \D^b(\Lambda)$ as illustrated in \cref{fig:derA3} can also be obtained as $\V_{X}^2$ for any indecomposable object $X\in\U[2\Z]$.
\end{example}

Our next two lemmas demonstrate that the notion of $d$-compatibility is useful to describe when indecomposable objects are contained in a $d$-cluster tilting subcategory. In \cref{lem:d-compatibility general} we consider the general case, before restricting to perfect objects in \cref{lem:only minimal}.

\begin{lemma}\label{lem:d-compatibility general}
Let $\Lambda$ be a gentle algebra and consider a $d$-cluster tilting subcategory $\U \subseteq \D^b(\Lambda)$ that is closed under $[d]$ for some $d \geq 2$. The following statements hold for an indecomposable object $X$ in $\U$: 
\begin{enumerate}
    \item $X[j]\in \U$ if and only if $d$ divides $j$.
    \item Let $Y$ correspond to a graded arc $\gamma_Y$ and assume that $\gamma_X$ and $\gamma_Y$ have a common endpoint $m$. If $Y \in \U$, then $\gamma_X$ and $\gamma_Y$ have $d$-compatible grading in $m$.
\end{enumerate}
\end{lemma}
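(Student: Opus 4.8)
The plan is to prove the two statements in turn, deducing part (2) from part (1) together with the geometric description of morphisms recalled before the lemma. Throughout I will use that a $d$-cluster tilting subcategory satisfies $\U = \Ud = \dU$, and that being closed under $[d]$ means $\U[d] = \U$, so that $[d]$ restricts to an autoequivalence of $\U$; consequently $X[dk] \in \U$ for every $X \in \U$ and every $k \in \Z$.

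For part (1), the implication $d \mid j \Rightarrow X[j] \in \U$ is immediate from this closure. For the converse I would first reduce modulo $d$: writing $j = dk + r$ with $0 \leq r \leq d-1$ and applying the autoequivalence $[-dk]$, the assumption $X[j] \in \U$ yields $X[r] \in \U$. It then suffices to rule out $1 \leq r \leq d-1$. For such $r$, I would test the membership $X[r] \in \U = \dU$ against the object $X \in \U$ itself in degree $i = r$; this forces $\Hom(X[r], X[r]) = 0$, which contradicts the fact that the identity of the nonzero object $X[r]$ is a nonzero morphism. Hence $r = 0$ and $d \mid j$.

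For part (2) the first step is to extract a concrete nonzero morphism from the geometric data. Since $\gamma_X$ and $\gamma_Y$ share the endpoint $m$, the gradings of $\gamma_Y$ form a $\Z$-torsor, so there is a unique shift $Y[k_0]$ of $Y$ whose grading agrees with that of $\gamma_X$ at the intersection points closest to $m$; concretely $k_0 = f_Y(p_Y) - f_X(p_X)$ up to the sign convention for $[1]$. By the criterion recalled just before the lemma (compatible grading in $m$ produces a nonzero morphism), this gives $\Hom(Y[k_0], X) \neq 0$. The key observation is that the desired $d$-compatibility $f_X(p_X) \equiv f_Y(p_Y) \pmod{d}$ is precisely the statement $d \mid k_0$, so it remains to prove this divisibility.

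To finish, I would rewrite the morphism as $\Hom(Y, X[-k_0]) \neq 0$ and reduce on the $X$-side: writing $-k_0 = dq + r$ with $0 \leq r \leq d-1$, part (1) gives $X[dq] \in \U$, so setting $X' = X[dq] \in \U$ we obtain $\Hom(Y, X'[r]) \neq 0$. If $1 \leq r \leq d-1$, then since $X' \in \U = \Ud$ and $Y \in \U$, the orthogonality $\Hom(Y, X'[r]) = 0$ holds, a contradiction. Hence $r = 0$ and $d \mid k_0$, which is exactly the claimed $d$-compatibility. The main obstacle I anticipate is bookkeeping rather than ideas: correctly matching the sign convention for the grading shift under $[1]$ so that $d$-compatibility translates precisely into $d \mid k_0$, and choosing the right side (here $X$) on which to reduce modulo $d$ so that the residual shift lands in the range $1 \leq i \leq d-1$ where the $d$-cluster tilting orthogonality applies.
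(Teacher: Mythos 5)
Your proposal is correct and takes essentially the same approach as the paper: part (1) is proved exactly as there (reduce $j$ modulo $d$ using closure under $[d]$, then use $\Hom(X[r],X[r])\neq 0$ to contradict $X[r]\in\dU=\U$), and part (2) likewise converts the shared endpoint into a nonzero morphism via the graded-intersection criterion and then applies the $d$-cluster tilting orthogonality to force the grading difference to be divisible by $d$. The one point to tighten is that at a boundary intersection the nonzero morphism may go in either direction depending on the configuration of the two arcs at $m$ --- the paper records this as $\Hom(X,Y[i])\neq 0$ \emph{or} $\Hom(Y[i],X)\neq 0$ --- so in the second case you must run your mod-$d$ reduction on the $Y$-side rather than the $X$-side; this is exactly the symmetric bookkeeping you anticipated, not a missing idea.
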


\begin{proof}
Notice that as the subcategory $\U$ is closed under $[d]$, it is also closed under $[id]$ for every integer $i$. It follows that if $d$ divides $j$, then $X[j]\in \U$. Suppose that $d$ does not divide $j$. We may assume without loss of generality that $1 \leq j \leq d-1$. As we have $\Hom(X[j], X[j])\neq 0$, this implies that $X[j]$ is not contained in $\dU=\U$, which proves the first statement.

For the second statement, denote by $p_X$ (resp.\ $p_Y$) the intersection point of $\gamma_X$ (resp.\ $\gamma_Y$) and $L$ that is closest to $m$. Assume that $\gamma_X$ and $\gamma_Y$ do not have $d$-compatible grading in $m$, i.e.\ that $~f_X(p_X) \nequiv f_Y(p_Y) \pmod{d}$. By \textit{(1)}, it is enough to consider the case where $f_X(p_X)=0$ and $f_Y(p_Y)=i$ for some $1\leq i\leq d-1$. It follows from the description of morphisms in terms of graded intersections that this yields $\Hom(X,Y[i])\neq 0$ or $\Hom(Y[i],X)\neq 0$. In either case, this implies that $Y$ is not contained in $\U$ by the definition of a $d$-cluster tilting subcategory.
\end{proof}

For perfect objects, there is a converse to part \textit{(2)} of the lemma above, as described in \cref{lem:only minimal} part \textit{(1)}. This yields a complete description of the perfect objects that are contained in a $d$-cluster tilting subcategory $\U$ that is closed under $[d]$. In particular, we see that the inclusion of one indecomposable perfect object $X$ in $\U$ fully determines what other perfect objects associated to the boundary $\B_X$ are contained in $\U$. Note that the object $X$ in the lemma below must correspond to a minimal graded arc by \cref{lem:all minimal}.

\begin{lemma}\label{lem:only minimal}
Let $\Lambda$ be a gentle algebra and consider a $d$-cluster tilting subcategory $\U \subseteq \D^b(\Lambda)$ that is closed under $[d]$ for some $d \geq 2$. The following statements hold for an indecomposable perfect object $X$ in $\U$:
\begin{enumerate}
    \item Let $Y$ correspond to a minimal graded arc $\gamma_Y$ that shares an endpoint $m$ with $\gamma_X$. Then $Y \in \U$ if and only if $\gamma_X$ and $\gamma_Y$ have $d$-compatible grading in $m$. 
    \item Let $Y$ correspond to a minimal graded arc with endpoints on the same boundary component as $\gamma_X$. Then $Y \in \U$ if and only if $Y \in \V_X^d$. In particular, we have $\V_X^d \subseteq \U$.
\end{enumerate}
\end{lemma}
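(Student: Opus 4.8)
The statement to prove is \cref{lem:only minimal}, consisting of two parts. My overall strategy is to prove part \emph{(1)} first as a self-contained local statement about two arcs sharing an endpoint, and then to bootstrap part \emph{(2)} from part \emph{(1)} by propagating compatibility around the boundary component $\B_X$ one marked point at a time. The forward direction of part \emph{(1)} is exactly the content of \cref{lem:d-compatibility general} part \emph{(2)}, so I only need to establish the converse: assuming $\gamma_X$ and $\gamma_Y$ have $d$-compatible grading in the common endpoint $m$, I must show $Y \in \U$. Since $\U = \Ud = \dU$, this amounts to checking that $\Hom(Y, \U[i]) = 0$ and $\Hom(\U[i], Y) = 0$ for all $1 \leq i \leq d-1$, which I would reduce to a computation about graded intersections of $\gamma_Y$ with the arcs of objects in $\U$.

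First I would set up part \emph{(1)}'s converse using the fact (from \cref{lem:all minimal}) that $X$ corresponds to a minimal graded arc, and that $Y$ is assumed minimal as well. The key observation is that because both arcs are minimal, they are homotopic to boundary segments, so their only intersections with other arcs occur near their shared endpoint $m$ and near their other endpoints. I would argue that any nonzero morphism between $Y[i]$ (for $1 \leq i \leq d-1$) and an object $Z \in \U$ would, via the graded-intersection description of $\Hom$ recalled in \cref{sec: geometric model}, produce an intersection with incompatible grading; by the $d$-compatibility hypothesis and the fact that $Z \in \U$ already satisfies $d$-compatibility with $X$ in every shared endpoint (forward direction of \cref{lem:d-compatibility general}), transitivity of the congruence $f_X(p_X) \equiv f_Y(p_Y) \equiv f_Z(p_Z) \pmod d$ forces the relevant gradings to coincide modulo $d$, contradicting the shift by $i$ with $1 \leq i \leq d-1$. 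This reduces the verification to a finite check at each endpoint of $\gamma_Y$, using that minimal arcs cut out at most one polygon of the dissection given by $L$.

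For part \emph{(2)}, I would proceed by induction along the enumeration $m_1, m_2, \dots, m_t$ of the marked points on $\B_X$ as set up before \cref{fig:naming minimal}. The base case $\gamma_1 = \gamma_X$ is in $\U$ by hypothesis. For the inductive step, I would use that $\gamma_j$ is \emph{defined} to have compatible (hence $d$-compatible) grading with $\gamma_{j-1}$ in the shared endpoint $m_{j-1}$; since $\gamma_{j-1}$ corresponds to $X_{j-1} \in \U$ by induction and is a minimal perfect arc, part \emph{(1)} immediately gives $X_j \in \U$. Running this around the whole boundary yields $X_j \in \U$ for all $j$, and closure of $\U$ under $[d]$ (combined with \cref{lem:d-compatibility general} part \emph{(1)} for the reverse inclusion) gives $\V_X^d \subseteq \U$. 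Conversely, if $Y$ is any minimal arc with both endpoints on $\B_X$, then $\gamma_Y$ must coincide with one of the $\gamma_j$ up to homotopy, so $Y \cong X_j[di]$ for some $j$ and $i$ by matching gradings via part \emph{(1)} and \cref{lem:d-compatibility general} part \emph{(1)}, whence $Y \in \V_X^d$.

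\textbf{Main obstacle.} The hard part will be the converse of part \emph{(1)}: ruling out \emph{all} nonzero morphisms $\Hom(Y, Z[i])$ and $\Hom(Z[i], Y)$ for $Z \in \U$ and $1 \leq i \leq d-1$, not merely those involving $X$ itself. The subtlety is that $Y$ shares one endpoint with $\gamma_X$ but its \emph{other} endpoint could meet arcs of objects in $\U$ whose compatibility with $Y$ is not directly controlled by the hypothesis at $m$. The resolution should be that minimality of $\gamma_Y$ constrains these other intersections tightly—a minimal arc is homotopic to a boundary segment, so any arc of an object $Z \in \U$ meeting $\gamma_Y$ near its second endpoint also meets $\gamma_X$ (or a translate $X_j$ already shown to lie in $\U$) with a controlled grading—and I would need to invoke the explicit trichotomy of intersection types (interior, boundary, puncture) listed after \cref{fig:intersections_puncture} to ensure the grading bookkeeping closes up modulo $d$ in every case.
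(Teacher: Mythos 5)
Your forward direction of part \emph{(1)} and your induction for part \emph{(2)} match the paper, and your idea of transitivity of the congruence $f_Y(p_Y) \equiv f_X(p_X) \equiv f_Z(p_Z) \pmod d$ at the shared endpoint $m$ is exactly the paper's mechanism. The genuine gap is in the converse of part \emph{(1)}, precisely at the point you flag as the main obstacle. You propose to verify \emph{both} conditions $\Hom(Y,\U[i])=0$ and $\Hom(\U[i],Y)=0$, and to handle intersections at the second endpoint of $\gamma_Y$ by claiming that any arc of an object $Z\in\U$ meeting $\gamma_Y$ there ``also meets $\gamma_X$ (or a translate $X_j$) with a controlled grading.'' That claim is false: an arc of $Z\in\U$ ending at the second endpoint of $\gamma_Y$ can run off to any other part of the surface without meeting $\gamma_X$ or any shift of it, and nothing in the hypothesis (which only concerns the grading at $m$) controls the grading of $\gamma_Z$ there relative to $f_Y$. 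Moreover, the translates $X_j$ are unavailable inside part \emph{(1)}: they are only shown to lie in $\U$ via part \emph{(2)}, which depends on part \emph{(1)}, so invoking them would be circular. As written, your reduction does not close up at the second endpoint.

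The paper's resolution is different and is the idea you are missing: since $\U=\dU=\Ud$, membership in $\U$ is equivalent to membership in just \emph{one} of the two perpendicular categories, and the orientation of the surface singles out which one is testable at $m$. Because $\gamma_Y$ is minimal, every arc meets it only at its endpoints, and all intersections at one endpoint yield morphisms \emph{out of} $Y$ while all intersections at the other endpoint yield morphisms \emph{into} $Y$ (this is the boundary case of the oriented-intersection dichotomy recalled in \cref{sec: geometric model}). After the WLOG of \cref{fig:minimal help}, a non-zero morphism $Y\to Z[i]$ with $Z\in\U$ can only come from an intersection at $m$; there, the hypothesis together with \cref{lem:d-compatibility general} part \emph{(2)} applied to $X,Z\in\U$ rules out such a morphism for $1\leq i\leq d-1$, giving $Y\in\dU=\U$. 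The second endpoint never needs to be examined: the incoming morphisms there are automatically consistent once $Y\in\dU$, because $\dU=\Ud$. (The paper phrases this contrapositively: if $Y\notin\U=\dU$, the offending morphism $Y\to Z[i]$ lives at $m$ and forces $f_Y$ and $f_X$ to be $d$-incompatible there.) Your argument becomes correct once you replace the two-sided verification by this one-sided one; the rest of your outline, including part \emph{(2)}, then goes through as you describe.
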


\begin{proof}
If $Y\in\U$, the arcs $\gamma_X$ and $\gamma_Y$ have $d$-compatible grading in $m$ by part \textit{(2)} of  \cref{lem:d-compatibility general}. 
For the reverse direction, assume that $Y \notin \U$. As $\U=\dU$, this means that there is an indecomposable object $Z \in \U$ such that $\Hom(Y,Z[i]) \neq 0$ for some $1 \leq i \leq d-1$. Without loss of generality we can assume the intersection of $\gamma_X$ and $\gamma_Y$ in $m$ to be of the type illustrated in \cref{fig:minimal help}, as the proof in the case where the position of the two arcs is interchanged is dual. Consequently, using that $\gamma_Y$ is minimal, a non-zero morphism from $Y$ to $Z[i]$ must correspond to a graded intersection of $\gamma_{Y}$ and $\gamma_{Z[i]}$ in the endpoint $m$. In particular, this implies that the arcs $\gamma_{Y}$ and $\gamma_{Z}$ do not have $d$-compatible grading in $m$. On the other hand, the gradings of $\gamma_X$ and $\gamma_Z$ are $d$-compatible in $m$ by \cref{lem:d-compatibility general} part \textit{(2)}, as $X$ and $Z$ are in $\U$. Combining this, we see that $\gamma_X$ and $\gamma_Y$ do not have $d$-compatible grading in $m$.

The second statement  follows directly.
\end{proof}

Using the description in \cref{lem:only minimal}, we are now able to prove that the only examples of $d$-cluster tilting subcategories of the perfect derived category that are closed under $[d]$ arise in the type $A$ case. This result provides an important step in the proof of \cref{thm:main result}, where we show that the analogue statement holds for $\D^b(\Lambda)$.

\begin{remark}
Note that \cref{prop: tau^-1 rigid}, \cref{lem:all minimal}, \cref{lem:d-compatibility general} and  \cref{lem:only minimal} hold also when considering a $d$-cluster tilting subcategory $\U \subseteq \D^b(\proj\Lambda)$. These results can hence be applied in the setup of \cref{thm:nothing in finite glob dim}.
\end{remark}

\begin{proposition}\label{thm:nothing in finite glob dim}
Let $\Lambda$ be a gentle algebra. If $\D^b(\proj\Lambda)$ contains a $d$-cluster tilting subcategory that is closed under $[d]$ for some $d\geq 2$, then $\Lambda$ is derived equivalent to an algebra of Dynkin type $A$.
\end{proposition}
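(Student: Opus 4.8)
The plan is to use the geometric model to show that the surface $(S,M)$ associated to $\Lambda$ must be a disk with no punctures, since $\Lambda$ is derived equivalent to an algebra of Dynkin type $A$ precisely when its surface is a disk. First I would observe that since $\Lambda$ has finite global dimension (as $\D^b(\proj\Lambda) \simeq \D^b(\Lambda)$ when there is a $d$-cluster tilting subcategory of $\D^b(\proj\Lambda)$, and more fundamentally since the perfect objects are exactly the finite graded arcs), the surface has no punctures. It therefore suffices to rule out any boundary component carrying more than the minimal number of marked points, and to control the number of boundary components, so that the only surviving surface is a disk.

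The key mechanism will be \cref{lem:only minimal}. Fix an indecomposable perfect object $X$ in the $d$-cluster tilting subcategory $\U$; by \cref{lem:all minimal} it corresponds to a minimal graded arc, whose two endpoints lie on a single boundary component $\B_X$. Part \textit{(2)} of \cref{lem:only minimal} then tells us exactly which minimal arcs on $\B_X$ belong to $\U$, namely those in $\V_X^d$. The heart of the argument is a counting/compatibility constraint: travelling once around $\B_X$ through the chain $\gamma_1, \gamma_2, \dots, \gamma_t$ of consecutive minimal arcs, the grading shifts accumulate, and for the construction of $\V_X^d$ to be consistent (so that returning to $\gamma_1$ reproduces the original arc up to a shift by a multiple of $d$), the number of marked points $t$ on $\B_X$ must satisfy a divisibility condition modulo $d$. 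I would make this precise by tracking how $f_X(p_X)$ changes as one passes from $\gamma_j$ to $\gamma_{j+1}$ using the grading rule from \cref{sec: geometric model}, showing that the total shift around the boundary is governed by the Euler characteristic contribution of that boundary component. This forces strong restrictions on the surface.

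The next step is to leverage the rigidity already encoded in \cref{prop: tau^-1 rigid}: any indecomposable perfect object in $\U$ satisfies $\Hom(\tau X, X) = 0$, and by \cref{lem:exclude-bridging-arcs} this is incompatible with $X$ corresponding to a non-minimal arc. If the surface were anything other than a disk --- for instance if it had genus, or multiple boundary components, or a boundary component with many marked points --- one could produce an indecomposable perfect object that is forced into $\U$ (via the approximation/generating property of a $d$-cluster tilting subcategory, which must contain objects touching every part of the surface) but whose arc is non-minimal, yielding a contradiction with \cref{lem:all minimal}. Concretely, I would argue that a $d$-cluster tilting subcategory must be large enough to detect, through non-vanishing $\Hom$ or $\Ext$ groups, every edge of the dual graph $L$; combined with the constraint that all perfect indecomposables in $\U$ are minimal arcs, this pins down the combinatorics of $(S,M)$ to that of the disk dissection appearing in \cref{ex:quiver from surface}.

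The main obstacle I anticipate is the global combinatorial bookkeeping needed to pass from the local statement of \cref{lem:only minimal} --- which only controls minimal arcs sharing an endpoint or a boundary with a single chosen $X$ --- to a genuinely global conclusion about the entire surface. In particular, one must argue that the whole surface is \emph{covered} by the minimal arcs associated to boundary components, and that no "hidden" topology (handles, extra boundary components) can coexist with the existence of $\U$. Ruling out higher genus and additional boundary components is the delicate part, since these do not immediately interfere with the local minimal-arc analysis; here I expect to need either an Euler-characteristic argument comparing the number of vertices, edges, and polygons of the dissection, or an explicit construction of a non-minimal perfect arc whose $\tau$-translate has a self-map, contradicting \cref{prop: tau^-1 rigid}. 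Establishing that such an obstructing arc always exists outside the disk case is where the real work lies.
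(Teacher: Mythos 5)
Your high-level strategy is the same as the paper's: invoke \cite{OPS}*{Corollary 1.23} to reduce to showing the surface is a disk, use \cref{lem:all minimal} and \cref{lem:only minimal} to see that all indecomposable perfect objects of $\U$ are minimal arcs organised into $\V_X^d$, and then derive a contradiction by exhibiting a \emph{non-minimal} perfect object that is forced to lie in $\U$. However, the proposal has a genuine gap precisely at the step you yourself flag as ``where the real work lies'': you never construct the obstructing object, and the mechanism you suggest for forcing it into $\U$ is wrong. A $d$-cluster tilting subcategory of a triangulated category has no ``generating'' property that makes it ``touch every part of the surface''; generation/cogeneration is an abelian-category condition, and functorial finiteness does not force $\U$ to spread over the surface. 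The correct forcing mechanism is maximality: since $\U = \Ud$, any object $Y$ with $\Hom(\U, Y[j]) = 0$ for $1 \leq j \leq d-1$ \emph{must} belong to $\U$. The paper exploits this as follows. Because the surface is not a disk, there is a non-contractible loop $\gamma$ based at an endpoint $m$ of the minimal arc $\gamma_X$. Concatenating $d$ copies of $\gamma$ with $\gamma_X$ produces a finite arc $\gamma_d$ with $d$ self-intersections; each loop shifts the grading by a fixed integer $w$, so the grading at the far endpoint of $\gamma_d$ differs from that of $\gamma_X$ by $dw \equiv 0 \pmod d$. Since $\gamma_d$ meets the minimal arcs only at its endpoints and its endpoint gradings are $d$-compatible with $\V_X^d$, one gets $\Hom(\U, Y_d[j]) = \Hom(\V_X^d, Y_d[j]) = 0$ for $1 \leq j \leq d-1$, hence $Y_d \in \Ud = \U$, contradicting \cref{lem:all minimal}. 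Without this (or an equivalent) construction, your argument does not close: \cref{lem:exclude-bridging-arcs} tells you every non-minimal finite arc has $\Hom(\tau Y, Y) \neq 0$, but that is useless unless you can show some non-minimal object actually lies in $\U$.

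Two further points. First, your boundary-counting idea (accumulating grading shifts around $\B_X$ to get a divisibility condition on the number of marked points) is the right tool for the type-$A$ classification in \cref{prop:type A}, but it cannot prove the present proposition: a congruence on marked points per boundary component says nothing about genus or about the number of boundary components, which is exactly the topology you must exclude. Second, your opening claim that the surface has no punctures because ``$\Lambda$ has finite global dimension'' is unjustified --- nothing in the hypothesis gives finite global dimension, and the conclusion would be circular. It is also unnecessary: the paper's proof works entirely inside $\D^b(\proj\Lambda)$ with finite arcs, regardless of whether punctures exist.
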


\begin{proof}
Assume that $\Lambda$ is not derived equivalent to an algebra of Dynkin type $A$. By \cite{OPS}*{Corollary 1.23}, this means that the surface in the geometric model of $\D^b(\Lambda)$ is not a disk.

Assume towards a contradiction that there exists a $d$-cluster tilting subcategory $\U$ of $\D^b(\proj\Lambda)$ that is closed under $[d]$ for some $d \geq 2$. Let $X$ be an indecomposable object in $\U$, which by \cref{lem:all minimal} corresponds to a minimal graded arc $(\gamma_X,f_X)$ as $X$ is perfect. The boundary component containing the endpoints of $\gamma_X$ is denoted by $\B_X$. \cref{lem:only minimal} part \textit{(2)} yields that an object corresponding to a minimal graded arc with endpoints on $\B_X$ is contained in $\U$ if and only if it is in $\V_X^d$.

Denote one of the endpoints of $\gamma_X$ by $m$. As the surface in the geometric model associated to $\Lambda$ is not a disk, there exists an arc $\gamma$ starting and ending in $m$ that is not contractible to a point.  For simplicity, we assume that $\gamma$ has no self-intersections except in the endpoint. Let $\gamma_i$ be the arc obtained by concatenating $i$ copies of $\gamma$ and $\gamma_X$ in such a way that $\gamma_i$ has $i$ self-intersections. Note that when choosing a representative of $\gamma_i$, it is helpful to choose one such that all the self-intersections are separated and occur before the concatenation with $\gamma_X$. See \cref{fig:bandlike_object} for an illustration of $\gamma_2$.

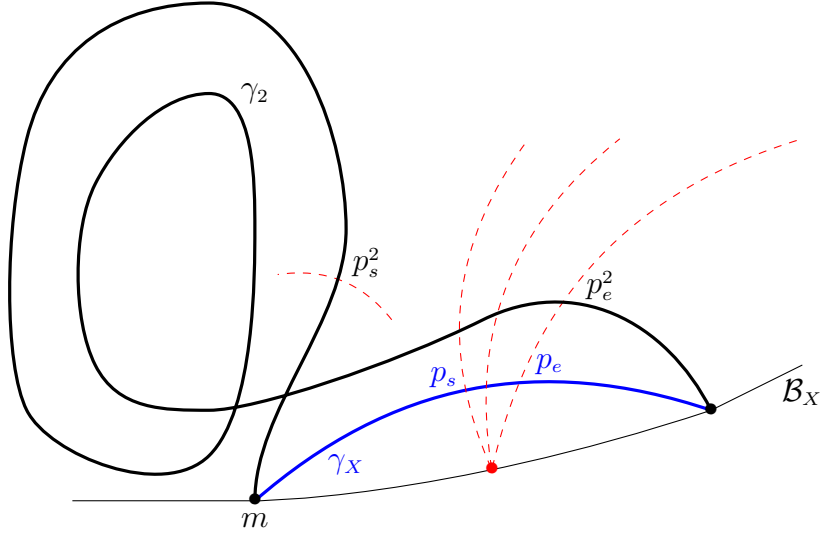
\begin{figure}[hbt]
    \centering
    \begin{tikzpicture}[inner sep=0, scale=.6, radius=8cm]
    \coordinate (m) at (0,-8);
    \coordinate (m2) at (10,-6);
    \draw (-4,-8)--(m) .. controls(0,-8) and (4,-8).. node[pos=.7] (l) {}  (m2)--(12, -5);
    %Petal(s)
    \begin{scope}[petal]
    \draw (m) to[bend left] (m2);
    \end{scope}
    %laminates
    \draw[laminate, dashed] (l) to[bend left] node[pos=0.22, right, inner sep=3,color=blue] {$p_e$} node[pos=0.42, right, inner sep=4,color=black] {$p_e^2$} (12,0);
    \draw[laminate, dashed] (l) to[bend left] (8,0);
    \draw[laminate, dashed] (l) to[bend left] node[pos=0.26, left, inner sep=2,color=blue] {$p_s$}(6,0);
    \draw[laminate, dashed] (3,-4) to[bend right] node[pos=0.45, above right,color=black, inner sep=2] {$p_s^2$} (.5,-3);
    %Fancy curve
    \begin{scope}[arc]
    \draw (m) ..controls(0,-6) and (2,-4).. 
                (2,-2) ..controls (2,0) and(1,3)..
                (-1,3) ..controls (-3,3) and(-4.5,2)..
                (-5,0) ..controls (-5.5,-2) and(-5.5,-5)..
                (-5,-6) ..controls (-4.5,-7) and(-2, -8)..
                (-1,-7) ..controls (0,-6) and(0,-3)..
                (0,-2) ..controls (-0,-1) and(0,1)..
                (-1,1) ..controls (-2,1) and(-3,0)..
                (-3.5,-1) ..controls (-4,-2) and(-4,-4)..
                (-3.5,-5) ..controls (-3,-6) and(-2,-6)..
                (-1,-6) ..controls(0,-6) and (3,-5)..
                (5,-4) ..controls(7,-3) and (9,-4).. 
                (m2);
    \node[petal] at (2,-7.2) {$\gamma_X$};
    \node[arc] at (0,1) {$\gamma_2$};
    \node[arc] at (12,-5.6) {$\B_X$};
    %Marked points need to come last, to be on top.
    \node (naked) at (m) {$\bullet$};
    \node (next) at (m2) {$\bullet$};
    \node[laminate] (red) at (l) {$\bullet$};
    \node[anchor=north, inner sep=4] at (m) {$m$};     
    \end{scope}
    \end{tikzpicture}
    \caption{The arc $\gamma_i$ for $i=2$, as used in the proof of \cref{thm:nothing in finite glob dim}.}
    \label{fig:bandlike_object}
\end{figure}

Denote by $p_s$ (resp.\ $p_s^i$) the intersection point of $\gamma_X$ (resp.\ $\gamma_i$) and $L$ that is closest to $m$, as indicated in \cref{fig:bandlike_object}. Similarly, we use the notation $p_e$ (resp.\ $p_e^i$) for the intersection point of $\gamma_X$ (resp.\ $\gamma_i$) and $L$ that is closest to the second endpoint of $\gamma_X$. Note that $p_s$ and $p_e$ might coincide. Let $f_i$ be the grading of $\gamma_i$ for which $f_i(p^i_s)=f_X(p_s)$. Denote by $Y_i$ the indecomposable object in $\D^b(\proj\Lambda)$ corresponding to the finite graded arc $(\gamma_i,f_i)$.

Notice that for each loop of $\gamma_i$, the grading increases by an integer $w$. As the last segment in the construction of $\gamma_i$ follows the trajectory of $\gamma_X$, this yields $f_i(p_e^i)=f_X(p_e)+iw$. Choosing $i=d$, we hence obtain $f_d(p_e^d)\equiv f_X(p_e)\pmod d$. By construction, the arc $\gamma_d$ does not intersect any minimal arcs except in the endpoints. This implies that
\[
\Hom(\U,Y_d[j]) = \Hom(\V_X^d,Y_d[j]) = 0
\]
for $1 \leq j \leq d-1$.
Consequently, the object $Y_d$ is contained in $\Ud = \U$. This contradicts \cref{lem:all minimal} as $\gamma_d$ is not a minimal arc.
\end{proof}

Our next aim is to show that the same conclusion as in \cref{thm:nothing in finite glob dim} holds when working with the entire bounded derived category. We hence need to consider objects corresponding to graded arcs wrapping around punctures on one or both ends. To determine whether such objects can be contained in a $d$-cluster tilting subcategory, the following lemma is useful.

\begin{lemma}\label{lem:no crossings}
Let $\Lambda$ be a gentle algebra and consider a $d$-cluster tilting subcategory $\U \subseteq \D^b(\Lambda)$ that is closed under $[d]$ for some $d \geq 2$. If $X$ and $Y$ are two indecomposable objects in $\U$, then the corresponding graded arcs $\gamma_X$ and $\gamma_Y$ do not intersect in the interior of the surface in the geometric model.
\end{lemma}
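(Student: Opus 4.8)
The plan is to argue by contradiction using the correspondence between morphisms in $\D^b(\Lambda)$ and oriented graded intersections. Suppose that $\gamma_X$ and $\gamma_Y$ intersect in an interior point $p$ of the surface. By the description of interior intersection points recalled in \cref{sec: geometric model}, the fixed grading $f_X$ of $\gamma_X$ determines a unique grading $g$ of $\gamma_Y$ such that $p$ yields an oriented graded intersection from $(\gamma_X,f_X)$ to $(\gamma_Y,g)$ and simultaneously an oriented graded intersection from $(\gamma_Y,g)$ to $(\gamma_X,f_X[1])$. Since $g$ and $f_Y$ are gradings of the same underlying arc $\gamma_Y$, they differ by a global shift, so the object corresponding to $(\gamma_Y,g)$ is isomorphic to $Y[k]$ for a unique integer $k$. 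The two oriented graded intersections therefore translate into
\[
\Hom(X,Y[k]) \neq 0 \qquad\text{and}\qquad \Hom(Y[k],X[1]) \neq 0.
\]

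First I would pin down $k$ modulo $d$ using the first of these morphisms. Write $k = k_0 + r$ with $d \mid k_0$ and $0 \le r \le d-1$. Since $\U$ is closed under $[d]$, \cref{lem:d-compatibility general} part \textit{(1)} gives $Y[k_0] \in \U$, whence $Y[k] \cong (Y[k_0])[r]$ lies in $\U[r]$. If $1 \le r \le d-1$, then $\Hom(X,Y[k]) \neq 0$ contradicts $X \in \U = \dU$, since $\dU$ consists of objects $Z$ with $\Hom(Z,\U[i]) = 0$ for $1 \le i \le d-1$. Hence $r = 0$, that is, $d \mid k$, and consequently $Y[k] \in \U$.

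It then remains to exploit the second morphism. As $d \ge 2$, the shift $i = 1$ satisfies $1 \le i \le d-1$, so the condition $X \in \U = \Ud$ forces $\Hom(\U, X[1]) = 0$. In particular $\Hom(Y[k],X[1]) = 0$ because $Y[k] \in \U$, contradicting $\Hom(Y[k],X[1]) \neq 0$. This contradiction shows that $\gamma_X$ and $\gamma_Y$ cannot intersect in the interior of the surface.

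I expect the only genuinely delicate point to be the translation step: one must use that an interior intersection produces, for the single grading $g$, \emph{both} an oriented graded intersection into $(\gamma_Y,g)$ and one out of it into $(\gamma_X,f_X[1])$, with the suspension $[1]$ appearing precisely because $p$ is interior rather than on the boundary or at a puncture. This built-in shift of $1$ is exactly what conflicts with the gap of length $d-1$ imposed by the $d$-cluster tilting condition, and it is the geometric shadow of the partial Serre functor $\tau[1]$ already exploited in \cref{prop: tau^-1 rigid}.
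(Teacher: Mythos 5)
Your proof is correct and takes essentially the same approach as the paper: both arguments use the $d$-cluster tilting vanishing conditions together with closure under shifts (\cref{lem:d-compatibility general} part \textit{(1)}) to align the gradings of $\gamma_X$ and $\gamma_Y$ modulo $d$, and then exploit the fact that an interior crossing yields oriented graded intersections in both directions with a built-in shift of $[1]$, which is incompatible with the gap $1 \leq i \leq d-1$ since $d \geq 2$. The paper phrases this by normalizing gradings inside the polygon of $L$ containing the crossing and deriving $\Hom(X,Y[1]) \neq 0$, while you invoke the summarized description of interior intersections and derive $\Hom(Y[k],X[1]) \neq 0$; the underlying logic is identical.
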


\begin{proof}
Let $X$ and $Y$ be in $\U$ and assume that $\gamma_X$ and $\gamma_Y$ do intersect in the interior of the surface in the geometric model. This intersection lies in a polygon $P$ of the dissection given by the dual graph $L$ as illustrated in \cref{fig:graded internal intersection}. Denote intersection points of $\gamma_X$ and $\gamma_Y$ with the edges of $P$ as indicated in the figure. Using that $\Hom(Y,X[i])=0$ for $1 \leq i \leq d-1$ and applying \cref{lem:d-compatibility general} part \textit{(1)}, we can assume $f_X(p_X)=f_Y(p_Y)$. As $f_Y(p'_Y)=f_Y(p_Y)+1$, this implies that $\Hom(X,Y[1])\neq 0$, which yields a contradiction.
\end{proof}

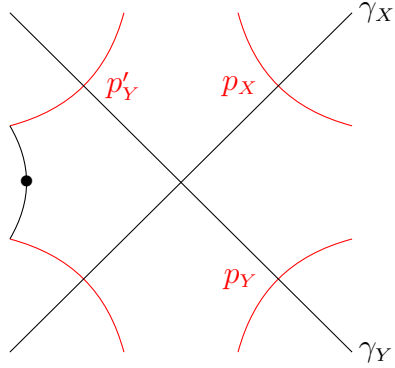
\begin{figure}[thb]
    \centering
    
    \pgfmathsetmacro{\dist}{3}
    \pgfmathsetmacro{\nodedisp}{5pt}
    \begin{tikzpicture}[scale=1.5, circle, inner sep=1pt]
	    \coordinate (bottomleft) at (0,0);
	    \coordinate (topleft) at (0,\dist);
	    \coordinate (bottomright) at (\dist,0);
	    \coordinate (topright) at (\dist,\dist);
	
	    %Dual graph
	    \path[laminate] (bottomleft) edge[white] +(0,1) +(0,1)  edge [laminate, bend left] node[right=\nodedisp] {} +(1,0) coordinate (bottomlefttop);
	    \path[laminate] (topleft) edge[white] +(0,-1) +(0,-1)  edge [laminate, bend right] node[right=\nodedisp] {$p_Y'$} +(1,0) coordinate (topleftbottom);
	    \path[laminate] (bottomright) edge[white] +(0,1)	+(0,1) edge [laminate, bend right] node[left=\nodedisp] {$p_Y$} +(-1,0) coordinate (bottomrighttop);
	    \path[laminate]	(topright) edge[white] +(0,-1) +(0,-1)  edge [laminate, bend left] node[left=\nodedisp] {$p_X$} +(-1,0) coordinate (toprightbottom);
	    \draw[bend right] (bottomlefttop) to node {$\bullet$} (topleftbottom);
	
	    %Curves
	    \draw (bottomleft)-- node[right=\nodedisp] {} node[at end, right] {$\gamma_X$} (topright);
	    \draw (topleft)-- node[at end, right] {$\gamma_Y$} (bottomright);
	\end{tikzpicture}
	
    \caption{The oriented graded intersection discussed in the proof of \cref{lem:no crossings}.}
    \label{fig:graded internal intersection}
\end{figure}

We are now ready to prove \cref{thm:main result}.

\begin{theorem}\label{thm:main result}
Let $\Lambda$ be a gentle algebra. If $\D^b(\Lambda)$ contains a $d$-cluster tilting subcategory that is closed under $[d]$ for some $d\geq 2$, then $\Lambda$ is derived equivalent to an algebra of Dynkin type $A$.
\end{theorem}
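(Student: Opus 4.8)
The plan is to reduce the statement to \cref{thm:nothing in finite glob dim} by showing that \emph{every} indecomposable object of $\U$ is perfect, so that $\U \subseteq \D^b(\proj\Lambda)$. Granting this, one checks that $\U$ is a $d$-cluster tilting subcategory of $\D^b(\proj\Lambda)$ closed under $[d]$: since all objects of $\U$ are perfect, for a perfect object $Y$ the vanishing $\Hom(\U,Y[i])=0$ holds in $\D^b(\proj\Lambda)$ if and only if it holds in $\D^b(\Lambda)$, so $\Ud$ and $\dU$ computed in $\D^b(\proj\Lambda)$ are just their counterparts in $\D^b(\Lambda)$ intersected with $\D^b(\proj\Lambda)$, and hence both equal $\U$; moreover right and left $\U$-approximations of perfect objects automatically have perfect source, so functorial finiteness is inherited. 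Then \cref{thm:nothing in finite glob dim} immediately yields that $\Lambda$ is derived equivalent to an algebra of Dynkin type $A$. If $\Lambda$ has finite global dimension this reduction is vacuous, since $\D^b(\proj\Lambda) = \D^b(\Lambda)$ and the associated surface has no punctures. The content is therefore concentrated in the case of infinite global dimension, where the surface carries at least one puncture and $\U$ could a priori contain non-perfect objects, namely those corresponding to graded arcs wrapping around a puncture.

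So the crux is to prove that no indecomposable object of $\U$ corresponds to an arc wrapping a puncture. I would argue by contradiction, assuming $X \in \U$ with $\gamma_X$ wrapping a puncture $p$, and then manufacture a non-minimal perfect object forced to lie in $\U$, contradicting \cref{lem:all minimal}. The construction mirrors the band-like arc built in the proof of \cref{thm:nothing in finite glob dim}, but now the winding is taken around the actual puncture $p$: following the spiral of $\gamma_X$ into $p$ for $d$ full loops and then exiting alongside $\gamma_X$ back to its marked endpoint produces a \emph{finite} graded arc $(\gamma_{(d)}, f_{(d)})$, hence a perfect object $Y_d$. As before, each loop shifts the grading by the winding number $w$ at $p$, so after $d$ loops the gradings satisfy $f_{(d)} \equiv f_X \pmod d$ at the shared endpoint; crucially this works for every $w$, including when $d \mid w$, because we only use $dw \equiv 0 \pmod d$.

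The main obstacle is to verify that $Y_d \in \Ud = \U$, that is, that $\gamma_{(d)}$ has no oriented graded intersection with the arc of any $\U$-object in degrees $1, \dots, d-1$. Here \cref{lem:no crossings} is decisive: any two objects of $\U$ have non-crossing arcs in the interior of the surface, so every other $\U$-object whose arc spirals into $p$ must spiral \emph{parallel} to $\gamma_X$, while a $\U$-arc not approaching $p$ cannot cross $\gamma_X$ at all. Choosing $\gamma_{(d)}$ to run alongside $\gamma_X$ throughout, both along the spiral and along the exit, it inherits this non-crossing property, so the only possible morphisms between $Y_d$ and objects of $\U$ arise at shared marked endpoints. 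These are controlled exactly as in \cref{lem:d-compatibility general}\,\textit{(2)}: the $d$-compatibility of $\gamma_{(d)}$ with $\gamma_X$ at the common endpoint, together with the $d$-compatibility of every other $\U$-arc with $\gamma_X$ there, forces $d$-compatibility of $\gamma_{(d)}$ with all of them, ruling out morphisms in degrees $1,\dots,d-1$. Hence $Y_d \in \U$, yet $\gamma_{(d)}$ is visibly not minimal, contradicting \cref{lem:all minimal}. I expect the delicate point to be the careful isotopy of $\gamma_{(d)}$ near $p$ ensuring that no new interior intersections with the (possibly infinitely many) parallel spiralling $\U$-arcs are introduced, and the precise bookkeeping of the grading across the $d$ loops.
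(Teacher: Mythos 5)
Your overall frame is sound and agrees with the paper: transferring $\U$ into $\D^b(\proj\Lambda)$ once all its objects are known to be perfect, invoking \cref{thm:nothing in finite glob dim}, and aiming the contradiction at \cref{lem:all minimal} is exactly how the paper proceeds. But your case analysis is already incomplete. ``Wrapping a puncture'' also includes arcs that wrap punctures at \emph{both} ends; these have no marked endpoint, so your construction, which must exit ``back to its marked endpoint'', says nothing about them, and neither \cref{prop: tau^-1 rigid} nor AR-translates are available for such non-perfect objects. The paper needs a separate preliminary step precisely here: assuming every non-perfect object of $\U$ wraps at both ends, it builds a finite arc following a boundary component (doubly-wrapping arcs never reach the boundary, and minimal arcs are met only at endpoints) to force a non-minimal perfect object into $\U$; only after this may one assume $\U$ contains an arc wrapping at exactly one end.

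The more serious problem is that the arc $\gamma_{(d)}$ you describe cannot exist. Near the puncture, the complement of the infinite spiral $\gamma_X$ is a simply connected corridor. An arc that enters this corridor alongside $\gamma_X$, winds $d$ times, and exits alongside $\gamma_X$ ``without crossing it'' is a U-turn inside the corridor: it is homotopic relative to its endpoints to an arc with zero net winding around the puncture, so in minimal position the $d$ loops, and with them the claimed grading shift $dw$, disappear, and no non-minimal object is produced. To gain net winding one must switch corridor walls: cutting across the corridor along a dual-graph edge gains exactly one level without touching $\gamma_X$ (this is the paper's construction, and it is precisely why the paper winds only \emph{once} and needs $d \mid w$, deduced from the self-morphisms of $X$ at the puncture --- the very step your multi-loop trick is designed to discard), whereas gaining any further level forces a transversal crossing with $\gamma_X$. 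Such a crossing is fatal: an interior intersection of $\gamma_{(d)}$ with $\gamma_X$ produces morphisms $X \to Y_d[a]$ and $Y_d \to X[b]$ with $a+b=1$, so one of $a,b$ is non-zero modulo $d$ and $Y_d \notin \Ud = \dU = \U$ (the computation behind \cref{lem:no crossings}); hence no contradiction with \cref{lem:all minimal} can ever be extracted from your arc. Finally, even the one-loop version in general crosses the strands of the other $\U$-arcs spiralling into the same puncture, since by \cref{lem:no crossings} those strands interleave with $\gamma_X$ inside the corridor and the cut-across segment must pass through them. This is not a matter of ``careful isotopy'': the crossings are unavoidable, and showing that they give no morphisms in degrees $1,\dots,d-1$ --- using that near the puncture the gradings of all spiralling $\U$-arcs agree with $f_X$ modulo $d$ --- is the key technical step of the paper's proof, which your proposal asserts can be avoided.
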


\begin{proof}

If $\Lambda$ has finite global dimension, then the bounded derived category coincides with the perfect derived category and the result follows from \cref{thm:nothing in finite glob dim}. We hence assume that $\Lambda$ has infinite global dimension, which is equivalent to the existence of at least one puncture in the surface of the associated geometric model. The strategy from here is to show that if there exists a $d$-cluster tilting subcategory $\U \subseteq \D^b(\Lambda)$ that is closed under $[d]$, then $\U$ contains a certain non-perfect object $X$. This enables us to construct a perfect object $Z$  that must be contained in $\U$ but does not correspond to a minimal graded arc. Similarly as in the proof of \cref{thm:nothing in finite glob dim}, this leads to a contradiction.

So suppose there exists a $d$-cluster tilting subcategory $\U\subseteq\D^b(\Lambda)$ that is closed under $[d]$. We claim that $\U$ must contain an object $X$ corresponding to a graded arc that starts in a marked point and ends wrapping around a puncture. To see this, assume to the contrary that each indecomposable object in $\U$ is either perfect or corresponds to a graded arc wrapping around punctures on both ends. Let $\B$ denote a boundary component with at least one marked point, and consider the arc $\gamma$ that starts and ends in this marked point and follows the boundary $\B$ up to homotopy. Note that by our assumptions, this arc is not contractible to a point. Applying the iterative construction from the proof of \cref{thm:nothing in finite glob dim} to $\gamma$, we obtain a finite arc $\gamma_d$. This arc can be equipped with a grading that is $d$-compatible with the grading of any minimal graded arc that corresponds to an object in $\U$ and starts or ends in the endpoints of $\gamma_d$. By \cref{lem:all minimal} combined with our assumption on the non-perfect objects in $\U$, this yields that the object $Y_d$ corresponding to the graded arc $\gamma_d$ is contained in $\U$. Similarly as in the proof of \cref{thm:nothing in finite glob dim}, this is a contradiction as $\gamma_d$ is not minimal.

Thus, we can assume that $\U$ contains an indecomposable object $X$ such that the corresponding graded arc $(\gamma_X,f_X)$ starts in a marked point $m$ and ends wrapping around a puncture $r$. Before defining the object $Z$, we label some useful points in the model and make some observations. Denote by $l$ an edge in the dual graph $L$ that is adjacent to $r$. Let $p$ be an intersection of $\gamma_X$ and $l$ such that after this intersection, the arc $\gamma_X$ wraps infinitely many times around the single puncture $r$. The next intersection of $\gamma_X$ and $l$ is denoted by $p'$, as indicated in \cref{fig:infinite bandlike object}. Note that by our assumption on $p$, any arc that intersects $l$ between $p$ and $p'$ and does not intersect $\gamma_X$, has an end that wraps infinitely many times around $r$. Let $w$ be the integer defined by the equation $f_X(p')=f_X(p)+w$, and observe that the grading at the intersections of $\gamma_X$ and $l$ increases by $w$ each time $\gamma_X$ loops around $r$. By the description of morphisms arising from punctures, we deduce that $d$ divides $w$ as $X$ is in $\U$. 

We now construct the arc $\gamma_Z$ by concatenating the following four segments:
\begin{enumerate}[label=(\roman*)]
    \item The first segment of $\gamma_Z$ starts in $m$ and follows the trajectory of $\gamma_X$ until the point $p'$;
    \item The second segment of $\gamma_Z$ follows $l$ from $p'$ to $p$;
    \item The third segment of $\gamma_Z$ follows the trajectory of $\gamma_X$ from $p$ and back to $m$;
    \item The last segment of $\gamma_Z$ follows the minimal arc $\gamma_m$ starting in $m$ and ending in a marked point $m'$ in such a way that $\gamma_Z$ has exactly one self-intersection.
\end{enumerate}
We equip $\gamma_Z$ (resp.\ $\gamma_m$) with the grading $f_Z$ (resp.\ $f_m$) that is compatible with $f_X$ in $m$ and denote the indecomposable object in $\D^b(\Lambda)$ corresponding to $(\gamma_Z,f_Z)$ by $Z$.

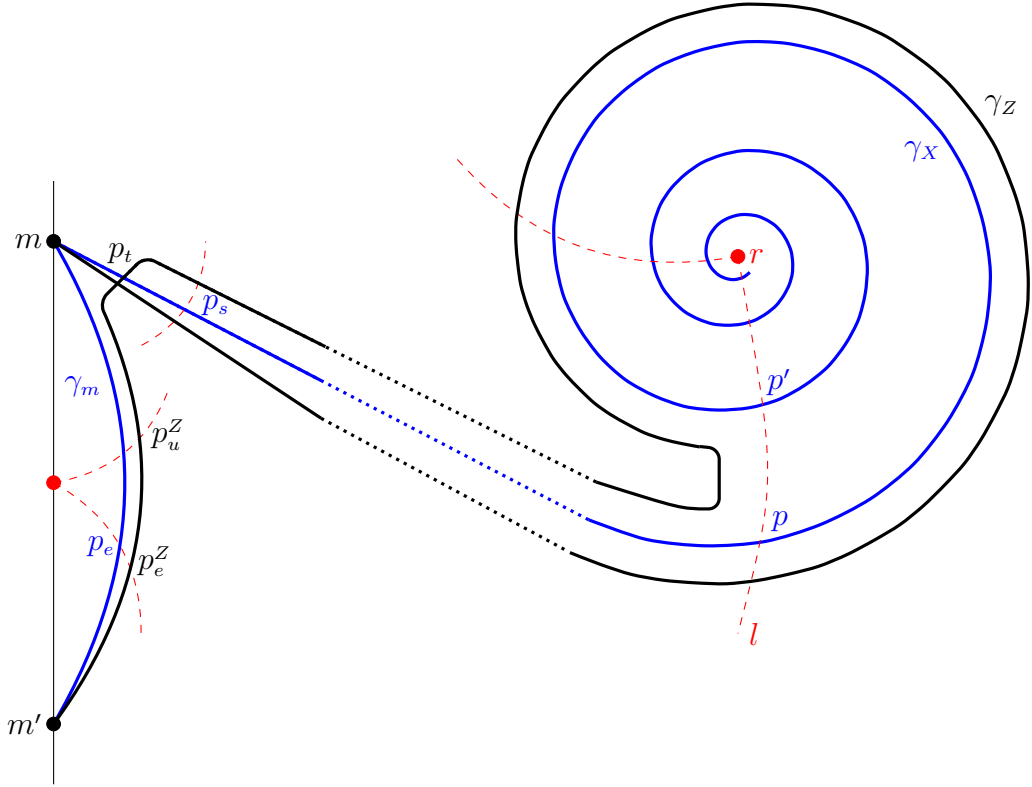
\begin{figure}[t!hb]
    \centering
    \begin{tikzpicture}
	%boundary with marked points
	\draw (0,0) -- coordinate[pos=0.1](m2) coordinate[pos=0.1](m2)coordinate[pos=0.9](m) coordinate[pos=0.5](r2) (0,8);
    \coordinate (r) at (9,7); %red point (interior)
    %Arcs in the d-CT candidate
    \begin{scope}[petal]
    	%petal
    	\draw (m) to[bend left] node[pos=0.3, left]{$\gamma_m$} (m2);
    	%spiral
    	\begin{scope}[shift=(r)] 
	    	\draw [domain=13.5:33.5,variable=\t,smooth,samples=100] plot ({-\t r}: {0.0001*\t*\t*\t}) coordinate(endspiral); 
	    \end{scope}
	    \draw[dotted] (endspiral)-- coordinate[pos=0.5] (midpoint) (m);
	    \draw (midpoint) -- (m);
    \end{scope}
    %laminates/red arcs
    \begin{scope}[laminate,dashed]
    	\draw (r) .. controls +(280:3) .. node[at end,right]{$l$}+(270:5);		%"internal" laminate
    	\draw[bend left] (r) to +(160:4);		%"internal" laminate
    	\draw (r2) to[bend right] +(40:2);   %boundary laminate
    	\draw (r2) to[bend left] +(-60:2.3);  %boundary laminate
    	\draw ([shift={(-50:1.8)}]m) to[bend right] ([shift={(0:2)}]m);  %laminate near m
    \end{scope}
    %draw "problematic object"
    \draw[very thick] (m) -- ([shift={(0,-.5)}]midpoint);
    \draw[dotted, very thick] ([shift={(0,-.5)}]midpoint) -- ([shift={(236:.5)}]endspiral);
    \begin{scope}[shift=(r), very thick] 
	    	\draw [domain=26.9:33.5,variable=\t,smooth,samples=70] plot ({-\t r}: {0.0001*\t*\t*\t+.5}); %Spiral
	    	\draw[rounded corners] ({-26.9 r}:2.57) -- ({-26.8 r}:2.56) -- 
	    		+(0,-.8)  .. controls +(-.5,0).. ([shift={(.1,.5)}]endspiral); %bend 
	    	\draw[dotted, rounded corners] ([shift={(.1,.5)}]endspiral) -- coordinate[pos=0.6] (midpointblack) ([shift={(1.2,-.2)}]m) -- ++(-.3,-.3); %dotted part
	    	\draw[rounded corners]  (midpointblack) --([shift={(1.2,-.2)}]m)-- ++(-.6,-.6) to[bend left]
	    	coordinate[pos=0.3](firstcross) coordinate[pos=0.6](lastcross) (m2); %undotted part
	    \end{scope}
	    
    %marked points on boundary    
    \filldraw (m) circle (.09cm) node[left] {$m$};
    \filldraw (m2) circle (.09cm) node[left] {$m'$};
    %marked points inside
    \filldraw[laminate] (r) circle (.09cm) node[right] {$r$};
    \filldraw[laminate] (r2) circle (.09cm);
    %labels 
    \node[petal] at ([shift={(338.3:2.3)}]m) {$p_s$};
    \node at ([shift={(350:.9)}]m) {$p_t$};
    \node at ([shift={(.3,0)}]lastcross) {$p_e^{Z}$};
    \node[petal] at ([shift={(-.4,.2)}]lastcross) {$p_e$};
    \node at ([shift={(.4,0)}]firstcross) {$p_u^{Z}$};
    \node[petal] at ([shift={(279:3.55)}]r) {$p$};
    \node[petal] at ([shift={(288:1.75)}]r) {$p'$};
    %\node at ([shift={(172:3.4)}]r) {$p_w$};
    \node at ([shift={(30:4)}]r) {$\gamma_{Z}$};
    \node[petal] at ([shift={(30:2.8)}]r) {$\gamma_X$};
\end{tikzpicture}

    \caption{The arc $\gamma_Z$ as used in the proof of \cref{thm:main result}}
    \label{fig:infinite bandlike object}
\end{figure}

The last step in our proof is to show that $Z \in \dU = \U$.
Let $p_e$ (resp.\ $p_e^Z$) denote the intersection point of $\gamma_m$ (resp.\ $\gamma_Z$) that is closest to the marked point $m'$. By similar arguments as in the proof of \cref{thm:nothing in finite glob dim}, we see that $f_Z(p_e^Z)=f_m(p_e)+w$. As $d$ divides $w$, this implies that the gradings of $\gamma_Z$ and $\gamma_m$ are $d$-compatible in $m'$. \cref{lem:d-compatibility general} part \textit{(2)} hence allows us to conclude that intersections in the endpoints of $\gamma_Z$ give rise to no non-zero morphisms from $Z$ to $\U[i]$ for $1\leq i \leq d-1$.

We claim that the same holds for any intersection between $\gamma_Z$ and a graded arc corresponding to an object in $\U$ in the interior of the surface. Notice first that this is immediate for the segments of $\gamma_Z$ described in (i) and (iii), as an intersection here would also yield an intersection with $\gamma_X$ and hence contradict \cref{lem:no crossings}. If a graded arc corresponding to an object in $\U$ intersects $\gamma_Z$ in the interior along the segment described in (iv), this yields a graded intersection of the same type as the  one denoted by $p_t$ in \cref{fig:infinite bandlike object}.

To study this graded intersection, let $p_s$ (resp.\ $p_u^Z$) denote the intersection of $\gamma_X$ (resp.\ the fourth segment of $\gamma_Z$) and $L$ that is closest to $m$, as indicated in \cref{fig:infinite bandlike object}. By similar arguments as before, we have $f_{Z}(p_u^{Z})=f_X(p_s)+w$. Again using that $d$ divides $w$, this implies that the intersection in $p_t$ does not give rise to non-zero morphisms from $Z$ to $\U[i]$ for $1\leq i \leq d-1$.

It remains to consider intersections of graded arcs corresponding to objects in $\U$ along the segment of $\gamma_{Z}$ described in (ii). As such a graded arc does not intersect $\gamma_X$, it wraps around $r$ on one end by the assumption on $p$. Close to $r$, the grading of such an arc hence agrees with that of $\gamma_X$ up to shifts by $d$. Using this, we look at the polygon of the dissection given by $L$ that the intersection lies in. Considering different possibilities for the marked point of this polygon, we see that an intersection of this type also does not give rise to non-zero morphisms from $Z$ to $\U[i]$ for $1\leq i \leq d-1$. This allows us to conclude that $Z \in \dU = \U$, which contradicts \cref{lem:all minimal} as $\gamma_{Z}$ is finite but not minimal.
\end{proof}

The fact that a gentle algebra is derived equivalent to an algebra of Dynkin type $A$ if and only if the surface in the associated geometric model is a disk \cite{OPS}*{Corollary 1.23}, played an important role in the proofs of \cref{thm:nothing in finite glob dim} and \cref{thm:main result}. We now move on to characterizing $d$-cluster tilting subcategories of the derived category in this case. Our classification gives a geometric interpretation of the $d$-cluster tilting subcategories of the derived category arising from $d$-representation finite $d$-hereditary gentle algebras, as described in \cref{subsec: d-CT}. By our classification in \cref{cor: d-RF gentle}, these algebras are of the form $KA_n/J^2$ for $n=d+1$.

\begin{theorem} \label{prop:type A}
Assume $n \geq 3$ and let $\Lambda$ be a gentle algebra that is derived equivalent to an algebra of Dynkin type $A_n$. A subcategory $\U \subseteq \D^b(\Lambda)$ is $d$-cluster tilting and closed under $[d]$ for some $d\geq 2$ if and only if $d=n-1$ and $\U = \V_X^d$ for some object $X$ corresponding to a minimal graded arc.
\end{theorem}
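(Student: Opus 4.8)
The plan is to carry out the entire argument inside the geometric model, reducing the statement to the combinatorics of minimal arcs on a disk. Since $\Lambda$ is derived equivalent to type $A_n$, the surface is a disk by \cite{OPS}*{Corollary 1.23}; in particular it has a single boundary component and no punctures, so $\Lambda$ has finite global dimension and every indecomposable object of $\D^b(\Lambda)$ is perfect and corresponds to a finite graded arc. First I would pin down the number of marked points. In a disk every unordered pair of distinct marked points determines a unique arc, so if $t$ is the number of marked points there are $\binom{t}{2}$ indecomposable objects up to shift; as this count is a derived invariant equal to the number $\binom{n+1}{2}$ of positive roots of $A_n$, we obtain $t=n+1$. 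The minimal arcs are exactly those joining cyclically consecutive marked points, so there are precisely $n+1$ of them and they form a single cycle around the boundary.

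For the backward implication, assume $d=n-1$ and $\U=\V_X^d$. Since $KA_n/J^2$ is an iterated tilted algebra of type $A_n$, it is derived equivalent to $\Lambda$, and by \cref{cor: d-RF gentle} it is $(n-1)$-representation finite $(n-1)$-hereditary. Its unique $(n-1)$-cluster tilting subcategory lifts to a $d$-cluster tilting subcategory of $\D^b$ that is closed under $[d]$, exactly as recalled in \cref{subsec: d-CT}. Applying \cref{lem:only minimal}(2) to a projective (hence minimal) object in this subcategory identifies it with $\V_{X_0}^{d}$ for some minimal arc $X_0$, generalizing \cref{ex:V_X^d}; every $\V_X^d$ is then a shift of this one, and the $d$-cluster tilting property and closure under $[d]$ are preserved by shifts.

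For the forward implication, suppose $\U\subseteq\D^b(\Lambda)$ is $d$-cluster tilting and closed under $[d]$. By \cref{lem:all minimal} every indecomposable object of $\U$ corresponds to a minimal graded arc, and since the disk has a single boundary component, every minimal arc has its endpoints on $\B_X$ for any fixed indecomposable $X\in\U$. Thus \cref{lem:only minimal}(2) applies to all of $\U$ and forces $\U=\V_X^d$. It remains to prove $d=n-1$, and here I would compute the grading winding around the cycle of $n+1$ minimal arcs. Propagating compatible gradings from $\gamma_1$ through $\gamma_2,\dots,\gamma_t$, the total winding is the geometric invariant $n-1$: this can be read off in the model of $KA_n/J^2$, where $n$ of the minimal arcs cross the dual graph $L$ once and so carry their grading unchanged, while a single long minimal arc crosses $L$ exactly $n$ times and shifts the grading by $n-1$. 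Consequently $\gamma_1$ and $\gamma_t$, which share the remaining endpoint $m_t$, are incompatibly graded by exactly $n-1$, giving a non-zero morphism in degree $n-1$ between two objects of $\V_X^d$. Rigidity of $\V_X^d$ (part of being $d$-cluster tilting and closed under $[d]$) then forces $d\mid(n-1)$.

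The final and hardest step is to upgrade $d\mid(n-1)$ to $d=n-1$, and I expect this maximality analysis to be the main obstacle. Assuming $d<n-1$, I would construct a non-minimal (diagonal) arc $W$ whose grading can be chosen $d$-compatible with the minimal arcs of $\V_X^d$ at \emph{both} of its endpoints; this is possible precisely because the winding $n-1$ is a proper multiple of $d$, leaving enough grading budget to close up a diagonal spanning part of the boundary, exactly as the surplus winding produces the forbidden non-minimal objects in the proof of \cref{thm:nothing in finite glob dim}. Such a $W$ then lies in $(\V_X^d)^{\perp_d}$ but not in $\V_X^d$, so $\V_X^d$ is not $d$-cluster tilting, contradicting $\U=\V_X^d$. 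Dually, when $d=n-1$ the winding exactly exhausts the budget, so no diagonal can be $d$-compatible at both ends and $(\V_X^d)^{\perp_d}$ contains only minimal arcs. The delicate point throughout is the bookkeeping of gradings at the two endpoints of a candidate diagonal, which is where the precise value $n-1$, rather than a mere divisor of it, enters.
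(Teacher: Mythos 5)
Your proposal is correct, and its forward direction is essentially the paper's: \cref{lem:all minimal} makes every indecomposable of $\U$ correspond to a minimal graded arc, the single boundary component together with \cref{lem:only minimal} part (2) forces $\U=\V_X^d$, the winding computation around the cycle of $n+1$ minimal arcs (the paper's identity $f_{n+1}(p_{n+1})=f_1(p_1)+n-1$) gives $d\mid n-1$ via rigidity and closure under $[d]$, and maximality is settled exactly as in the paper by producing, when $d<n-1$, a non-minimal arc crossing $d+1$ edges of $L$ that can be graded $d$-compatibly at both endpoints and hence would lie in $\Ud=\U$, contradicting \cref{lem:all minimal}. Where you genuinely differ is the backward direction: the paper verifies by hand that $\V_X^d=\Ud=\dU$ (minimal arcs outside $\V_X^d$ fail $d$-compatibility, and non-minimal arcs cross $2\le l\le d$ edges of $L$, so no grading is $d$-compatible at both ends) and then checks functorial finiteness directly, whereas you transport the known example: $KA_n/J^2$ is $(n-1)$-representation finite $(n-1)$-hereditary by \cref{cor: d-RF gentle}, Iyama's lift (recalled in \cref{subsec: d-CT}) is $d$-cluster tilting and closed under $[d]$, and \cref{lem:all minimal} with \cref{lem:only minimal} part (2) identify it with $\V_{X_0}^d$ for some minimal $X_0$. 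This buys you the perpendicularity conditions and functorial finiteness for free, at the price of two external inputs and of the claim that every $\V_X^d$ is a shift of $\V_{X_0}^d$. That claim is the one loose end in your write-up: changing the grading of $X$ only shifts the subcategory, but changing the underlying minimal arc requires knowing that propagating compatible gradings once around the boundary shifts the grading by exactly $n-1=d$, so that the rotated system closes up inside the same $[d\Z]$-orbit of objects; this is precisely the winding computation you invoke only later, in the forward direction, so it should be stated and proved before the transport step. (Your count of the marked points, matching arcs against the $\binom{n+1}{2}$ positive roots of $A_n$, is a pleasant alternative to the paper's reduction to $\Lambda\cong KA_n/J^2$.)
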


\begin{proof}

Without loss of generality, we can assume $\Lambda \cong KA_n/J^2$. As demonstrated in \cref{ex:quiver from surface}, the geometric model associated to $\Lambda$ is a disk with $n+1$ marked points on the boundary. The dual graph $L$ is shown in \cref{ex:dual graph}.

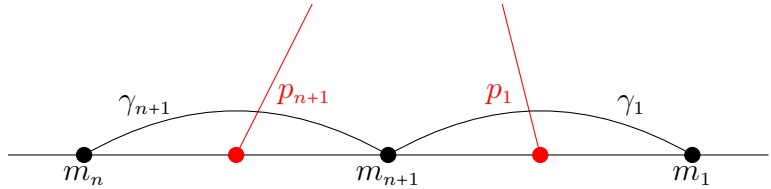
\begin{figure}[bht]
    \centering
    \begin{tikzpicture}
        \draw (-1,0) -- (9,0);
        %Red graph
        \filldraw[red] (2,0) circle[radius=.1];
        \filldraw[red] (6,0) circle[radius=.1, fill=red];
        \draw[red] (2,0) -- node[right, pos=0.4]{$p_{n+1}$} (3,2);
        \draw[red] (6,0) --node[left, pos=0.4]{$p_{1}$} (5.5,2);
        %Marked points
        \filldraw (0,0) circle[radius=.1, fill=black];
        \filldraw (4,0) circle[radius=.1, fill=black];
        \filldraw (8,0) circle[radius=.1, fill=black];
        %Petals
        \draw (0,0) to[bend left] node[above, pos=.2] {$\gamma_{n+1}$} (4,0);
        \draw (4,0) to[bend left] node[above, pos=.8] {$\gamma_1$}(8,0);
        \node[below] at (4,0) {$m_{n+1}$};
        \node[below] at (0,0) {$m_{n}$};
        \node[below] at (8,0) {$m_{1}$};
    \end{tikzpicture}
    \caption{The notation used in the proof of \cref{prop:type A}}
    \label{fig:lemma type A}
\end{figure}

Given an object $X$ in $\D^b(\Lambda)$ corresponding to a minimal graded arc $\gamma_X$, we follow the notation introduced in the definition of $\V_X^d$ with $t=n+1$, see \cref{fig:naming minimal}. We use the notation $f_i$ for the grading of $\gamma_i$. Consider the intersection $p_1$ (resp.\ $p_{n+1}$) of $\gamma_1 = \gamma_X$ (resp.\ $\gamma_{n+1}$) with the dual graph $L$ that is closest to the endpoint $m_{n+1}$, as indicated in \cref{fig:lemma type A}. By the description of $L$ and the compatibility of grading from the construction of $\V_X^d$, we see that
\begin{equation}\tag{$\ast$}
    f_{n+1}(p_{n+1}) = f_1(p_1)+n-1.
\end{equation}

Assume that $\U \subseteq \D^b(\Lambda)$ is $d$-cluster tilting and closed under $[d]$ for some $d \geq 2$. Let $X$ be an indecomposable object in $\U$, and note that $X$ is perfect as $\Lambda$ has finite global dimension. By \cref{lem:all minimal}, the corresponding graded arc $\gamma_X$ is hence minimal. As the surface in the geometric model associated to $\Lambda$ only has one boundary component, \cref{lem:only minimal} part \textit{(2)} implies that $\U = \V_X^d$.

Since $X$ and $X_{n+1}$ are contained in $\V_X^d=\U$, \cref{lem:only minimal} part \textit{(1)} combined with $(\ast)$ yields that $d$ divides $n-1$. If $d < n-1$, there exists a non-minimal arc that crosses precisely $d+1$ edges of $L$. For this arc, we can choose a grading such that the corresponding object is contained in $\Ud=\U$. This contradicts \cref{lem:all minimal}, and we can thus conclude that $d=n-1$.

For the reverse direction, let $d=n-1$. As $n \geq 3$, this yields $d \geq 2$. Consider $\U = \V_X^d$ for some indecomposable object $X$ in $\D^b(\Lambda)$ corresponding to a minimal graded arc. Note that $\V_X^d$ is closed under $[d]$ by definition. It remains to show that $\V_X^d$ is a $d$-cluster tilting subcategory of$\D^b(\Lambda)$.

By the compatibility of grading in the definition of $\V_X^d$ combined with $(\ast)$ and the assumption $d=n-1$,  graded arcs corresponding to indecomposable objects in $\U = \V_X^d$ have $d$-compatible grading in common endpoints. This implies that
\[
\Hom(\V_X^d,\V_X^d[j]) = 0
\]
whenever $d \nmid j$. In particular, we have $\V_X^d \subseteq \Ud$ and $\V_X^d \subseteq \dU$.

Consider an indecomposable object $Y$ that is not contained in $\V_X^d$. If the corresponding graded arc $\gamma_Y$ is minimal, this means that the grading of $\gamma_Y$ in its endpoints is not $d$-compatible with the graded arcs corresponding to indecomposable objects in $\V_X^d$. This gives $Y \notin \Ud$ and $Y \notin\dU$. If $\gamma_Y$ is not minimal, we see that $\gamma_Y$ crosses precisely $l$ edges of $L$ for some $2 \leq l \leq d$. This implies that for any possible grading of $\gamma_Y$, one has $Y \notin \Ud$ and $Y \notin\dU$, which yields $\Ud \subseteq \V_X^d$ and $\dU \subseteq \V_X^d$. We can hence conclude that $\V_X^d = \Ud=\dU$.

It remains to observe that $\V_X^d$ is functorially finite in $\D^b(\Lambda$). For this, consider an indecomposable object $Y$ in the derived category. If the grading of $\gamma_Y$ in its endpoints is not compatible with the grading of any minimal arc corresponding to an object in $\V_X^d$, left and right approximations of $Y$ are given by the zero morphism. If the grading is compatible at one of the endpoints, we get left and right approximations by the morphisms corresponding to the graded intersection. Our subcategory $\U = \V_X^d$ is hence $d$-cluster tilting, which finishes the proof.
\end{proof}

Note that it is possible to show a version of \cref{prop:type A} by working directly with the AR-quiver of $\D^b(\Lambda)$ instead of using the geometric model.

\begin{remark}
As $\Lambda = KA_n/J^2$ is $d$-representation finite $d$-hereditary for $n=d+1$, the module category contains a unique $d$-cluster tilting subcategory $\U \subseteq \bmod \Lambda$. Notice that the $d$-cluster tilting subcategories described in the theorem above are equivalent to the subcategory $\U[d\Z]$ of $\D^b(\Lambda)$, see \cref{ex:d-CT} and \cref{ex:V_X^d}. Combining \cref{thm:main result} and \cref{prop:type A}, we see that all $d$-cluster tilting subcategories of the derived category of a gentle algebra that are closed under $[d]$ arise in this way. 
\end{remark}

Combining our results in this section with \cite{OPS}*{Corollary 1.23}, we obtain the following corollary.

\begin{corollary}
Let $\Lambda = KQ/I$ be a gentle algebra which is not a field. The following statements are equivalent:
\begin{enumerate}
    \item There exists a $d$-cluster tilting subcategory $\U \subseteq \D^b( \Lambda)$ that is closed under $[d]$ for some $d\geq 2$.
    \item The algebra $\Lambda$ is derived equivalent to an algebra of Dynkin type $A_n$ with $n \geq 3$.
    \item The quiver $Q$ is a tree with $\vert Q_0 \vert \geq 3$.
    \item The surface in the geometric model associated to $\Lambda$ is a disk with at least four marked points on the boundary.
\end{enumerate}
\end{corollary}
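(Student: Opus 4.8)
The plan is to prove the four-way equivalence by first treating the three \emph{structural} statements $(2)$, $(3)$, $(4)$ together and then closing the cycle with the homological statement $(1)$. For the structural part I would lean on the OPS dictionary: \cite{OPS}*{Corollary 1.23} identifies being derived equivalent to an algebra of Dynkin type $A$ with the surface $S$ in the geometric model being a disk, and (via the cycle rank of $Q$ against the topology of $S$) with $Q$ being a tree. What remains is to match up the three numerical thresholds. Here I would use that the number of edges of the dissection equals $\vert Q_0\vert$, which is the number of simple modules and hence the derived invariant equal to $n$ when $\Lambda$ is derived equivalent to $A_n$. A short count on the disk then pins down the number $k$ of marked points: each polygon has exactly one boundary segment, so there are $k$ polygons, and starting from a single $k$-gon and adding the $\vert Q_0\vert$ internal arcs gives $k = 1 + \vert Q_0\vert$ faces, whence $k = \vert Q_0\vert + 1 = n+1$. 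Consequently ``$n \geq 3$'', ``$\vert Q_0\vert \geq 3$'' and ``at least four marked points'' all coincide, yielding $(2) \Leftrightarrow (3) \Leftrightarrow (4)$.

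For the equivalence $(1) \Leftrightarrow (2)$ I would argue as follows. The implication $(2) \Rightarrow (1)$ is immediate from the reverse direction of \cref{prop:type A}: if $\Lambda$ is derived equivalent to $A_n$ with $n \geq 3$, then $d = n-1 \geq 2$ and $\V_X^d$ is a $d$-cluster tilting subcategory closed under $[d]$ for any $X$ corresponding to a minimal graded arc. For $(1) \Rightarrow (2)$, \cref{thm:main result} shows that the existence of such a subcategory forces $\Lambda$ to be derived equivalent to some $A_n$; since $\Lambda$ is not a field, $n \geq 2$, because the only connected gentle algebra derived equivalent to $A_1 = K$ is $K$ itself (a one-vertex gentle algebra with a loop has infinite global dimension, while derived equivalence to $K$ forces finite global dimension). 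To upgrade this to $n \geq 3$ I would rerun the forward computation from the proof of \cref{prop:type A}: every indecomposable object of the subcategory is perfect and hence corresponds to a minimal arc by \cref{lem:all minimal}, \cref{lem:only minimal} part \textit{(2)} forces the subcategory to equal $\V_X^d$, and the grading relation $(\ast)$ established there forces $d \mid (n-1)$; as $d \geq 2$, this excludes $n \leq 2$.

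The main obstacle I anticipate is not any single deep step but the careful numerical bookkeeping needed to align the thresholds and to dispose of the degenerate small-rank cases. In particular, one must verify the disk count $k = n+1$ and the cycle-rank characterization of the tree condition cleanly, and must ensure that the hypothesis ``$\Lambda$ is not a field'' is invoked exactly where it excludes $A_1$, so that the strict inequalities $n \geq 3$, $\vert Q_0\vert \geq 3$ and ``$\geq 4$ marked points'' match the homological condition $d \geq 2$ rather than merely $d \geq 1$. Once these counts are settled, the equivalences assemble directly from \cref{thm:main result}, \cref{prop:type A} and \cite{OPS}*{Corollary 1.23}.
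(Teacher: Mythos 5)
Your proposal is correct and takes essentially the same route as the paper, whose own "proof" is just the one-line remark that the corollary follows by combining \cref{thm:main result}, \cref{prop:type A} and \cite{OPS}*{Corollary 1.23}; your argument is exactly that combination with the counting on the disk ($k = \vert Q_0\vert + 1 = n+1$ marked points) and the tree/disk dictionary filled in. Your extra care in ruling out $n=2$ for $(1) \Rightarrow (2)$ -- by rerunning the forward computation of \cref{prop:type A} to get $d \mid (n-1)$, since that theorem is only stated for $n \geq 3$ -- and in using the "not a field" hypothesis to exclude $n=1$ supplies details the paper leaves implicit, and both are handled correctly.
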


\begin{remark}
In the case where $\Lambda=K$ is a field, there exists a $d$-cluster tilting subcategory $\U_d \subseteq \D^b(\Lambda)$ that is closed under $[d]$ for any $d \geq 1$. This subcategory is given by
\[\U_d = 
\add\{K[di] \mid i \in \Z\} \subseteq \D^b(\Lambda),
\]
where the notation $K$ is used for the stalk complex with $K$ in degree $0$.
Any $d$-cluster tilting subcategory of $\D^b(\Lambda)$ is equivalent to $\U_d$.
\end{remark}

\begin{acknowledgements}
This work has been partially supported by project IDUN, funded through the Norwegian Research Council (295920).

The second author was partially funded by the Norwegian Research Council via the project "Higher homological algebra and tilting theory" (301046). The third author would like to thank the Isaac Newton Institute for Mathematical Sciences, Cambridge, for support and hospitality during the program Cluster Algebras and Representation Theory, where work on this paper was undertaken. This work was supported by EPSRC grant no EP/R014604/1.

Parts of this work was carried out while the first two authors participated in the Junior Trimester Program ``New Trends in Representation Theory'' at the Hausdorff Research Institute for Mathematics in Bonn. They thank the Institute for excellent working conditions. They would also like to thank Jenny August, Sondre Kvamme, Yann Palu and Hipolito Treffinger for helpful discussions.

We thank the anonymous referee for their helpful comments and suggestions.
\end{acknowledgements}

%bibliography
\begin{bibdiv}
\begin{biblist}

\bib{AIR}{article}{
   author={Amiot, C.},
   author={Iyama, O.},
   author={Reiten, I.},
   title={Stable categories of Cohen-Macaulay modules and cluster
   categories},
   journal={Amer. J. Math.},
   volume={137},
   date={2015},
   number={3},
   pages={813--857},
}

\bib{ALP}{article}{
   author={Arnesen, K.\ K.},
   author={Laking, R.},
   author={Pauksztello, D.},
   title={Morphisms between indecomposable objects in the bounded derived category of a gentle algebra},
   journal={J. Algebra},
   volume={467},
   date={2016},
   pages={1--46},
}
\bib{AH}{article}{
   author={Assem, I.},
   author={Happel, D.},
   title={Generalized tilted algebras of type $A_n$},
   journal={Comm. Algebra},
   volume={9},
   number={20},
   date={1981},
   pages={2101– 2125, 1981},
}

\bib{ASS}{book}{
   author={Assem, I.},
   author={Simson, D.},
   author={Skowro\'{n}ski, A.},
   title={Elements of the representation theory of associative algebras.
   Vol. 1},
   series={London Mathematical Society Student Texts},
   volume={65},
   note={Techniques of representation theory},
   publisher={Cambridge University Press, Cambridge},
   date={2006},
   pages={x+458},
}

\bib{AS}{article}{
   author={Assem, I.},
   author={Skowro\'{n}ski, A.},
   title={Iteraterd tilted algebras of type $\tilde A_n$},
   journal={Math. Z.},
   volume={195},
   number={2},
   date={1987},
   pages={269-290},
}

\bib{BC-S}{article}{
    author={Baur, K.},
    author={Coelho Simões, R.},
    title={A Geometric Model for the Module Category of a Gentle Algebra},
    journal={International Mathematics Research Notices},
    year={2019},
}

\bib{BM}{article}{
    author={Bekkert, V.}, 
    author={Merklen, H.\ A.},
    title={Indecomposables in Derived Categories of Gentle Algebras},
    journal={Algebras and Representation Theory},
    number={6},
    pages={285–302},
    year={2003},
}

\bib{BDMTY}{article}{
   author={Br\"{u}stle, T.},
   author={Douville, G.},
   author={Mousavand, K.},
   author={Thomas, H.},
   author={Y\i ld\i r\i m, E.},
   title={On the combinatorics of gentle algebras},
   journal={Canad. J. Math.},
   volume={72},
   date={2020},
   number={6},
   pages={1551--1580},
}

\bib{BR}{article}{
    author={Butler, M.\ C.\ R.},
    author={Ringel, C.\ M.},
    title={Auslander--Reiten sequences with few middle terms and applications to string algebras},
    journal={Comm. Algebra. },
    volume={15},
    number={1-2},
    pages = {145-179},
    year={1987}
}

\bib{CPS}{article}{
   author={\c{C}anak\c{c}\i, \.{I}},
   author={Pauksztello, D.},
   author={Schroll, S.},
   title={On extensions for gentle algebras},
   journal={Canad. J. Math.},
   volume={73},
   date={2021},
   number={1},
   pages={249--292},
}

\bib{DI}{article}{
   author={Darp\"{o}, E.},
   author={Iyama, O.},
   title={$d$-representation-finite self-injective algebras},
   journal={Adv. Math.},
   volume={362},
   date={2020},
   pages={106932, 50},
}

\bib{DK}{article}{
    author={Darp\"{o}, E.},
    author={Kringeland, T.},
    title={$d$-Representation-finite symmetric Nakayama algebras and trivial extensions of quiver algebras},
    journal={arXiv:2103.15380},
    year={2021},
}

\bib{DJL}{article}{
   author={Dyckerhoff, T.},
   author={Jasso, G.},
   author={Lekili, Y.},
   title={The symplectic geometry of higher Auslander algebras: symmetric
   products of disks},
   journal={Forum Math. Sigma},
   volume={9},
   date={2021},
   pages={Paper No. e10, 49},
}

\bib{ENI}{article}{
    author={Ebrahimi, R.},
    author={Nasr-Isfahani, A.},
    title={Higher Auslander's formula},
    journal={Int. Math. Res. Not. IMRN},
    year={2021},
    number={0},
    pages={1--18}
}

\bib{EP}{article}{
    author = {Evans, D.\ E.}, 
    author = {Pugh, M.},
    title = {The {N}akayama automorphism of the almost {C}alabi-{Y}au
              algebras associated to {$SU(3)$} modular invariants},
    journal = {Comm. Math. Phys.},
    volume = {312},
    year = {2012},
    number = {1},
    pages = {179--222}
}
		
\bib{GKO}{article}{
   author={Geiss, C.},
   author={Keller, B.},
   author={Oppermann, S.},
   title={$n$-angulated categories},
   journal={J. Reine Angew. Math.},
   volume={675},
   date={2013},
   pages={101--120},
}

\bib{HKK}{article}{
    author={Haiden, F.},
    author={Katzarkov, L.}, 
    author={Kontsevich, M.},
    title={Flat surfaces and stability structures},
    journal={Publ.math.IHES},
    volume={126}, 
    pages={247–318},
    year={2017}
    }
    
\bib{Happel}{article}{
   author={Happel, D.},
   title={Auslander-Reiten triangles in derived categories of
   finite-dimensional algebras},
   journal={Proc. Amer. Math. Soc.},
   volume={112},
   date={1991},
   number={3},
   pages={641--648},
}
    
\bib{H}{article}{
    author={Haugland, J.},
    title={The Grothendieck group of an $n$-exangulated category},
    journal={Appl. Categ. Structures},
    volume={29},
    date={2021},
    number={3},
    pages={431--446},
}

\bib{HS}{article}{
    author={Haugland, J.},
    author={Sand\o y, M.\ H.},
    title={Higher Koszul duality and connections with $n$-hereditary algebras},
    journal={arXiv:2101.12743},
    year={2021},
}

\bib{HI}{article}{
   author={Herschend, M.},
   author={Iyama, O.},
   title={$n$-representation-finite algebras and twisted fractionally
   Calabi--Yau algebras},
   journal={Bull. Lond. Math. Soc.},
   volume={43},
   date={2011},
   number={3},
   pages={449--466},
}

\bib{HI11}{article}{
   author={Herschend, M.},
   author={Iyama, O.},
   title={Selfinjective quivers with potential and 2-representation-finite
   algebras},
   journal={Compos. Math.},
   volume={147},
   date={2011},
   number={6},
   pages={1885--1920},
}

\bib{HIMO}{article}{
    title={Representation theory of Geigle-Lenzing complete intersections}, 
    author={Herschend, M.},
    author = {Iyama, O.},
    author = {Minamoto, H.},
    author = {Oppermann, S.},
    journal={to appear in Mem. Amer. Math. Soc.},
}

\bib{HIO}{article}{
   author={Herschend, M.},
   author={Iyama, O.},
   author={Oppermann, S.},
   title={$n$-representation infinite algebras},
   journal={Adv. Math.},
   volume={252},
   date={2014},
   pages={292--342},
}

\bib{HJV}{article}{
   author={Herschend, M.},
   author={J\o rgensen, P.},
   author={Vaso, L.},
   title={Wide subcategories of $d$-cluster tilting subcategories},
   journal={Trans. Amer. Math. Soc.},
   volume={373},
   date={2020},
   number={4},
   pages={2281--2309},
}

\bib{HLN}{article}{
   author={Herschend, M.},
   author={Liu, Y.},
   author={Nakaoka, H.},
   title={$n$-exangulated categories (I): Definitions and fundamental
   properties},
   journal={J. Algebra},
   volume={570},
   date={2021},
   pages={531--586},
}

\bib{Iyama2007}{article}{
   author={Iyama, O.},
   title={Auslander correspondence},
   journal={Adv. Math.},
   volume={210},
   date={2007},
   number={1},
   pages={51--82},
}

\bib{Iyama2007_2}{article}{
   author={Iyama, O.},
   title={Higher-dimensional Auslander--Reiten theory on maximal orthogonal
   subcategories},
   journal={Adv. Math.},
   volume={210},
   date={2007},
   number={1},
   pages={22--50},
}

\bib{Iyama2008}{article}{
   author={Iyama, O.},
   title={Auslander--Reiten theory revisited},
   conference={
      title={Trends in representation theory of algebras and related topics},
   },
   book={
      series={EMS Ser. Congr. Rep.},
      publisher={Eur. Math. Soc., Z\"{u}rich},
   },
   date={2008},
   pages={349--397},
}

\bib{Iyama}{article}{
   author={Iyama, O.},
   title={Cluster tilting for higher Auslander algebras},
   journal={Adv. Math.},
   volume={226},
   date={2011},
   number={1},
   pages={1--61},
}

\bib{IO}{article}{
   author={Iyama, O.},
   author={Oppermann, S.},
   title={$n$-representation-finite algebras and $n$-APR tilting},
   journal={Trans. Amer. Math. Soc.},
   volume={363},
   date={2011},
   number={12},
   pages={6575--6614},
}

\bib{IO13}{article}{
   author={Iyama, O.},
   author={Oppermann, S.},
   title={Stable categories of higher preprojective algebras},
   journal={Adv. Math.},
   volume={244},
   date={2013},
   pages={23--68},
}

\bib{IW}{article}{
    author = {Iyama, O.},
    author = {Wemyss, M.},
    title = {Maximal modifications and {A}uslander-{R}eiten duality for
              non-isolated singularities},
    journal = {Invent. Math.},
    volume = {197},
    year = {2014},
    number = {3},
    pages = {521--586}
}

\bib{IY}{article}{
   author={Iyama, O.},
   author={Yoshino, Y.},
   title={Mutation in triangulated categories and rigid Cohen-Macaulay
   modules},
   journal={Invent. Math.},
   volume={172},
   date={2008},
   number={1},
   pages={117--168},
}

\bib{JJ}{article}{
   author={Jacobsen, K.\ M.},
   author={J\o rgensen, P.},
   title={$d$-abelian quotients of $(d+2)$-angulated categories},
   journal={J. Algebra},
   volume={521},
   date={2019},
   pages={114--136},
}

\bib{JJ20}{article}{
   author={Jacobsen, K.\ M.},
   author={J\o rgensen, P.},
   title={Maximal $\tau_d$-rigid pairs},
   journal={J. Algebra},
   volume={546},
   date={2020},
   pages={119--134},
}

\bib{Jasso}{article}{
   author={Jasso, G.},
   title={$n$-abelian and $n$-exact categories},
   journal={Math. Z.},
   volume={283},
   date={2016},
   number={3-4},
   pages={703--759},
}

\bib{JK}{article}{
    author = {Jasso, G.},
    author = {K\"{u}lshammer, J.},
    title = {Higher {N}akayama algebras {I}: {C}onstruction},
    note = {With an appendix by J. K\"{u}lshammer and C. Psaroudakis
              and an appendix by S. Kvamme},
    journal = {Adv. Math.},
    volume = {351},
    date = {2019},
    pages = {1139--1200},
}

\bib{Kvamme-Jasso}{article}{
   author={Jasso, G.},
   author={Kvamme, S.},
   title={An introduction to higher Auslander--Reiten theory},
   journal={Bull. Lond. Math. Soc.},
   volume={51},
   date={2019},
   number={1},
   pages={1--24},
}

\bib{J}{article}{
   author={J\o rgensen, P.},
   title={Torsion classes and $t$-structures in higher homological algebra},
   journal={Int. Math. Res. Not. IMRN},
   date={2016},
   number={13},
   pages={3880--3905},
}

\bib{J18}{article}{
   author={J\o rgensen, P.},
   title={Tropical friezes and the index in higher homological algebra},
   journal={Math. Proc. Cambridge Philos. Soc.},
   volume={171},
   date={2021},
   number={1},
   pages={23--49},
}

\bib{KR}{article}{
    author={Keller, B.},
    author={Reiten, I.},
    title={representation tilted algebras are Gorenstein and stably Calabi–Yau},
    journal={Adv. Math.},
    volume={211},
    number={1},
    date={2007},
    pages={123--151},
}

\bib{Kvamme}{article}{
    author={Kvamme, S.},
    title={Axiomatizing Subcategories of Abelian Categories},
    journal={to appear in J. Pure Appl. Algebra},
}

\bib{LP}{article}{
    author={Lekili, Y.},
    author={Polishchuk, A.},
    title={Derived equivalences of gentle algebras via Fukaya categories},
    journal={Math. Ann. 376},
    pages={187–225},
    year={2020},
}

\bib{M}{article}{
   author={Mizuno, Y.},
   title={A Gabriel-type theorem for cluster tilting},
   journal={Proc. Lond. Math. Soc. (3)},
   volume={108},
   date={2014},
   number={4},
   pages={836--868},
}

\bib{OPS}{article}{
    author={Opper, S.},
    author={Plamondon, P.-G.},
    author={Schroll, S.},
    title={A geometric model for the derived category of gentle algebras},
    journal={arXiv:1801.09659},
    year={2018},
}

\bib{OppermannPS}{article}{
    author={Oppermann, S.},
    author={Psaroudakis, C.},
    author={Stai, T.},
    title={Partial Serre duality and cocompact objects},
    journal={arXiv:2104.12498},
    year={2021},
}

\bib{OT}{article}{
   author={Oppermann, S.},
   author={Thomas, H.},
   title={Higher-dimensional cluster combinatorics and representation
   theory},
   journal={J. Eur. Math. Soc. (JEMS)},
   volume={14},
   date={2012},
   number={6},
   pages={1679--1737},
}

\bib{PPP}{article}{
    author={Palu, Y.},
    author={Pilaud, V.}, 
    author={Plamondon, P.-G.},
    title={Non-kissing and non-crossing complexes for locally gentle algebras},
    journal={J. Comb. Alg.},
    year={2019},
    volume={3},
    number={4}, 
    pages={401-438},
}

\bib{R}{article}{
   author={Reid, J.},
   title={Indecomposable objects determined by their index in higher
   homological algebra},
   journal={Proc. Amer. Math. Soc.},
   volume={148},
   date={2020},
   number={6},
   pages={2331--2343},
}

\bib{RVB}{article}{
   author={Reiten, I.},
   author={Van den Bergh, M.},
   title={Noetherian hereditary abelian categories satisfying Serre duality},
   journal={J. Amer. Math. Soc.},
   volume={15},
   date={2002},
   number={2},
   pages={295--366},
}

\bib{ST}{article}{
    author={Sand\o y, M.\ H.},
    author={Thibault, L.-P.},
    title={Classification results for $n$-hereditary monomial algebras},
    journal={arXiv:2101.12746},
    year={2021},
}

\bib{Schroll15}{article}{
    author = {Schroll, S.},
    title = {Trivial extensions of gentle algebras and {B}rauer graph algebras},
    journal = {J. Algebra},
    volume = {444},
    year = {2015},
    pages = {183--200},
}

\bib{Schroll18}{article}{
    author = {Schroll, S.},
    title = {Brauer graph algebras: a survey on {B}rauer graph algebras, associated gentle algebras and their connections to cluster theory},
    booktitle = {Homological methods, representation theory, and cluster algebras},
    series = {CRM Short Courses},
    pages = {177--223},
    publisher = {Springer, Cham},
    year = {2018},
}

\bib{Schroer}{article}{
   author={Schr\"{o}er, J.},
   title={Modules without self-extensions over gentle algebras},
   journal={J. Algebra},
   volume={216},
   date={1999},
   number={1},
   pages={178--189},
}

\bib{Vaso}{article}{
   author={Vaso, L.},
   title={$n$-cluster tilting subcategories of representation-directed
   algebras},
   journal={J. Pure Appl. Algebra},
   volume={223},
   date={2019},
   number={5},
   pages={2101--2122},
}

\bib{Vaso21}{article}{
    author={Vaso, L.},
    title={$n$-cluster tilting subcategories for radical square zero algebras},
    journal={arXiv:2105.05830},
    year={2021},
}

\bib{W}{article}{
    author={Williams, N.\ J.},
    title={New interpretations of the higher Stasheff–Tamari orders},
    journal={arXiv:2007.12664},
    year={2020},
}
\end{biblist}
\end{bibdiv}

\end{document}